\definecolor{refkey}{gray}{.75}
\definecolor{labelkey}{gray}{.2}
\newtheorem{theorem}{Theorem}[section]
\newtheorem{proposition}[theorem]{Proposition}
\newtheorem{lemma}[theorem]{Lemma}
\newtheorem{corollary}[theorem]{Corollary}
\newtheorem{question}[theorem]{Question}
\theoremstyle{definition}
\newtheorem{example}[theorem]{Example}
\theoremstyle{remark}
\newtheorem*{remark}{Remark}
\DeclareMathOperator{\im}{im}
\newcommand{\overbar}[1]{\mkern 1.5mu\overline{\mkern-1.5mu#1\mkern-1.5mu}\mkern 1.5mu}
\newcommand{\G}{\mathcal{G}}
\newcommand{\Hom}{\mathrm{Hom}}
\newcommand{\EE}{\mathbb{E}}
\newcommand{\RR}{\mathbb{R}}
\newcommand{\FF}{\mathcal{F}}
\newcommand{\C}{\mathcal{C}}
\newcommand{\e}{\varepsilon}
\newcommand{\Sol}{\mathrm{Sol}}
\newcommand{\ft}{\widehat}
\newcommand{\conj}{\overline}
\newcommand{\sub}{\mathrm{sub}}
\newcommand{\mockalph}[1]{}
\newcommand{\one}{\mathbf{1}}
\tikzstyle{p}+=[fill=black, circle, minimum width = 1pt, inner sep =
\tikzstyle{w}+=[fill=white, draw, circle, minimum width = 1pt, inner sep =
\title{On norming systems of linear equations}
\author{
Seokjoon Cho\thanks{Department of Mathematical Sciences, 
		Seoul National University, South Korea. 
		Email: {\tt znthlz@snu.ac.kr}.
}
\and
David Conlon\thanks{Department of Mathematics, California Institute of Technology, United States. Email: {\tt dconlon@caltech.edu}.
}
\and
Joonkyung Lee\thanks{Department of Mathematics, Yonsei University,  South Korea. Email: {\tt joonkyunglee@yonsei.ac.kr}.
}
\and
Jozef Skokan\thanks{Department of Mathematics, London School of Economics and Political Science, United Kingdom. Email: {\tt j.skokan@lse.ac.uk}.
}
\and
Leo Versteegen\thanks{Department of Mathematics, London School Economics and Political Science, United Kingdom. Email: {\tt lversteegen.math@gmail.com}.
}
}
\date{}
\begin{document}
\maketitle

\begin{abstract}
A system of linear equations $L$ is said to be norming if a natural functional $t_L(\cdot)$ giving a weighted count for the set of solutions to the system can be used to define a norm on the space of real-valued functions on $\mathbb{F}_q^n$ for every $n>0$. For example, Gowers uniformity norms arise in this way. In this paper, we initiate the systematic study of norming linear systems by proving a range of necessary and sufficient conditions for a system to be norming. Some highlights include an isomorphism theorem for the functional $t_L(\cdot)$, a proof that any norming system must be variable-transitive and the classification of all norming systems of rank at most two.
\end{abstract}

\section{Introduction}

One of the key tools in modern additive combinatorics, introduced by Gowers~\cite{G01} in his seminal work on Szemer\'edi's theorem, are the Gowers uniformity norms $\|\cdot\|_{U_k}$. For example, for an abelian group $G$ and a function $f : G \rightarrow \mathbb{R}$, the $U_2$-norm of $f$ is given by
\[\|f\|_{U_2}^4 = \EE\left[f(x_1) f(x_2) f(x_3) f(x_4)\right],\]
where the expectation is taken over all solutions to the linear equation $x_1-x_2 +x_3-x_4=0$. That this functional and its generalisations are indeed norms follows from first proving a certain H\"older-type inequality using iterated applications of the Cauchy--Schwarz inequality and this in turn implies the required triangle inequality. 

Our concern in this paper will be with the question of deciding which other systems of linear equations give rise to norms. Although many of our results do extend to other abelian groups, we will work throughout with groups $G$ of the form $\mathbb{F}_q^n$ where $q$ is a prime power and $n$ is a parameter that we often regard as tending to infinity. In this context, an \emph{$m\times k$ linear system on $G$} is a matrix $L\in \mathbb{F}_q^{m\times k}$ with linearly independent rows, though we use the term also for the corresponding system of linear equations. In line with this correspondence, by an \emph{equation} in $L$, we mean a vector in the row space of $L$, with, for instance, the $i$th row $(L_{i1}, \dots, L_{ik})$ of $L$ corresponding to the equation $L_{i1}x_1+\dots+L_{ik}x_k=0$.

Given a linear system $L$, we write $\Sol(L)=\{x\in G^k:Lx=0\}$ for the solution set of $L$ in $G^k$, i.e., those  $(x_1,\ldots,x_k)\in G^k$ which satisfy $L_{i1}x_1+\cdots+L_{ik}x_k=0$ for all $i\in [m]$. 
Then, for real-valued functions $f_1, \dots , f_k$ on $G$, we set
\begin{align*}
    t_L(f_1,\dots,f_k) 
    := \EE_{(x_1,\dots,x_k)\in\Sol(L)} f_1(x_1)\cdots f_k(x_k),
\end{align*}
where, here and throughout, $\EE_{(x_1,\dots,x_k)\in\Sol(L)}$ is a shorthand for $\frac{1}{|\Sol(L)|}\sum_{(x_1,\dots,x_k)\in\Sol(L)}$.
 
If we now set $t_L(f):=t_L(f,\dots,f)$ and $\|f\|_L:=|t_L(f)|^{1/k}$, we can say that 
$L$ is \emph{(semi-)norming} if $\|\cdot\|_L$ defines a (semi-)norm on the space of real-valued functions on $\mathbb{F}_q^n$ for all $n$. 
As it is easy to see that $\|cf\|_L = |c|\|f\|_L$, a linear system $L$ is semi-norming if and only if $\|\cdot\|_L$ satisfies the triangle inequality. We also say that $L$ is \emph{weakly (semi-)norming} if $\|f\|_{r(L)}:=t_L(|f|)^{1/k}$ defines a (semi-)norm.  
It will turn out that only certain degenerate systems are (weakly) semi-norming rather than norming, so we will largely ignore the distinction. As such, our main question can be stated in the following more concrete terms. 

\begin{question} \label{que:main}
Which linear systems $L$ are norming or weakly norming?
\end{question}

There is now a considerable body of work studying the analogue of \Cref{que:main} for graphs. In this setting, the question of classification was first raised by Lov\'asz~\cite{L08} and then studied in some depth by Hatami~\cite{H10}, who proved some necessary conditions, namely, that any weakly norming graph must be bipartite, balanced and biregular. He also showed that several families of graphs, including hypercubes, are weakly norming, a result which was greatly extended by Conlon and Lee~\cite{CL16}, who showed that every finite reflection group gives rise to a family of weakly norming graphs. They also conjectured a converse saying that all weakly norming graphs should arise from finite reflection groups in this way, though this remains wide open, with the best result towards it being a result of Sidorenko~\cite{Sid20} saying that weakly norming graphs must be edge-transitive. 

Here we will prove a collection of necessary and sufficient conditions for a linear system to be norming or weakly norming, many of which run parallel to those in the graph setting, but often requiring significantly different techniques to prove. 
For instance, we prove an analogue of Sidorenko's edge-transitivity result saying that any weakly norming linear system is variable-transitive, in the sense that deleting any given variable yields an isomorphic subsystem. The proof of this requires an isomorphism theorem for the functional $t_L(\cdot)$, saying that if $t_L(f) = t_M(f)$ for all non-negative real-valued functions $f$, then the systems $L$ and $M$ are isomorphic. For graphs, the analogous statement, due to Lov\'asz~\cite{Lov67}, is reasonably straightforward, while the arithmetic version requires significantly more work. On the other hand, the additional power given to us by the ability to apply Fourier methods means that classification, at least for low rank systems, comes within reach. In particular, we are able to classify all weakly norming systems of rank at most two. 

Since we prove many results across several sections, we spell out the main contributions of each section below:

\begin{itemize}

\item
In Section~\ref{sec:prelim}, we set out some preliminaries, first showing how the condition that $L$ be (weakly) semi-norming can be rephrased in terms of satisfying a H\"older-type inequality. We then introduce some basic discrete Fourier analysis and rephrase this H\"older-type inequality in Fourier terms. We also define and note some basic properties of
subsystems of linear systems.

\item
In Section~\ref{sec:necessary-conditions}, we establish some basic necessary conditions for a linear system to be weakly norming. For instance, we show that any weakly norming system must be translation invariant, so, in particular, each equation must have all coefficients summing to zero. We also show that any weakly norming system satisfies the arithmetic analogue of Sidorenko's conjecture, an important conjecture in graph theory (see, for example,~\cite{CFS10, CKLL15, CL20, Sz15} on the graph case and \cite{Alt23, FPZ19, KLM23} on the arithmetic case). It is also here that we say more about the distinction between (weakly) norming and semi-norming systems and why we can, for the most part, ignore the distinction.

\item
In Section~\ref{sec:iso}, we prove the aforementioned arithmetic analogue of the isomorphism theorem from graph theory and use it to prove that any weakly norming system must be variable-transitive. The main idea of the proof of the isomorphism theorem is to show that the condition that $t_L(f) = t_M(f)$ for all non-negative real-valued functions $f$ lifts to a similar conclusion where $f$ ranges over all complex-valued functions and then an appropriate choice of $f$ allows us to derive the required conclusion. Once this is in place, we may follow Sidorenko's technique to establish variable-transitivity.

\item
In Section~\ref{sec:rank2}, we classify all weakly norming linear systems of rank at most two. For single equations, any such equation must be Schatten, meaning that it is an equation of the form $a_1 x_1 + \dots + a_k x_k = 0$ with all $a_i \in \{a, -a\}$ and $\vert\{i\in [k]:a_i=a\}\vert=\vert\{i\in [k]:a_i=-a\}\vert$. For rank two systems, we need an additional operation, called subdivision, which takes a (weakly) norming system of linear equations and produces another (weakly) norming system with twice as many variables. The main result of this section then says that any rank two weakly norming linear system arises through subdividing some basic examples. 

\item
In Section~\ref{sec:forcing}, we show that weakly norming linear systems are forcing. The Sidorenko property, established for weakly norming linear systems in Section~\ref{sec:necessary-conditions}, says that, of all functions $f$ with a given density, the constant function is the one which minimises $t_L(f)$. The stronger forcing property, whose analogue for graphs is again well-studied, says that the constant function is also the \emph{unique} function which minimises $t_L(f)$. 

\item
In Section~\ref{sec:Cayley}, we show how each (weakly) norming hypergraph gives rise to a (weakly) norming linear system. This is a key source of examples for us, though the examples are given through a natural parametrisation of the solution set of a set of linear equations rather than in terms of the linear equations themselves. For systems arising from graphs, we show that a simple technique can be used to find the system of equations itself, though we also explain why this technique does not straightforwardly extend to systems arising from hypergraphs.

\item
In Section~\ref{sec:complex}, we prove the arithmetic analogue of an important result of Lee and Sidorenko \cite{LS21} saying that any norming graph is also norming for complex-valued functions. In order for this to work, one must take the conjugate of some of the terms in the expression for $t_L(f)$ and the key difficulty in the proof, which closely follows that in~\cite{LS21}, is in showing that there is some such assignment which works. 

\item
Finally, in Section~\ref{sec:conc}, we conclude with some brief further remarks on open problems.

\end{itemize}

\section{Preliminaries} \label{sec:prelim}

In this section, we collect some results which will be needed throughout the paper. 
We first discuss some reformulations of the norming property, beginning with the proposition below, an arithmetic analogue of a result of Hatami~\cite{H10} on graph norms, which was itself inspired by Gowers' proof~\cite{G01} that the uniformity norms are indeed norming. 
It says that the triangle inequality condition needed for a system to be norming is equivalent to a certain H\"older-type condition.

For the proof, we will need some notation. Given functions $f,g:\mathbb{F}_q^n\rightarrow \mathbb{C}$, their \emph{tensor product} $f\otimes g$ is the function on $\mathbb{F}_q^n\times \mathbb{F}_q^n$ defined by
\begin{align*}
    f\otimes g (y,z) = f(y)g(z).
\end{align*}
Using the natural isomorphism between  $(\mathbb{F}_q^n)^2$ and $\mathbb{F}_q^{2n}$, $f\otimes g $ can be seen as a function on $\mathbb{F}_q^{2n}$. Moreover, for $f_i,g_i:\mathbb{F}_q^n\rightarrow \mathbb{C}$,
\begin{align*}
    t_L (f_1\otimes g_1, f_2\otimes g_2,\dots,f_k\otimes g_k) = t_L (f_1,\dots,f_k)t_L(g_1,\dots,g_k).
\end{align*}
To see this, let $(x_1,\dots,x_k)\in (\mathbb{F}_q^{2n})^k$ and write each $x_i =(y_i,z_i)$ with $y_i,z_i\in \mathbb{F}_q^n$, so that $f\otimes g(x_i)=f(y_i)g(z_i)$. Then, since $(x_1,\dots,x_k)$ satisfies $L(x_1,\dots,x_k)=0$ if and only if $L(y_1,\dots,y_k)=L(z_1,\dots,z_k)=0$, we have 
\begin{align*}
    t_L (f_1\otimes g_1, f_2\otimes g_2,\dots,f_k\otimes g_k) &= \EE_{(x_1,\dots,x_k)\in \Sol(L)} f_1\otimes g_1(x_1)\cdots f_k\otimes g_k (x_k) \\
    &= \EE_{(y_1,\dots,y_k)\in \Sol(L)}  \EE_{(z_1,\dots,z_k)\in \Sol(L)} f_1(y_1) g_1(z_1)\cdots f_k(y_k)g_k (z_k)\\
    &=t_L (f_1,\dots,f_k)t_L(g_1,\dots,g_k).
\end{align*}
For a positive integer $m$, the $m$-th tensor power $f^{\otimes m}:(\mathbb{F}_q^n)^m \rightarrow \mathbb{C}$ is the function given by
\begin{align*}
    f^{\otimes m} = \underbrace{f\otimes f \otimes \cdots\otimes f}_{m \text{ times}}.
\end{align*}
In particular, by iterating the multiplicativity of tensor products, we obtain 
\begin{align*}
    t_L(f_1^{\otimes m}, \dots , f_k^{\otimes m}) =t_L(f_1,f_2,\dots,f_k)^m.
\end{align*}

\begin{proposition}\label{Holder}
A linear system $L$ in $k$ variables is semi-norming if and only if the inequality 
\begin{align}\label{eq:rainbow}
    |t_L(f_1,\dots,f_k)|\leq \prod_{i=1}^{k}\|f_i\|_L
\end{align}
holds for all $n \in \mathbb{N}$ and all $f_1,\dots,f_k:\mathbb{F}_q^n\rightarrow\mathbb{R}$.
\end{proposition}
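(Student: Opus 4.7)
\bigskip\noindent
\textbf{Proof proposal.} The plan is to treat the two directions separately. The backward direction follows routinely from multilinear expansion, while the forward direction uses a weighted tensor-power trick.

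For the backward direction, assuming the H\"older-type inequality \eqref{eq:rainbow}, I would expand $t_L(f+g,\dots,f+g)$ by multilinearity into $2^k$ summands indexed by subsets $S\subseteq[k]$ (with $g$ in position $i$ precisely when $i\in S$). Applying the triangle inequality for absolute values, the H\"older bound to each summand, and the binomial theorem in succession yields $|t_L(f+g,\dots,f+g)|\le(\|f\|_L+\|g\|_L)^k$, and taking $k$-th roots gives the triangle inequality.

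For the forward direction, I would assume that $L$ is semi-norming and first treat the case where $\|f_i\|_L>0$ for every $i$. For an even positive integer $m$, the key idea is to consider the auxiliary function
\[
F := \sum_{i=1}^{k}\|f_i\|_L^{-m}\,f_i^{\otimes m}
\]
on $\mathbb{F}_q^{nm}$. The multiplicativity of $t_L$ under tensor products gives $\|f_i^{\otimes m}\|_L=\|f_i\|_L^m$, so each summand in $F$ has $L$-norm equal to $1$, and iterating the triangle inequality yields $\|F\|_L\le k$. On the other hand, expanding $t_L(F,\dots,F)$ by multilinearity and again using multiplicativity gives
\[
t_L(F,\dots,F)=\sum_{(i_1,\dots,i_k)\in[k]^k}\Bigl(\prod_{j=1}^{k}\|f_{i_j}\|_L^{-m}\Bigr)\,t_L(f_{i_1},\dots,f_{i_k})^m.
\]
Because $m$ is even, every summand on the right is non-negative, so the full sum is at least the single summand indexed by $(1,\dots,k)$. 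Combining this lower bound with the upper bound $\|F\|_L^k\le k^k$ and rearranging gives $|t_L(f_1,\dots,f_k)|\le k^{k/m}\prod_j \|f_j\|_L$; letting $m\to\infty$ along even integers then yields the H\"older-type inequality. The boundary case where some $\|f_j\|_L$ vanishes is handled by perturbing $f_j$ to $f_j+\varepsilon g$ for some $g$ with $\|g\|_L>0$, applying the just-proved H\"older bound, and letting $\varepsilon\to 0$ using continuity of $t_L$.

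The main subtlety of the argument is the parity constraint on $m$: choosing $m$ even ensures that every summand in the expansion of $t_L(F,\dots,F)$ is non-negative, so that the sum may be lower-bounded by a single term without sign cancellations. The weights $\|f_i\|_L^{-m}$ are chosen so that the triangle-inequality bound on $\|F\|_L$ becomes the dimensionless constant $k$, which is then washed out by the factor $k^{k/m}\to 1$.
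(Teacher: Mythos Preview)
Your proof is correct and follows essentially the same tensor-power strategy as the paper. The only cosmetic difference is packaging: the paper first renormalises so that $\|f_i\|_L\le 1$ and $|t_L(f_1,\dots,f_k)|=c>1$, considers the unweighted sum $\sum_i f_i^{\otimes m}$, and derives a contradiction to the triangle inequality for large even $m$; you instead bake the normalisation into the weights $\|f_i\|_L^{-m}$, obtain the inequality with the extraneous factor $k^{k/m}$, and pass to the limit. Both arguments hinge on the same two points---multiplicativity of $t_L$ under tensor products and the non-negativity of all cross terms when $m$ is even---so there is no substantive divergence.
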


\begin{proof}
Suppose the functional $\|\cdot\|_L$ is a seminorm on the space of
real-valued functions on $\mathbb{F}_q^n$ for all  $n\in\mathbb{N}$ and the inequality \eqref{eq:rainbow} does not hold for some functions $f_1, \dots , f_k$ on $G = \mathbb{F}_q^n$, i.e., $|t_L(f_1,f_2,\dots,f_k)|> \prod_{i=1}^{k}\|f_i\|_L$. By renormalising, we may assume that $|t_L(f_1,f_2,\dots,f_k)|=c>1$ and $\|f_i\|_L\leq 1$ for each $i=1,\dots,k$. Then, for any even integer $m$,
\begin{align*}
    \|f_1^{\otimes m}+\cdots + f_k^{\otimes m}\|^k_L &= |t_L(f_1^{\otimes m}+\cdots + f_k^{\otimes m})|= \left|\sum_{1\leq i_1, \dots, i_k \leq k} t_L(f_{i_1}^{\otimes m},\dots, f_{i_k}^{\otimes m})\right|\\
    &=\sum_{1\leq i_1, \dots, i_k \leq k} t_L(f_{i_1},\dots,f_{i_k})^m\geq t_L(f_1,\dots,f_k)^m = c^m.
\end{align*}
On the other hand, $\|f_1^{\otimes m}\|_L+\cdots+\|f_k^{\otimes m}\|_L=\|f_1\|_L^m +\cdots +\|f_k\|_L^m \leq k$. Hence, if $m$ is sufficiently large, we have 
\begin{align*}
    \|f_1^{\otimes m}\|_L+\cdots+\|f_k^{\otimes m}\|_L \leq k< c^{m/k} \leq \|f_1^{\otimes m}+\cdots + f_k^{\otimes m}\|_L,
\end{align*}
which contradicts the triangle inequality for $\|\cdot \|_L$ on $\mathbb{F}_q^{nm}$.

Conversely, suppose that the inequality \eqref{eq:rainbow} holds. Then, for each $f_1$ and~$f_2$,
\begin{align*}
    \|f_1 +f_2\|_L^k &= |t_L(f_1+f_2)|\leq \sum_{1 \le i_1,\dots,i_k \le 2}|t_L(f_{i_1},\dots,f_{i_k})|\\
    &\leq \sum_{1 \le i_1,\dots,i_k \le 2} \|f_{i_1}\|_L \cdots \|f_{i_k}\|_L
    =(\|f_1\|_L+\|f_2\|_L)^k.
\end{align*}
Thus, $\|\cdot \|_L$ satisfies the triangle inequality.
\end{proof}

A similar argument also works in the weakly norming case, yielding the following result, whose proof we omit.

\begin{proposition}\label{weakly_Holder}
A linear system $L$ is weakly semi-norming if and only if the inequality
\begin{align*} 
    t_L(f_1,\dots,f_k)\leq \prod_{i=1}^{k}\|f_i\|_L =  \prod_{i=1}^{k}\|f_i\|_{r(L)}
\end{align*}
holds for all $n \in \mathbb{N}$ and all non-negative $f_1,\dots,f_k:\mathbb{F}_q^n\rightarrow\mathbb{R}_{\geq 0}$.
\end{proposition}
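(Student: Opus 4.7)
The strategy is to mirror the proof of \Cref{Holder} with modifications for the weakly semi-norming setting, where one restricts to non-negative functions and works with $\|f\|_{r(L)} = t_L(|f|)^{1/k}$. Because all inputs are non-negative, $t_L$ itself is non-negative (and in fact monotone), so the absolute values on $t_L$ values that appeared in \Cref{Holder} are no longer needed.

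For the forward direction, I would assume $L$ is weakly semi-norming and, for contradiction, that the inequality fails for some non-negative $f_1,\dots,f_k$ on $\mathbb{F}_q^n$. After renormalising so that $\|f_i\|_{r(L)} \leq 1$ and $t_L(f_1,\dots,f_k) = c > 1$, I would pass to the tensor powers $f_i^{\otimes m}$, which remain non-negative. Expanding by multilinearity and using $t_L(f_{i_1}^{\otimes m},\dots,f_{i_k}^{\otimes m}) = t_L(f_{i_1},\dots,f_{i_k})^m$ gives
\[\|f_1^{\otimes m} + \cdots + f_k^{\otimes m}\|_{r(L)}^k = t_L\bigl(f_1^{\otimes m} + \cdots + f_k^{\otimes m}\bigr) = \sum_{1 \le i_1,\dots,i_k \le k} t_L(f_{i_1},\dots,f_{i_k})^m \ge c^m,\]
where every summand is non-negative because every $f_i$ is. On the other hand, $\|f_i^{\otimes m}\|_{r(L)} = \|f_i\|_{r(L)}^m \leq 1$, so the sum of the individual norms is at most $k$. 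For $m$ large enough, $k < c^{m/k}$, contradicting the triangle inequality for $\|\cdot\|_{r(L)}$ on $\mathbb{F}_q^{nm}$.

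For the converse, I would first observe that if $f,h \geq 0$ pointwise, then multilinear expansion of $t_L(f+h)$ shows $t_L(f+h) \geq t_L(f)$, so $t_L$ is monotone on non-negative functions. Hence, for arbitrary real-valued $f,g$, the pointwise bound $|f+g| \leq |f|+|g|$ yields $t_L(|f+g|) \leq t_L(|f|+|g|)$. Expanding,
\[t_L(|f|+|g|) = \sum_{1 \le i_1,\dots,i_k \le 2} t_L(h_{i_1},\dots,h_{i_k}),\]
with $h_1 = |f|$, $h_2 = |g|$, and then applying the hypothesised Hölder-type inequality to each (non-negative) summand gives $t_L(|f|+|g|) \leq (\|f\|_{r(L)} + \|g\|_{r(L)})^k$. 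Taking $k$-th roots yields the triangle inequality.

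The only substantive new point relative to \Cref{Holder} is the converse step of passing from the non-negative Hölder-type inequality to the triangle inequality on all real-valued functions, which is where the monotonicity of $t_L$ on the non-negative cone is used in an essential way.
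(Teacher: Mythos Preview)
Your proof is correct and is precisely the adaptation of the proof of \Cref{Holder} that the paper has in mind; indeed, the paper omits the proof entirely, saying only that ``a similar argument also works in the weakly norming case.'' Your observation that one must invoke the monotonicity of $t_L$ on the non-negative cone (to pass from $|f+g|\leq |f|+|g|$ to $t_L(|f+g|)\leq t_L(|f|+|g|)$) is exactly the extra wrinkle needed in the converse direction, and you handle it correctly.
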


We will make considerable use of discrete Fourier analysis on the finite abelian groups $\mathbb{F}_q^n$. We give a brief refresher. Setting $G = \mathbb{F}_q^n$, we write $\ft{G}$ for the dual group of $G$, i.e., the group of homomorphisms from $G$ to the multiplicative group $\mathbb{C}$ of complex numbers, which is easily seen to be isomorphic to $G$ itself. 

 Denoting the inner product of $\xi, x\in G$ by $\xi^Tx$, the Fourier transform $\ft{f}:\ft{G}\rightarrow\mathbb{C}$ of a function $f:G\rightarrow\mathbb{C}$ is defined by 
 $$\ft{f}(\xi):=\EE_{x\in G}f(x)e(-\xi^T x),$$ 
 where $e(y) := e^{2\pi i y/q}$, and we have the Fourier inversion formula $$f(x)=\sum_{\xi \in \ft{G}}\ft{f}(\xi)e(\xi^T x).$$ 
Using these formulas, it is easily checked that $f$ is real-valued if and only if $\ft{f}(\xi)$ and $\ft{f}(-\xi)$ are complex conjugates for every $\xi\in \ft{G}$.

We write $f_1*\cdots*f_k$ for the \emph{convolution} $f_1*\cdots*f_k(x):=\EE_{y_1+\cdots+y_k=x}f_1(y_1)\cdots f_k(y_k)$. 
In particular, if both $f_1$ and $f_2$ are always non-negative, then so is $f_1*f_2$. 
The importance of convolution lies in the fact that the Fourier transform $(f_1*\cdots* f_k)^{\wedge}$ of $f_1*\cdots* f_k$ satisfies $(f_1*\cdots* f_k)^{\wedge} = \ft{f_1}\cdots\ft{f_k}$.

Let $L$ be a $1\times k$ linear system such that no entry of $L$ is zero. For $g_j(y):=f_j(L_{1j}^{-1} y)$, we have
\begin{align*}
    t_L (f_1,\dots,f_k) = \EE_{\Sol(L)} f_1(x_1)\cdots f_k(x_k) 
    = g_1 *g_2 *\cdots *g_k (0) = \sum_{\xi\in\ft{G}}\prod_{j=1}^k \ft{g_j}(\xi)
    =\sum_{\xi\in\ft{G}}\prod_{j=1}^k \ft{f_j}(L_{1j}\xi).
\end{align*}

We will also make use of the analogous Fourier inversion formula for linear systems with more than one equation.

\begin{proposition}\label{prop:fourier}
Let $L\in \mathbb{F}_q^{m\times k}$ be a linear system with coefficients $(L_{ij})_{i\in [m],j\in [k]}$. Then 
\begin{align}\label{eq:inversion}
    t_L(f_1, \dots, f_k) = \sum_{(\xi_1, \dots, \xi_m) \in \ft{G}^m} \prod_{j=1}^{k}\ft{f_j}\left(\sum_{i=1}^m L_{ij}\xi_i\right)
\end{align}
for any $f_1, \dots, f_k:\mathbb{F}_q^n\rightarrow\mathbb{R}$.
\end{proposition}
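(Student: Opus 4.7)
The plan is to start from the definition of $t_L$, apply Fourier inversion to each factor $f_j$, and then evaluate the resulting character sum over $\Sol(L)$ using orthogonality.

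\textbf{Step 1: Expand via Fourier inversion.} Using $f_j(x_j) = \sum_{\eta_j \in \ft{G}} \ft{f_j}(\eta_j) e(\eta_j^T x_j)$ and swapping the finite sums with the expectation, I would write
\begin{align*}
t_L(f_1,\dots,f_k) &= \EE_{(x_1,\dots,x_k)\in\Sol(L)} \prod_{j=1}^{k} \sum_{\eta_j \in \ft{G}} \ft{f_j}(\eta_j)\, e(\eta_j^T x_j) \\
&= \sum_{(\eta_1,\dots,\eta_k) \in \ft{G}^k} \left(\prod_{j=1}^{k} \ft{f_j}(\eta_j)\right) \EE_{(x_1,\dots,x_k)\in\Sol(L)} e\!\left(\sum_{j=1}^{k} \eta_j^T x_j\right).
\end{align*}

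\textbf{Step 2: Identify the characters trivial on $\Sol(L)$.} The inner expectation is the average of a character of the subgroup $\Sol(L)\le G^k$, so it equals $1$ if the character is trivial on $\Sol(L)$ and $0$ otherwise. The map $L:G^k\to G^m$ defined by the matrix $L$ (with $\mathbb{F}_q$-scalars acting on $G$) has $\Sol(L)=\ker L$, and by the standard duality for finite abelian groups the annihilator of $\ker L$ in $\ft{G}^k$ is the image of the transpose map $L^T:\ft{G}^m\to \ft{G}^k$. Concretely, a character $(x_1,\dots,x_k)\mapsto e(\sum_j \eta_j^T x_j)$ is trivial on $\Sol(L)$ if and only if there exists $(\xi_1,\dots,\xi_m)\in \ft{G}^m$ such that $\eta_j = \sum_{i=1}^m L_{ij}\xi_i$ for all $j\in[k]$.

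\textbf{Step 3: Use linear independence of the rows.} Since $L$ has linearly independent rows, the map $L^T:\ft{G}^m\to\ft{G}^k$ is injective, so each $(\eta_1,\dots,\eta_k)$ in its image is the image of a \emph{unique} tuple $(\xi_1,\dots,\xi_m)$. Substituting back, the sum over admissible $(\eta_1,\dots,\eta_k)$ can be reindexed by $(\xi_1,\dots,\xi_m)\in\ft{G}^m$, giving
\begin{align*}
t_L(f_1,\dots,f_k) = \sum_{(\xi_1,\dots,\xi_m) \in \ft{G}^m} \prod_{j=1}^{k} \ft{f_j}\!\left(\sum_{i=1}^m L_{ij}\xi_i\right),
\end{align*}
which is \eqref{eq:inversion}. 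No real obstacle arises; the only subtle point is the bookkeeping in Step 2, where one must justify that the annihilator of $\ker L$ is exactly the row span of $L$ read off in $\ft{G}$, for which the linear independence assumption in the definition of a linear system is essential (both to make the representation unique and to ensure the change of variables in Step 3 is a bijection onto the support).
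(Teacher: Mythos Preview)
Your proof is correct. The paper argues in the reverse direction: it expands the right-hand side using the definition of the Fourier transform, swaps sums, and then uses orthogonality over each $\xi_i\in\ft{G}$ to produce the indicator $\mathbf{1}[x\in\Sol(L)]$ together with the normalising factor $|G|^m/|G|^k = 1/|\Sol(L)|$. You instead expand the left-hand side via Fourier inversion and invoke the abstract duality $(\ker L)^\perp=\im(L^T)$ to evaluate the character average over $\Sol(L)$. Both routes rest on the same character orthogonality and the linear independence of the rows (which in the paper's version is what makes $|\Sol(L)|=|G|^{k-m}$); the paper's computation is more hands-on, while yours packages Step~2 as a standard duality statement, which is arguably cleaner but requires the reader to supply that lemma.
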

\begin{proof}
    Expanding out the right-hand side of \eqref{eq:inversion}, we get
    \begin{align}\label{eq:inversion-expansion}
        \sum_{\xi \in \ft{G}^m} \prod_{j=1}^{k}\ft{f_j}\left(\sum_{i=1}^m L_{ij}\xi_i\right)
        &=\sum_{\xi\in \ft{G}^m} \prod_{j=1}^{k} \left( \EE_{x_j\in G} f_j(x_j)e\left(-\sum_{i=1}^m L_{ij}\xi_i^T x_j\right)
        \right)\nonumber\\
        &=\EE_{x\in G^k}\sum_{\xi\in \ft{G}^m} e\left(-\sum_{i=1}^m \xi_i^T \sum_{j=1}^k L_{ij} x_j\right)\prod_{j=1}^{k} f_j(x_j)\nonumber\\
        &=\frac{1}{\vert G\vert^k}\sum_{x\in G^k} \prod_{i=1}^m \left(\sum_{\xi_i\in \ft{G}} e\left(-\xi_i^T \sum_{j=1}^k L_{ij} x_j\right)\right)\prod_{j=1}^{k} f_j(x_j).
    \end{align}
    For fixed $x\in G^k$, we have
    \begin{align*}
        \sum_{\xi_i\in \ft{G}} e\left(-\xi_i^T \sum_{j=1}^k L_{ij} x_j\right)=\begin{cases}
            \vert G\vert &\text{if } \sum_{j=1}^k L_{ij} x_j=0,\\
            0 &\text{otherwise.}
        \end{cases}
    \end{align*}
    Inserting this into \eqref{eq:inversion-expansion}, we obtain
    \begin{align*}
        \sum_{\xi \in \ft{G}^m} \prod_{j=1}^{k}\ft{f_j}\left(\sum_{i=1}^m L_{ij}\xi_i\right)=\frac{\vert G\vert^m}{\vert G\vert^k} \sum_{x\in \Sol(L)} \prod_{j=1}^{k} f_j(x_j)=t_L(f_1,\ldots,f_k),
    \end{align*}
    as desired.
\end{proof}

Thus, by~\Cref{Holder}, the linear system $L$ is semi-norming if and only if 
\begin{align*}
    \left\vert \sum_{\vec{\xi} \in \ft{G}^m} \ft{f_1}\left(\sum_{i=1}^m L_{i1}\xi_i\right)\cdots\ft{f_k}\left(\sum_{i=1}^m L_{ik}\xi_i\right) \right\vert^k \leq 
    \prod_{j=1}^k \left\vert \sum_{\vec{\xi} \in \ft{G}^m} \ft{f_j}\left(\sum_{i=1}^m L_{i1}\xi_i\right)\cdots \ft{f_j}\left(\sum_{i=1}^m L_{ik}\xi_i\right)    \right\vert
\end{align*}
or, equivalently, writing $L^t_j$ for the $j^\text{th}$ column of $L$ as a row vector, 
\begin{align}\label{eq:fourier_holder}
    \left\vert \sum_{\vec{\xi} \in \ft{G}^m} \ft{f_1}\left(L_1^t\vec{\xi}\right)\cdots\ft{f_k}\left(L_k^t\vec{\xi}\right) \right\vert^k \leq 
    \prod_{j=1}^k \left\vert \sum_{\vec{\xi} \in \ft{G}^m} \ft{f_j}\left(L_1^t\vec{\xi}\right)\cdots \ft{f_j}\left(L_k^t\vec{\xi}\right)    \right\vert
\end{align}
for any $f_1, \dots, f_k:\mathbb{F}_q^n\rightarrow\mathbb{R}$.

\begin{example}\label{ex:K23}
    Let $L$ be the $2\times 6$ system defined by the matrix
    \begin{align*}
        \begin{pmatrix*}[r]
            1 & -1 & -1 & 1 & 0 & 0 \\
            0 & 0 & 1 & -1 & -1 & 1 \\
        \end{pmatrix*}.
    \end{align*}
    In other words, $\Sol(L)$ consists of solutions to the system of linear equations $x_1-x_2=x_3-x_4=x_5-x_6$. Then, by~\eqref{eq:inversion},
    \begin{align*}
        t_L(f_1,\ldots,f_6) = \sum_{(\xi_1, \xi_2) \in \ft{G}^2} \ft{f_1}(\xi_1)\ft{f_2}(-\xi_1)\ft{f_3}(-\xi_1+\xi_2)\ft{f_4}(\xi_1-\xi_2)\ft{f_5}(-\xi_2)\ft{f_6}(\xi_2).
    \end{align*}
    In particular, 
    \begin{align*}
        t_L(f) = \sum_{(\xi_1, \xi_2) \in \ft{G}^2} |\ft{f}(\xi_1)|^2|\ft{f}(\xi_1-\xi_2)|^2|\ft{f}(\xi_2)|^2 .
    \end{align*}
    We will see that this system $L$ is weakly norming in~\Cref{ex:theta}.
\end{example}

\medskip

We now define subsystems of linear systems, which bear some resemblance to subgraphs of graphs.
Let $L$ be an $m\times k$ system, let $i\in [k]$ and let $a^{(1)},\ldots,a^{(r)}$ be a basis for those vectors in the row space of $L$ for which the $i^\text{th}$ coordinate is zero. By \emph{deleting a variable} $x_i$, $i\in [k]$, we mean that we pass to the $r\times (k-1)$ system $L'$ with rows $\tilde{a}^{(1)},\ldots,\tilde{a}^{(r)}$ where $\tilde{a}^{(j)}$ is equal to $a^{(j)}$ with the $i^{\text{th}}$ coordinate removed. Note that by choosing a different basis $a^{(1)},\ldots,a^{(r)}$, we might obtain a different system $L'$. However, the solution space $\Sol(L')$ is 
always the projection of $\Sol(L)$ onto the set of coordinates $[k]\setminus\{i\}$, independently 
of the choice of basis.
By \emph{deleting an equation}, we mean removing the corresponding row vector from the basis for the row space of $L$, thereby reducing the dimension of the row space.
A \emph{subsystem} $L'$ of an $m\times k$ linear system $L$ is an $m'\times k'$ system with $k'\leq k$ and $m'\leq m$, obtained by deleting variables and equations from $L$. 
If a subsystem is obtained by deleting all variables indexed by $i\notin I$, then we say that it is \emph{induced} on $I\subseteq [k]$. 

\begin{example}\label{ex:deletion}
    Let $L$ be the $4\times 5$ system defined by the matrix
    \begin{align*}
        \begin{pmatrix*}[r]
        1 &-1 & 0 &0 &0 \\
        1 & 1 & -2 &0 &0 \\
        0 & 0& -2 &1 & 1 \\
        0 & 0 &0 &1 & -1 \\
        \end{pmatrix*}.
    \end{align*}
    Then $\Sol(L)$ is 1-dimensional, spanned by $(1,1,1,1,1)$. Deleting the variable $x_3$ from $L$ yields the subsystem $L'$ induced on $I=\{1,2,4,5\}$ whose matrix can be written as
    \begin{align*}
        \begin{pmatrix*}[r]
        1 &-1 & 0 &0 \\
        0 & 0 &1 & -1 \\
        1 & 1 &-1 &-1 \\
        \end{pmatrix*},
    \end{align*}
    where the last row is obtained by deleting the third entry from $(1,1,0,-1,-1)$, a vector in the row space of $L$.
    Thus, $\Sol(L)$ is again 1-dimensional, spanned by $(1,1,1,1)$.
\end{example}

If $L'$ is an induced subsystem of $L$, we may write $t_{L'}(f)$ as $t_L(f_1,\ldots,f_k)$ for some appropriate $f_1, \dots, f_k$. For graphs, this is obvious, but we have to be a little more careful in the arithmetic setting, so we include a proof.

\begin{proposition}\label{prop:subsystem}
    Let $L'$ be an $m'\times k'$ subsystem of an $m\times k$ linear system $L$, induced on $I\subseteq [k]$. Given $f:G\rightarrow\mathbb{C}$, let $f_i=f$ for $i\in I$ and $f_j=1$ for $j\notin I$. Then $t_{L'}(f)=t_L(f_1,\ldots,f_k)$.
\end{proposition}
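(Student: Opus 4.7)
The plan is to unfold the definition of $t_L(f_1,\ldots,f_k)$ directly and exploit the fact, recalled in the text just before the statement, that $\Sol(L')$ is a coordinate projection of $\Sol(L)$. Since $f_j\equiv 1$ for every $j\notin I$, the product inside the expectation depends only on the coordinates indexed by $I$, so
\[
t_L(f_1,\ldots,f_k) \;=\; \frac{1}{|\Sol(L)|}\sum_{x\in \Sol(L)}\prod_{i\in I} f(x_i).
\]

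Next, I would consider the coordinate projection $\pi:\Sol(L)\to G^I$ sending $x\mapsto (x_i)_{i\in I}$. Iterating the single-variable characterisation of deletion given in the preamble (by deleting the variables in $[k]\setminus I$ one at a time), we get that $\pi$ surjects onto $\Sol(L')$. Because $\Sol(L)$ and $\Sol(L')$ are $\mathbb{F}_q$-vector spaces and $\pi$ is $\mathbb{F}_q$-linear and surjective, every non-empty fibre of $\pi$ has the same cardinality $|\Sol(L)|/|\Sol(L')|$. Grouping the sum by fibres therefore yields
\[
t_L(f_1,\ldots,f_k) \;=\; \frac{1}{|\Sol(L)|}\cdot \frac{|\Sol(L)|}{|\Sol(L')|}\sum_{y\in \Sol(L')}\prod_{i\in I}f(y_i) \;=\; t_{L'}(f),
\]
which is the required identity.

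The only genuine subtlety is to confirm that iterating single-variable deletion really does give the projection of $\Sol(L)$ onto the coordinates in $I$, together with the (standard) fact that this projection is surjective onto $\Sol(L')$. Both are handled by a short induction on $|[k]\setminus I|$ using the single-variable statement already in the text. As a sanity check, one can rederive the identity from \Cref{prop:fourier}: the factors with $j\notin I$ collapse to $\widehat{\mathbf{1}}=\delta_0$, restricting the sum to those $\vec{\xi}\in\hat{G}^m$ with $L_j^t\vec{\xi}=0$ for all $j\notin I$; a change of basis in $\vec{\xi}$ then recovers the Fourier expansion of $t_{L'}(f)$. The direct combinatorial route above is cleaner to present and is the one I would write up.
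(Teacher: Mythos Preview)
Your argument is correct and rests on the same core fact as the paper's proof, namely that $\Sol(L')$ is the coordinate projection of $\Sol(L)$ onto $I$. The execution differs slightly: the paper reduces inductively to a single deletion and then splits into two cases (either $x_k$ is a free variable, giving a product formula, or some equation involves $x_k$, in which case the projection is shown to be a \emph{bijection} $\Sol(L)\to\Sol(L')$), whereas you project onto all of $I$ at once and use the linear-algebra fact that all fibres of a surjective linear map have size $|\Sol(L)|/|\Sol(L')|$. Your route is a little cleaner in that it avoids the case distinction; the paper's route has the minor advantage of making explicit exactly when the projection is a bijection, which is conceptually useful even if not needed for the identity itself. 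The Fourier sanity check you mention is also valid but, as you say, less direct.
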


\begin{proof}
    We claim that if $L'$ is induced on $I=[k-1]$, then, for any complex-valued functions $f_1,\ldots,f_{k-1}$ on $G$,
    \begin{align}\label{eq:subsystem} 
        t_L(f_1,\ldots,f_{k-1},1)=t_{L'}(f_1,\ldots,f_{k-1}).
    \end{align}
    The proof then follows from repeatedly applying this claim to reduce the index set while possibly relabelling the variables. 
    
    Suppose first that there are no equations in $\Sol(L)$ that involve $x_k$. Then $x_k$ is a `free' variable, so that $t_L(f_1,\ldots,g) = t_{L'}(f_1,\ldots,f_{k-1})\EE[g]$ for every function $g$. For $g\equiv 1$, this yields~\eqref{eq:subsystem}. We may therefore assume that there is at least one equation in $\Sol(L)$ that involves $x_k$.
    
    Let $p_k:G^k\rightarrow G^{k-1}$ be the projection map $(x_1,\ldots,x_k)\mapsto(x_1,\ldots,x_{k-1})$. Then
    \begin{align}\label{eq:projection}
        \nonumber t_L(f_1,\ldots,f_{k-1},1) &=\frac{1}{|\Sol(L)|}\sum_{(x_1,\dots,x_k)\in\Sol(L)} f_1(x_1)\cdots f_{k-1}(x_{k-1})\\ &= \frac{1}{|\Sol(L)|}\sum_{(x_1,\dots,x_{k-1})\in p_k(\Sol(L))} f_1(x_1)\cdots f_{k-1}(x_{k-1})|p_k^{-1}(x_1,\ldots,x_{k-1})|,
    \end{align}
    where $p_k^{-1}(\cdot)$ denotes the inverse image of $p_k$.
    
    Since at least one equation of $L$ involves $x_k$, $p_k$ is a bijection between $\Sol(L)$ and $\Sol(L')$.  
    Indeed, under $p_k$, each $(x_1,\ldots,x_k)$ projects to $(x_1,\ldots,x_{k-1})$, which satisfies all equations in $L'$, proving that $p_k(\Sol(L))\subseteq\Sol(L')$. Conversely, let $(x_1,\ldots,x_{k-1})\in\Sol(L')$. It is enough to check that this uniquely extends to $(x_1,\ldots,x_k) \in \Sol(L)$.
    Given an equation $a_1x_1+\cdots+a_kx_k=0$ in $L$ with $a_k\neq 0$, $(x_1,\ldots,x_{k-1})$ uniquely determines 
    \begin{align*}
        x_k = -\frac{a_1}{a_k}x_1 -\cdots -\frac{a_{k-1}}{a_k}x_{k-1}.
    \end{align*}
    Moreover, this choice of $x_k$ is consistent with the one obtained by using any other equation $b_1x_1+\cdots + b_kx_k=0$ with $b_k\neq 0$ since $L'$ includes the equation 
    \begin{align*}
         \frac{a_1}{a_k}x_1 +\cdots +\frac{a_{k-1}}{a_k}x_{k-1} =  \frac{b_1}{b_k}x_1 +\cdots +\frac{b_{k-1}}{b_k}x_{k-1}.
    \end{align*}
    Therefore, $p_k(\Sol(L))=\Sol(L')$ and $p_k$ is a bijection from $\Sol(L)$ to $\Sol(L')$, which turns~\eqref{eq:projection} into the identity~\eqref{eq:subsystem}.
\end{proof}

\section{Basic necessary conditions} \label{sec:necessary-conditions}

A $k\times k$ linear system $L$ with non-zero determinant satisfies $\Sol(L)=\{0\}$, which, for our purposes, may be seen as a degenerate case. 
As such, we shall assume $\Sol(L)\neq\{0\}$ in what follows, unless we are specifically speaking about degenerate systems. 
The following proposition asserts that all other weakly semi-norming systems are translation invariant, in the sense that if $(x_1,\dots,x_k)\in \Sol(L)$ and $g\in G$, then $(x_1+g,\dots,x_k+g)\in \Sol(L)$ as well. 
In particular, this means that the coefficients of each equation in a weakly norming system must add to $0$. 

\begin{proposition}\label{prop:degeneracy}
    Every weakly semi-norming $m\times k$ system $L$ is translation invariant. In particular, for all $i\in [k]$ and $v\in G$, 
    \begin{align*}
        \vert \{(x_1,\dots,x_k)\in \Sol(L): x_i=v\}\vert = \vert \Sol(L)\vert /\vert G\vert.
    \end{align*}
\end{proposition}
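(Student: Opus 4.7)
The plan is to apply~\Cref{weakly_Holder} with the indicator of a singleton placed in one coordinate and the constant function $1$ in every other coordinate, which will force strong structure on which ``diagonal'' tuples can lie in $\Sol(L)$; the rest then reduces to elementary algebra over $\mathbb{F}_q$.

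Concretely, for $i\in[k]$ and $v\in G$, I will set $p_i(v):=|\{x\in\Sol(L):x_i=v\}|/|\Sol(L)|$ and apply~\Cref{weakly_Holder} with $f_i=\mathbf{1}_{\{v\}}$ and $f_j=1$ for $j\neq i$. Since $\|1\|_L=1$, this yields the pointwise bound
\[
p_i(v) \;=\; t_L(f_1,\dots,f_k) \;\leq\; \prod_{j=1}^{k}\|f_j\|_L \;=\; t_L(\mathbf{1}_{\{v\}})^{1/k}.
\]
The right-hand side is the probability that a uniformly random element of $\Sol(L)$ equals the constant tuple $(v,\ldots,v)$, so it vanishes whenever $(v,\ldots,v)\notin\Sol(L)$. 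Now $(v,\ldots,v)\in\Sol(L)$ is equivalent to $\bigl(\sum_{j}a_j\bigr)v=0$ for every row $a=(a_1,\ldots,a_k)$ of $L$, and because $\mathbb{F}_q$ is a field, a single row whose coefficients fail to sum to zero in $\mathbb{F}_q$ forces $v=0$. Consequently, if $L$ is \emph{not} translation invariant, then $p_i(v)=0$ for every $i$ and every $v\neq 0$, so $\Sol(L)=\{0\}$, which contradicts the standing non-degeneracy assumption. It follows that every row---and hence every vector in the row space---of $L$ has coefficients summing to zero, which is exactly translation invariance, since $\sum_j a_j(x_j+g)=\sum_j a_j x_j+\bigl(\sum_j a_j\bigr)g=0$ whenever $x\in\Sol(L)$.

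For the ``in particular'' claim, once translation invariance is in hand, the map $x\mapsto x+(g,\ldots,g)$ is a bijection of $\Sol(L)$ for every $g\in G$, so the marginal distribution $p_i$ is invariant under every translation of $G$; any translation-invariant probability measure on a finite group is uniform, giving $|\{x\in\Sol(L):x_i=v\}|=|\Sol(L)|/|G|$. The only step requiring any thought is the choice of test functions: once one tries a singleton indicator in a single coordinate against all-ones elsewhere in~\Cref{weakly_Holder}, the rest of the proof collapses to manipulations of coefficient sums in $\mathbb{F}_q$.
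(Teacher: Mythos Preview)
Your proof is correct and follows essentially the same approach as the paper: apply \Cref{weakly_Holder} with a singleton indicator in one coordinate and the constant~$1$ elsewhere, then observe that $t_L(\mathbf{1}_{\{v\}})>0$ forces $(v,\ldots,v)\in\Sol(L)$. One harmless slip: the right-hand side of your displayed inequality is the $k$-th root of the probability that a random solution equals $(v,\ldots,v)$, not the probability itself---but since you only use that it vanishes iff $(v,\ldots,v)\notin\Sol(L)$, the argument is unaffected.
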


\begin{proof}
    Let $(y_1,\dots,y_k)\in \mathbb{F}_q^k$ be a non-zero element in $\Sol(L)$ with $y_j=v\neq 0$. 
    For $f_j=\mathbf{1}_{\{v\}}$ and $f_i=1$ for all $i\neq j$, we have
    \begin{align*}
        t_L(f_1,\dots,f_k) = \frac{|\{(x_1,\dots,x_k)\in\Sol(L):x_j=v\}|}{|\Sol(L)|},
    \end{align*}
    which is strictly positive. 
    By~\Cref{weakly_Holder}, it follows that $t_L(f_j)>0$.
    That is, $\Sol(L)$ contains $(v,v,\dots,v)$ and, hence, it also contains $(1,1,\dots,1)$. Thus, $L$ must be translation invariant.
\end{proof}

By this proposition, if $L$ is a non-degenerate weakly norming system, then each $(v, v, \dots, v) \in \Sol(L)$, so we have that 
$$\|f\|_{r(L)} \ge \frac{1}{|\Sol(L)|} \sum_v |f(v)|^k > 0$$ 
for all $f \neq 0$, implying that the system is in fact weakly norming. 

Another consequence of this proposition is that $\EE[f]^k \leq t_L(f)$
whenever $L$ is an $m \times k$ weakly norming system and~$f$ is a non-negative function on $G$. 
Indeed, let $f_1=f$ and $f_2=\cdots=f_k=1$. Then
\begin{align*}
    t_L(f_1,\dots,f_k) = \EE_{(x_1,\dots,x_k)\in\Sol(L)} f(x_1) = \EE_{x\in G} f(x)
\end{align*}
and, therefore, \Cref{weakly_Holder} implies that $\EE[f]^k \leq t_L(f)$. 
As mentioned in the introduction, linear systems which satisfy this inequality for all non-negative $f$ are called \emph{Sidorenko}.

\begin{corollary}\label{Sidorenko}
    Every weakly norming system $L$ is Sidorenko.
\end{corollary}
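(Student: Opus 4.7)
The proof is essentially contained in the paragraph preceding the corollary statement; I would formalize it as follows. The plan is to apply the weakly-norming Hölder inequality from \Cref{weakly_Holder} to the tuple $(f,1,\ldots,1)$, then compute both sides.

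First, I would observe that $t_L(f,1,\ldots,1)=\EE_{x\in G}f(x)$ whenever $L$ is translation invariant. This uses \Cref{prop:degeneracy}: for any weakly semi-norming $L$, the number of solutions with $x_1=v$ is $|\Sol(L)|/|G|$, so averaging over $\Sol(L)$ and reading off only the first coordinate yields the uniform average of $f$ on $G$. This step relies crucially on $L$ being weakly norming (hence translation invariant), which is where the hypothesis enters; if we merely had $\Sol(L)\ne\{0\}$ without translation invariance, the distribution of $x_1$ over $\Sol(L)$ could be biased.

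Next, I would apply \Cref{weakly_Holder} with $f_1=f$ (non-negative) and $f_2=\cdots=f_k=\mathbf{1}$. Note that $\|\mathbf{1}\|_L^k = t_L(\mathbf{1})=1$, since each $(x_1,\ldots,x_k)\in\Sol(L)$ contributes $1$ to the average. Hölder then gives
\begin{align*}
    \EE[f] \;=\; t_L(f,1,\ldots,1) \;\le\; \|f\|_L\cdot\|\mathbf{1}\|_L^{k-1} \;=\; t_L(f)^{1/k}.
\end{align*}
Raising both sides to the $k$-th power yields $\EE[f]^k\le t_L(f)$, which is exactly the Sidorenko property.

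There is really no obstacle here: the work has been done in \Cref{prop:degeneracy} and \Cref{weakly_Holder}, and the corollary is a one-line specialization. The only subtlety worth flagging in the write-up is that the argument uses the full strength of translation invariance (to reduce $t_L(f,1,\ldots,1)$ to $\EE[f]$), which is why the conclusion is stated for weakly norming rather than merely weakly semi-norming systems.
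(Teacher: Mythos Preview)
Your proof is correct and follows exactly the paper's approach: apply \Cref{weakly_Holder} to $(f,1,\ldots,1)$, use translation invariance from \Cref{prop:degeneracy} to evaluate $t_L(f,1,\ldots,1)=\EE[f]$, and note $\|\mathbf{1}\|_L=1$. One small remark: since \Cref{prop:degeneracy} already gives translation invariance for all (non-degenerate) weakly \emph{semi}-norming systems, the argument in fact goes through at that level of generality too, so your closing comment about the distinction is slightly off---but this does not affect the correctness of the proof itself.
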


We note in passing that, since Sidorenko systems are necessarily translation invariant (see, for instance,~\cite{KLM23}), this corollary again implies that weakly norming systems are translation invariant.

By using~\Cref{prop:subsystem}, we can also generalise Corollary~\ref{Sidorenko} to say that every weakly norming system dominates all of its subsystems in the following sense (see~\cite{CL23} for more on the analogous property for graphs).

\begin{corollary}\label{cor:domination}
    Let $L'$ be a subsystem of an $m\times k$ weakly norming system $L$ induced on $I\subseteq[k]$ with $|I|=k'$. Then, for every non-negative function $f:G\rightarrow \mathbb{R}_{\geq 0}$,
    \begin{align*}
        t_L(f)^{1/k} \geq t_{L'}(f)^{1/k'}.
    \end{align*}
    Furthermore, if $L$ is norming, then $|t_L(f)|^{1/k}\geq |t_{L'}(f)|^{1/k'}$ for all $f:G\rightarrow\mathbb{R}$.
\end{corollary}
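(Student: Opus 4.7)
The plan is to derive this directly from the rainbow H\"older-type inequalities in \Cref{weakly_Holder} (for the non-negative case) and \Cref{Holder} (for the general case), using \Cref{prop:subsystem} to realise $t_{L'}(f)$ as a value of $t_L$ at carefully chosen arguments.

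More precisely, set $f_i = f$ for $i \in I$ and $f_j = \mathbf{1}$ (the constant function $1$ on $G$) for $j \notin I$. By \Cref{prop:subsystem}, we have $t_{L'}(f) = t_L(f_1,\ldots,f_k)$. I would then compute $\|\mathbf{1}\|_L$: since every tuple in $\Sol(L)$ contributes $1$ to the expectation, $t_L(\mathbf{1}) = 1$, and hence $\|\mathbf{1}\|_L = 1$. For the weakly norming case, applying \Cref{weakly_Holder} to the non-negative functions $f_1,\ldots,f_k$ yields
\begin{align*}
    t_{L'}(f) = t_L(f_1,\ldots,f_k) \leq \prod_{i=1}^k \|f_i\|_L = \|f\|_L^{k'} \cdot \|\mathbf{1}\|_L^{k-k'} = t_L(f)^{k'/k},
\end{align*}
and raising both sides to the $1/k'$ power gives the desired inequality. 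For the norming case, the same substitution in \Cref{Holder} produces
\begin{align*}
    |t_{L'}(f)| = |t_L(f_1,\ldots,f_k)| \leq \|f\|_L^{k'} = |t_L(f)|^{k'/k},
\end{align*}
which yields the analogous bound after taking the $k'$-th root.

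This argument is essentially mechanical once \Cref{prop:subsystem} is in hand; the only genuinely non-trivial point is that substituting the constant function $\mathbf{1}$ into $\|\cdot\|_L$ really gives $1$, which is immediate from translation invariance (\Cref{prop:degeneracy}) ensuring $\Sol(L)$ is non-empty. Note also that for the second statement, we do not need $f$ to be non-negative, since the full H\"older-type bound in \Cref{Holder} applies to arbitrary real-valued functions when $L$ is norming. I do not foresee any significant obstacle: the main content has already been packaged into \Cref{prop:subsystem} and the two H\"older propositions.
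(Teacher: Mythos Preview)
Your proposal is correct and follows essentially the same approach as the paper: set the variables outside $I$ to the constant function $1$, invoke \Cref{prop:subsystem} to identify the result with $t_{L'}(f)$, and then apply the H\"older-type inequality from \Cref{weakly_Holder} (or \Cref{Holder} in the norming case). One minor remark: you do not need translation invariance to see that $\|\mathbf{1}\|_L=1$, since $t_L(\mathbf{1})=\EE_{\Sol(L)}1=1$ holds automatically (and $\Sol(L)$ always contains the zero vector).
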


The \emph{girth} of a linear system $L$ is the minimum size of the support of a non-zero vector in the row space of $L$. We will now show that the girth of any Sidorenko system, and hence any weakly norming system, is even. This result was previously proved by Kam\v cev, Liebenau and Morrison~\cite{KLM23}, though we give a somewhat shorter proof using Fourier methods.

\begin{proposition}\label{prop:even-girth}
    The girth of a Sidorenko system is even.
\end{proposition}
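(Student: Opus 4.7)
The plan is to prove the contrapositive via a Fourier-analytic perturbation argument: assuming the girth $g$ of $L$ is odd, I will construct a non-negative function $f:\mathbb{F}_q\to\mathbb{R}$ (working at $n=1$ is enough, since Sidorenko requires the inequality for every $n$) with $\EE[f]=1$ but $t_L(f)<1$, contradicting the Sidorenko property.

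Specifically, I would take $f(x) = 1 + \varepsilon\bigl(1 - q\cdot\mathbf{1}_{x=0}\bigr)$ for a parameter $\varepsilon\in(0, 1/(q-1))$, which keeps $f$ non-negative on $\mathbb{F}_q$ and satisfies $\EE[f]=1$. A one-line computation gives $\ft{f}(0)=1$ and $\ft{f}(\xi)=-\varepsilon$ for every nonzero $\xi\in\mathbb{F}_q$. Inserting this into the formula from \Cref{prop:fourier} and using that $\vec{\xi}\mapsto\bigl(L_1^t\vec\xi,\ldots,L_k^t\vec\xi\bigr)$ is a bijection from $\mathbb{F}_q^m$ onto the row space of $L$ (since the rows of $L$ are linearly independent), one obtains
\begin{align*}
t_L(f) = \sum_{v\in\mathrm{rowspace}(L)} \prod_{j=1}^k \ft{f}(v_j) = \sum_{v\in\mathrm{rowspace}(L)}(-\varepsilon)^{|\mathrm{supp}(v)|},
\end{align*}
since coordinates $v_j=0$ contribute $1$ and coordinates $v_j\ne 0$ contribute $-\varepsilon$ to the product.

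Grouping by support size and writing $s_i := \bigl|\{v\in\mathrm{rowspace}(L):|\mathrm{supp}(v)|=i\}\bigr|$, this simplifies to $t_L(f) = 1 + \sum_{i\ge g}(-\varepsilon)^i s_i$. By the definition of girth, $s_g\ge 1$, so when $g$ is odd the leading term $-\varepsilon^g s_g$ is strictly negative. The one mild obstacle is to bound the error terms: since the row space has cardinality $q^m$ independent of $\varepsilon$, the tail $\sum_{i>g}(-\varepsilon)^i s_i$ has absolute value at most $\varepsilon^{g+1}q^m$, so choosing $\varepsilon$ small enough (for instance, $\varepsilon<\min\bigl(s_g/q^m,\,1/(q-1)\bigr)$) makes $t_L(f)<1=\EE[f]^k$, contradicting the Sidorenko property and forcing $g$ to be even.
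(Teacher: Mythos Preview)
Your proof is correct and follows essentially the same Fourier-perturbation strategy as the paper: both construct a non-negative function with $\hat f(0)=1$ and small negative Fourier coefficients away from zero, then use \Cref{prop:fourier} to see that the odd-girth term makes $t_L(f)<1$. Your choice of test function (constant Fourier coefficient $-\varepsilon$ on all nonzero frequencies at $n=1$) is a clean variant that yields the neat identity $t_L(f)=\sum_{v\in\mathrm{rowspace}(L)}(-\varepsilon)^{|\mathrm{supp}(v)|}$, whereas the paper localises the Fourier support to $\{0,\pm a_i\gamma\}$ for one short vector; both lead to the same $1-\varepsilon^g+O(\varepsilon^{g+1})$ estimate.
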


\begin{proof}
    Let $L$ be an $m\times k$ system whose girth $\ell$ is odd. Suppose, without loss of generality, that one of the non-zero vectors in the row space of $L$ with minimum support has the form $(a_1,\ldots,a_\ell,0,\ldots,0)$ for non-zero $a_1,\ldots,a_\ell\in \mathbb{F}_q$. Let $\alpha\in (0,1)$ and $\gamma$ be a non-zero element of $\ft{G}$ and consider the function $f_\alpha\colon G\rightarrow \RR$ with Fourier transform 
    \begin{equation*}
        \ft{f_\alpha}(\xi) = \begin{cases}
                    1&\text{if $\xi=0$},\\
                    -\alpha&\text{if $\xi=\pm a_i\gamma$},\\
                    0&\text{otherwise},\\
                    \end{cases}
    \end{equation*}
    noting that if $\alpha\leq 1/2\ell$, then $f_\alpha$ takes values in $[0,2]$. By \Cref{prop:fourier}, we have 
    \begin{equation*}
        t_L(f_\alpha)=\sum_{\xi \in \hat{G}^m}\prod_{j=1}^k \ft{f_\alpha}\left(L^t_j\xi\right)\leq 1+(-\alpha)^\ell+O(\alpha^{\ell+1}).
    \end{equation*}
    If $\alpha$ is sufficiently small, then this is less than 1, but, because $\EE[f_\alpha]=\ft{f_\alpha}(0)=1$, this means that $L$ is not Sidorenko.
\end{proof}

Recall that if $L$ is an $m\times k$ norming system, then $\|f\|_L$ equals $|t_L(f)|^{1/k}$. The following proposition shows that we do not need the modulus.

\begin{proposition}\label{prop:positivity}
    If $L$ is a norming system, then $t_L(f)> 0$ for all $f\neq 0$. In particular, $L$ must have an even number of variables.
\end{proposition}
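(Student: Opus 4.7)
The plan is to exploit continuity of $t_L$ together with the non-vanishing consequence of the norming property. Since $\|\cdot\|_L$ is a norm, we have $\|f\|_L = |t_L(f)|^{1/k} > 0$ for every non-zero $f$, so $t_L(f) \neq 0$ for all $f \neq 0$. I will upgrade this non-vanishing to strict positivity by a sign-constancy argument, and then deduce the parity of $k$.

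First I would observe that $t_L$, viewed as a function of $f : G \to \mathbb{R}$, is a homogeneous polynomial of degree $k$ in the $|G|$ coordinates $(f(x))_{x \in G}$. In particular, $t_L$ is continuous on $\mathbb{R}^{|G|}$. Since $|G| = q^n \geq 2$, the punctured Euclidean space $\mathbb{R}^{|G|} \setminus \{0\}$ is path-connected (indeed, it is simply connected once $|G| \geq 3$, and still connected when $|G| = 2$). A continuous real-valued function that never vanishes on a connected set has constant sign, so $t_L$ has a definite sign on the set of non-zero functions.

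To pin down that sign, I would evaluate at the constant function $f \equiv 1$. Since $(1,1,\ldots,1) \in \Sol(L)$ because $L$ is translation invariant by \Cref{prop:degeneracy}, and in any case $t_L(1) = \EE_{(x_1,\ldots,x_k) \in \Sol(L)} 1 = 1$, we have $t_L(1) = 1 > 0$. By the sign-constancy conclusion of the previous paragraph, $t_L(f) > 0$ for every $f \neq 0$, proving the first assertion.

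For the second assertion, I would apply this positivity to both $f$ and $-f$: by homogeneity of degree $k$, $t_L(-f) = (-1)^k t_L(f)$. Taking any $f \neq 0$ and using that $t_L(f) > 0$ and $t_L(-f) > 0$ forces $(-1)^k = 1$, i.e., $k$ is even. There is no real obstacle here; the only thing to be a bit careful about is ensuring $\mathbb{R}^{|G|} \setminus \{0\}$ is connected, which is automatic because $\mathbb{F}_q^n$ has at least two elements for any $q \geq 2$ and $n \geq 1$.
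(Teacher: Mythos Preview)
Your proof is correct and somewhat cleaner than the paper's. Both arguments rest on the same two ingredients --- continuity of $t_L$ and the fact that norming forces $t_L(f)\neq 0$ for $f\neq 0$ --- but they deploy them differently. You invoke the connectedness of $\mathbb{R}^{|G|}\setminus\{0\}$ directly to conclude global sign constancy, then anchor the sign at the constant function $1$. The paper instead argues by contradiction: assuming $t_L(f)<0$ for some $f$, it pushes $f$ along two linearly independent rays (towards $\mathbf{1}_{\{v_1\}}$ and $\mathbf{1}_{\{v_2\}}$, on each of which $t_L$ eventually becomes positive), applies the intermediate value theorem on each ray to produce two zeros, and derives a contradiction because both zeros would have to be the origin yet cannot coincide. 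Your topological one-liner subsumes this ray-by-ray analysis and avoids the small case check that neither ray passes through the origin. You also spell out the parity conclusion via $t_L(-f)=(-1)^k t_L(f)$, which the paper leaves implicit.
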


\begin{proof}
    Suppose, for the sake of contradiction, that there is some non-zero $f$ for which $t_L(f)<0$. Let $v_1$ and $v_2$ be two distinct elements of $G$. By the translation invariance of $L$, $t_L(\one_{\{v_1\}})$ and $t_L(\one_{\{v_2\}})$ are both positive. Hence, by the continuity of $t_L$ and the intermediate value theorem, there are $c_1,c_2\in \RR_{> 0}$ such that $t_L(f+c_1\one_{\{v_1\}})=t_L(f+c_2\one_{\{v_2\}})=0$. However, since $L$ is norming, this implies that $f+c_1\one_{\{v_1\}}=f+c_2\one_{\{v_2\}}=0$, which cannot be the case as $\one_{\{v_1\}}$ and $\one_{\{v_2\}}$ are clearly linearly independent.
\end{proof}

Finally, we show that the only systems which are semi-norming but not norming are the zero matrices.

\begin{proposition}\label{semi-norming-trivial}
    If a system $L$ is semi-norming but not norming, then it must be the zero matrix.
\end{proposition}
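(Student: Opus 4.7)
The plan is to combine the H\"older-type reformulation of \Cref{Holder} with Fourier inversion (\Cref{prop:fourier}) to pin down the structure of any hypothetical semi-norming but non-norming system. Suppose $L$ is semi-norming but not norming, so there is a non-zero $f:\mathbb{F}_q^n\to\mathbb{R}$ with $\|f\|_L=0$, i.e.\ $t_L(f)=0$. Then \Cref{Holder} applied with $f$ in the first slot forces $t_L(f,g_2,\ldots,g_k)=0$ for every real-valued tuple $(g_2,\ldots,g_k)$, and by the multilinearity of $t_L$ in each argument this extends to all complex-valued $g_j$.

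I would then apply \Cref{prop:fourier} with $g_j(x)=e(\alpha_j^T x)$ for $j\neq\hat{\jmath}$, whose Fourier transforms are point masses at the frequencies $\alpha_j$. This turns the vanishing into
\[
\sum_{\xi\in\ft G^m:\,L_j^t\xi=\alpha_j\text{ for all }j\neq \hat{\jmath}}\ft f(L_{\hat{\jmath}}^t\xi)=0
\]
for every $\hat{\jmath}\in[k]$ and every choice of $\alpha_j\in\ft G$, $j\neq\hat{\jmath}$. The $\mathbb{F}_q$-linear map $T_{\hat{\jmath}}:\ft G^m\to\ft G^{k-1}$, $\xi\mapsto (L_j^t\xi)_{j\neq \hat{\jmath}}$, turns out to be injective if and only if $e_{\hat{\jmath}}$ does not lie in the row space of $L$, while the projection $L_{\hat{\jmath}}^t:\ft G^m\to \ft G$ is surjective if and only if the $\hat{\jmath}$-th column of $L$ is non-zero. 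If both conditions hold for some $\hat{\jmath}$, then varying $\alpha_j$ over the image of $T_{\hat{\jmath}}$ forces $\ft f$ to vanish on the image of $L_{\hat{\jmath}}^t$, i.e.\ on all of $\ft G$, contradicting $f\neq 0$.

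Consequently, for each $j\in[k]$ either the $j$-th column of $L$ is zero (so $x_j$ is a free variable in $\Sol(L)$) or $e_j$ lies in the row space of $L$ (so $x_j=0$ is an implied equation), and these cases are mutually exclusive. Setting $S:=\{j\in [k]:e_j\in\text{row space of }L\}$, the row space of $L$ equals $\mathrm{span}\{e_j:j\in S\}$, so $\Sol(L)=\{x\in G^k:x_j=0\text{ for all }j\in S\}$ and a short direct computation gives $t_L(f)=f(0)^{|S|}\,\EE[f]^{k-|S|}$. Non-degeneracy rules out $|S|=k$. If $0<|S|<k$, I would test the resulting candidate seminorm $\|f\|_L=|f(0)|^{|S|/k}|\EE[f]|^{(k-|S|)/k}$ on $f_1=\mathbf{1}_{\{0\}}$ and $f_2=\mathbf{1}_{\{v\}}$ for any non-zero $v\in G$, obtaining $\|f_2\|_L=0$ but $\|f_1+f_2\|_L=2^{(k-|S|)/k}\|f_1\|_L>\|f_1\|_L+\|f_2\|_L$, contradicting the triangle inequality. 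Hence $|S|=0$, which means $L$ has no rows, i.e.\ $L$ is the zero matrix.

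The main obstacle is the structural dichotomy in the Fourier step: transferring injectivity of $T_{\hat{\jmath}}$ on the $\mathbb{F}_q$-vector space $\ft G^m$ to the row-space condition over $\mathbb{F}_q^m$. The key observation is that a non-trivial kernel element of $T_{\hat{\jmath}}$ exists if and only if some non-zero $v\in\mathbb{F}_q^m$ satisfies $v^TL\in\mathrm{span}(e_{\hat{\jmath}})$, which by the linear independence of the rows of $L$ forces $v^TL=c\,e_{\hat{\jmath}}$ with $c\neq 0$, i.e.\ $e_{\hat{\jmath}}$ is in the row space; conversely, any such $v$ lifts to an element of $\ft G^m$ via multiplication by any non-zero element of $\ft G$.
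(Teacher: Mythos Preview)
Your proof is correct and proceeds by a genuinely different route from the paper's. The paper argues much more directly: assuming $L$ is not the zero matrix, it picks a coordinate $i$ such that no solution in $\Sol(L)$ is supported only on the $i$-th coordinate, uses translation invariance (established in \Cref{prop:degeneracy}) to deduce that the only solution with $x_j=v$ for all $j\neq i$ has $x_i=v$, and then plugs $\mathbf{1}_{\{v\}}$ into all slots $j\neq i$ to read off $f(v)=0$ directly from the H\"older inequality. This is a physical-space argument of a few lines.

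Your approach instead uses Fourier test functions to extract a global structural constraint on $L$: every column of $L$ is zero or every non-zero column has $e_j$ in the row space, forcing $L$ to be (after a change of basis) a block of standard basis vectors. You then compute $t_L(f)=f(0)^{|S|}\EE[f]^{k-|S|}$ explicitly and eliminate the case $0<|S|<k$ by testing the triangle inequality on two indicators. This is longer, but it has the advantage of not appealing to translation invariance, and the structural dichotomy you derive (each coordinate is either entirely free or entirely fixed to $0$) is a pleasant intermediate statement in its own right. The final step where you conclude ``$|S|=0$ means $L$ has no rows'' is correct under the paper's convention that a linear system has linearly independent rows, so a trivial row space forces $m=0$; this is what the paper means by ``the zero matrix''.

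One minor expository point: you place $f$ ``in the first slot'' when invoking \Cref{Holder} but then vary $\hat{\jmath}$ over all of $[k]$; this is fine because the H\"older inequality is symmetric in the slots, but it would be cleaner to say so explicitly.
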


\begin{proof}
    There must exist $i\in [k]$ such that if $x\in \Sol(L)$ satisfies $x_j=0$ for all $j\neq i$, then $x_i=0$ too. Indeed, otherwise $L$ is the zero matrix and we are done. By the translation invariance of $L$, the same is true if we replace 0 by any other $v\in G$, i.e., if $x_j=v$ for all $j\neq i$, then $x_i=v$ too. Without loss of generality, we may assume that $i=1$. Suppose now that $f$ is a function with $\Vert f\Vert_L=0$. By \Cref{Holder}, we have that
    \begin{align*}
        |t_L(f,\one_{\{v\}},\ldots,\one_{\{v\}})|\leq \Vert f\Vert_L/|\Sol(L)|^{k-1} = 0
    \end{align*}
    for all $v$. On the other hand, we know that the only solution that contributes to $t_L(f,\one_{\{v\}},\ldots,\one_{\{v\}})$ is $(v,\ldots,v)$, meaning that $|f(v)|=|t_L(f,\one_{\{v\}},\ldots,\one_{\{v\}})|\vert \Sol(L)\vert=0$. Since this holds for all $v$, we have $f=0$.
\end{proof}

\section{The isomorphism theorem} \label{sec:iso}

For graphs, it is a fundamental fact (see, for example,~\cite[Theorem 5.29]{L12}) that if $|\Hom(H_1,F)|=|\Hom(H_2,F)|$ for every graph $F$, then $H_1$ and $H_2$ must be isomorphic. In this section, we prove the arithmetic analogue of this (left-)isomorphism theorem and apply it to give some further necessary conditions for a linear system to be weakly norming.

To state the result, we should first define what isomorphism means for linear systems: two $m\times k$ linear systems $L$ and $M$ are \emph{isomorphic} if $M$ can be obtained by applying row operations and column permutations to $L$. That is, $L$ and $M$ are isomorphic if the vector space $\Sol(M)$ can be obtained from $\Sol(L)$ by simply permuting the indices of the variables. 

\begin{theorem}\label{thm:isom}
    Two $m\times k$ linear systems $L$ and $M$ are isomorphic if and only if $t_L(f)=t_M(f)$ for all positive integers $n$ and all non-negative functions $f:\mathbb{F}_q^n\rightarrow\mathbb{R}_{\geq 0}$.
\end{theorem}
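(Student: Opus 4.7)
The forward direction is immediate: row operations preserve $\Sol(L)$, and a column permutation of $L$ corresponds to a coordinate permutation of $\Sol(L)$, but $t_L(f)$ is invariant under such permutations because $f(x_1)\cdots f(x_k)$ is symmetric. For the nontrivial converse, I plan to proceed in three stages.

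\emph{Step 1 (lifting to $\mathbb{C}$).} Both $t_L(f)$ and $t_M(f)$ are homogeneous polynomials of degree $k$ in the $|G|$ formal variables $\{f(x)\}_{x\in G}$. Since the non-negative orthant $\mathbb{R}_{\geq 0}^G$ is Zariski-dense in $\mathbb{C}^G$, the hypothesis forces the identity $t_L(f)=t_M(f)$ to hold as a polynomial identity and hence for every complex-valued $f$.

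\emph{Step 2 (Fourier side).} By \Cref{prop:fourier},
$$t_L(f)=\sum_{\vec v\in V_L}\ft{f}(v_1)\cdots \ft{f}(v_k),\qquad V_L:=\bigl\{\bigl(L_1^t\vec\xi,\dots,L_k^t\vec\xi\bigr):\vec\xi\in\ft{G}^m\bigr\}\subseteq\ft{G}^k.$$
Here $V_L$ is the annihilator of $\Sol(L)$ in $\ft{G}^k$: it is unchanged by row operations on $L$, while a column permutation of $L$ permutes its coordinates. Hence $L\cong M$ is equivalent to $V_L=\sigma V_M$ for some $\sigma\in S_k$. Because $f\mapsto\ft{f}$ is a bijection on complex-valued functions, Step 1 translates to
$$\sum_{\vec v\in V_L}g(v_1)\cdots g(v_k)\;=\;\sum_{\vec v\in V_M}g(v_1)\cdots g(v_k)$$
for every $g\colon\ft{G}\to\mathbb{C}$, and treating the values $\{g(\xi)\}_{\xi\in\ft{G}}$ as independent indeterminates this becomes an identity of polynomials.

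\emph{Step 3 (combinatorial reconstruction).} Matching the coefficient of each monomial $\prod_\xi g(\xi)^{e(\xi)}$ with $\sum_\xi e(\xi)=k$ shows that, for every type $e\colon\ft{G}\to\mathbb{Z}_{\geq 0}$, the subspaces $V_L$ and $V_M$ contain equal numbers of tuples of type $e$; equivalently, every $S_k$-orbit in $\ft{G}^k$ meets $V_L$ and $V_M$ in the same number of points. The remaining task is to upgrade this orbit-count information to a single $\sigma\in S_k$ with $\sigma V_L=V_M$. For one-dimensional $V_L=\langle\vec v\rangle$ the conclusion is immediate: some non-zero scalar multiple of a generator of $V_M$ must have the same multiset of coordinates as $\vec v$, hence be a coordinate permutation of $\vec v$. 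I expect the higher-dimensional case to be the main obstacle, as the inclusion-exclusion passage to injective-homomorphism counts used in Lov\'asz's graph-theoretic proof has no obvious arithmetic analogue. My approach would be induction on $\dim V_L$, reducing to a lower-dimensional subspace either by quotienting out a well-chosen $\mathbb{F}_q$-line common to $V_L$ and $V_M$ (matched via the base case) or by specializing $g$ to be supported on a carefully chosen subset $S\subseteq\ft{G}$ so that $V_L\cap S^k$ and $V_M\cap S^k$ retain enough linear-algebraic structure to be matched inductively.
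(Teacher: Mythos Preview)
Your Steps~1 and~2 are correct and in fact cleaner than the paper's corresponding argument: where the paper uses random $\pm 1$ signs and random unit complex numbers (\Cref{lem:boost}) to pass from non-negative $f$ to the symmetrised functional $\tau_L(f_1,\dots,f_k)$ for complex $f_i$, your Zariski-density observation gets there in one line. Your orbit-count conclusion at the start of Step~3 is exactly equivalent to the paper's intermediate conclusion $\tau_L=\tau_M$, since $\tau_L(f_1,\dots,f_k)=\sum_{v\in V_L}\sum_{\pi}\prod_j \ft{f_j}(v_{\pi(j)})$ depends on $V_L$ only through its intersection sizes with $S_k$-orbits.

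The genuine gap is the remainder of Step~3. You correctly identify that upgrading ``equal orbit counts for every $n$'' to ``$V_L=\sigma V_M$ for a single $\sigma$'' is the crux, but the inductive scheme you sketch is not carried out, and neither proposed reduction obviously preserves enough structure: quotienting $V_L$ and $V_M$ by a common line need not respect the $S_k$-action, and restricting $g$ to a subset $S\subseteq\ft G$ gives $V_L\cap S^k$, which is not a subspace. The paper bypasses induction entirely with a direct construction. Fix $\gamma\in\ft G^m$ and take $f_j^{(\gamma)}$ with $\ft{f_j^{(\gamma)}}(\eta)=\mathbf 1[\eta=L_j^t\gamma]$; then
\[
t_M\bigl(f_{\pi(1)}^{(\gamma)},\dots,f_{\pi(k)}^{(\gamma)}\bigr)=\sum_{\xi\in\ft G^m}\mathbf 1\bigl[M^t\xi=\pi(L)^t\gamma\bigr],
\]
and summing over $\gamma$ gives $\bigl|\im(\pi(L)^t)\cap\im(M^t)\bigr|^n$. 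If $L\not\cong M$, this is at most $q^{n(m-1)}$ for every $\pi$, whereas the analogous sum for $\tau_L$ is at least $q^{nm}$. Taking $\ell$-fold tensor powers $g_j^{(\Gamma)}=f_j^{(\gamma_1)}\otimes\cdots\otimes f_j^{(\gamma_\ell)}$ amplifies the ratio $q^{nm}/q^{n(m-1)}=q^n$ until it beats the factor $k!$ coming from the sum over $\pi\in S_k$, producing a concrete $\Gamma$ with $\tau_L(g^{(\Gamma)})\neq\tau_M(g^{(\Gamma)})$. This test-function construction, together with the tensor-power amplification to defeat the symmetrisation, is the missing idea.
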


There are two main steps in the proof. The first is encapsulated in the following lemma, which gives us much more flexibility in substituting various functions $f$ into $t_L(f)$.
We define the \emph{symmetrised functional} $\tau_L(\cdot)$ by
\begin{align*}
    \tau_L(f_1,\dots,f_k) := \sum_{\pi\in S_k} t_L(f_{\pi(1)},\dots,f_{\pi(k)})
\end{align*}
for any functions $f_1,\dots,f_k : \mathbb{F}_q^n \rightarrow \mathbb{C}$, where $S_k$ denotes the set of all permutations of $[k]$.

\begin{lemma}\label{lem:boost}
     Let $L$ and $M$ be two $m\times k$ linear systems. Then the following are equivalent:
     \begin{enumerate}[(i)]
         \item $t_L(f)=t_{M}(f)$ for every non-negative function $f:\mathbb{F}_q^n\rightarrow\mathbb{R}_{\geq 0}$;\label{it:+}
         \item $t_L(f)=t_{M}(f)$ for every real-valued function $f:\mathbb{F}_q^n\rightarrow\mathbb{R}$;\label{it:real}
         \item $\tau_L(f_1,\dots,f_k)=\tau_{M}(f_1,\dots,f_k)$ for any complex-valued functions $f_1,\dots,f_k$ on $\mathbb{F}_q^n$.\label{it:complex}
     \end{enumerate}
\end{lemma}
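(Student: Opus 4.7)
The plan is to establish the cycle (iii) $\Rightarrow$ (ii) $\Rightarrow$ (i) $\Rightarrow$ (iii), with the common engine throughout being the multi-linearity of $t_L$ in each of its $k$ arguments. Two of the three implications are essentially free: evaluating $\tau_L(f,\dots,f)=k!\cdot t_L(f)$ directly from the definition shows that (iii) immediately implies (ii), and (ii) trivially implies (i) since non-negative functions are in particular real-valued.

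The substantive direction is (i) $\Rightarrow$ (iii), which I would handle by a polarization argument carried out in two passes. First, for non-negative $f_1,\dots,f_k$ and positive real parameters $t_1,\dots,t_k$, the function $\sum_j t_jf_j$ is non-negative, so (i) gives $t_L(\sum_j t_jf_j)=t_M(\sum_j t_jf_j)$. Expanding both sides via multi-linearity in each of the $k$ slots yields polynomials in $t_1,\dots,t_k$ that agree on a non-empty open subset of $\mathbb{R}^k$ and hence as formal polynomials. Matching the coefficient of $t_1t_2\cdots t_k$ on each side gives
\begin{align*}
\sum_{\pi\in S_k}t_L(f_{\pi(1)},\dots,f_{\pi(k)})=\sum_{\pi\in S_k}t_M(f_{\pi(1)},\dots,f_{\pi(k)}),
\end{align*}
which is exactly $\tau_L(f_1,\dots,f_k)=\tau_M(f_1,\dots,f_k)$ for non-negative tuples.

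To upgrade from non-negative to complex-valued tuples, I would perform two further routine multi-linear expansions. For real-valued $f_j$, write $f_j=f_j^+-f_j^-$ with $f_j^\pm\geq 0$ and use multi-linearity of $\tau_L$ in each slot to expand $\tau_L(f_1,\dots,f_k)$ as a signed sum of $2^k$ terms of the form $\tau_L(h_1,\dots,h_k)$ with $h_j\in\{f_j^+,f_j^-\}$; each such term agrees between $L$ and $M$ by the previous paragraph. Then, writing $f_j=g_j+ih_j$ with $g_j,h_j$ real and expanding $\mathbb{C}$-multilinearly once more reduces the complex case to the real case, completing (iii).

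The main obstacle is really the initial conceptual step: to notice that (i), which superficially concerns only single-function functionals, already encodes multi-variate information once one has the freedom to substitute affine combinations $\sum_j t_j f_j$ with arbitrary coefficients and extract information from the polynomial in $t_1, \dots, t_k$. After this shift in viewpoint, every step in the argument reduces to a polynomial identity or a routine multi-linear expansion, so no genuine technical difficulties are expected beyond careful bookkeeping.
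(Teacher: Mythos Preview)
Your proof is correct. Both arguments rest on polarization via the multilinearity of $t_L$, but the execution differs. The paper proves (i)$\Rightarrow$(ii)$\Rightarrow$(iii) using randomized averaging: for (i)$\Rightarrow$(ii) one averages $\varepsilon_1\cdots\varepsilon_k\,t_L\big(\sum_i(|f|+\varepsilon_i f)\big)$ over i.i.d.\ Rademacher signs $\varepsilon_i\in\{\pm1\}$ to recover $k!\,t_L(f)$ from evaluations at non-negative functions, and for (ii)$\Rightarrow$(iii) one averages $z_1\cdots z_k\,t_L\big(\sum_i(z_i\overline{f_i}+\overline{z_i} f_i)\big)$ over i.i.d.\ unit complex numbers to recover $\tau_L(f_1,\dots,f_k)$ from evaluations at real functions. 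Your route instead goes directly (i)$\Rightarrow$(iii): you read off $\tau_L$ as the coefficient of $t_1\cdots t_k$ in the polynomial $t_L\big(\sum_j t_jf_j\big)$, and then climb from non-negative to real to complex tuples by explicit positive/negative and real/imaginary decompositions in each slot. Your version is more elementary and makes no appeal to probability; the paper's averaging identities are slicker in that each step is a single formula rather than a signed sum over $2^k$ terms, and they bypass the intermediate statement ``$\tau_L=\tau_M$ on non-negative tuples'' altogether.
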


\begin{proof}
    Since the implication \ref{it:complex}$\Rightarrow$\ref{it:+} is trivial, it will suffice to show that \ref{it:+}$\Rightarrow$\ref{it:real} and \ref{it:real}$\Rightarrow$\ref{it:complex}.
    We first show that  \ref{it:+}$\Rightarrow$\ref{it:real}. Let $\varepsilon_1,\dots,\varepsilon_k$ be i.i.d.~uniform random variables taking values in $\{\pm 1\}$ and set  $\vec{\varepsilon}=(\varepsilon_1,\dots,\varepsilon_k)$.
    For a real-valued function $f$ on $\mathbb{F}_q^n$, 
    \begin{align*}
        \EE_{\vec{\varepsilon}} \left[\varepsilon_1\varepsilon_2\cdots\varepsilon_k t_L\left(\sum_{i=1}^k(|f|+\varepsilon_if)\right) \right]= k! t_L(f).
    \end{align*}
    Indeed, expanding $t_L\left(\sum_{i=1}^k(|f|+\varepsilon_if)\right)$ yields 
    a sum of terms of the form $t_L(h_1,h_2,\dots,h_k)$ where each $h_i$ is either $|f|$ or $\varepsilon_j f$ for some $j$. Using $\EE [\varepsilon_j]=0$ and $\EE [\varepsilon_j^2] =1$, we see that the only terms that do not vanish after averaging over $\vec{\varepsilon}\in\{\pm 1\}^k$ are those $t_L(h_1,h_2,\dots,h_k)$ such that each $h_i=\varepsilon_j f$ for a unique $j$. This is exactly what the right-hand side represents. As $\sum_{i=1}^k(|f|+\varepsilon_if)\geq 0$ for any real-valued $f$, we have that \ref{it:+}$\Rightarrow$\ref{it:real}, as required.

    To show that \ref{it:real}$\Rightarrow$\ref{it:complex}, let $z_1,\dots,z_k$ be i.i.d.~uniform random unit complex numbers and set $\vec{z}=(z_1,\dots,z_k)$.
    For complex-valued functions $f_1,\dots,f_k$ on $\mathbb{F}_q^n$, 
    \begin{align*}
        \EE_{\vec{z}} \left[z_1z_2\cdots z_k t_L\left(\sum_{i=1}^k(z_i \overbar{f_i} +\overbar{z_i}f_i)\right) \right]= \sum_{\pi\in S_k}t_L(f_{\pi(1)},\dots,f_{\pi(k)}).
    \end{align*}
    Indeed, we can use that $\EE [z_i] = \EE[z_i^2] = 0$ and $\EE[z_i\overbar{z_i}]=1$ to deduce that all terms except those of the form $t_L(f_{\pi(1)},\dots,f_{\pi(k)})$ vanish in the expansion of $t_L\left(\sum_{i=1}^k(z_i \overbar{f_i} +\overbar{z_i}f_i)\right)$ after averaging over $\vec{z}\in (S^1)^k$. As $\sum_{i=1}^k(z_i \overbar{f_i} +\overbar{z_i}f_i)$ is always real-valued,  
    we have that \ref{it:real}$\Rightarrow$\ref{it:complex}, as required.
\end{proof}

The second step involves substituting suitable functions $\ft{g_1},\ft{g_2},\dots,\ft{g_k}$ into the inversion formula~\eqref{eq:inversion} to distinguish $t_L(\cdot)$ and $t_{M}(\cdot)$ if $L$ and $M$ are not isomorphic.
We first record a technical lemma that will be needed in our computation.  

\begin{lemma}\label{lem:rowspace}
    Let $L$ and $M$ be $m\times k$ linear systems and let $\gamma\in \ft{G}^m$.
For each $j=1,2,\dots,k$, let $f_j^{(\gamma)}$ be the function with Fourier transform $\ft{f_j^{(\gamma)}}(\eta):=\mathbf{1}\big[\eta = L_j^t\gamma\big]$. Then 
\begin{align*}
    t_{M}(f_1^{(\gamma)}, \ldots, f_k^{(\gamma)})= \sum_{\xi \in \ft{G}^m}\mathbf{1}\big[M^t\xi=L^t\gamma\big].
\end{align*}
\end{lemma}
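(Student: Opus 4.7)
The plan is to invoke the Fourier inversion formula from \Cref{prop:fourier} and let the definitions do the work. First, I would substitute $f_j = f_j^{(\gamma)}$ for each $j$ into the identity
\begin{align*}
    t_M(f_1,\ldots,f_k) = \sum_{\xi\in \ft{G}^m}\prod_{j=1}^k \ft{f_j}\!\left(\sum_{i=1}^m M_{ij}\xi_i\right) = \sum_{\xi\in \ft{G}^m}\prod_{j=1}^k \ft{f_j}(M_j^t\xi),
\end{align*}
which, by the very definition of $\ft{f_j^{(\gamma)}}$, rewrites the product $\prod_{j=1}^k \ft{f_j^{(\gamma)}}(M_j^t\xi)$ as the product of indicators $\prod_{j=1}^k \mathbf{1}\bigl[M_j^t\xi = L_j^t\gamma\bigr]$.

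Next, I would observe that this product of indicators equals $1$ precisely when $M_j^t\xi = L_j^t\gamma$ holds simultaneously for every $j\in[k]$, which is exactly the single matrix equation $M^t\xi = L^t\gamma$ (read column by column). The product therefore collapses to the single indicator $\mathbf{1}[M^t\xi = L^t\gamma]$, and summing over $\xi\in \ft{G}^m$ yields the claimed right-hand side.

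The only point requiring a brief check is that a function $f_j^{(\gamma)}$ with the prescribed Fourier transform actually exists, but this is immediate from Fourier inversion, since the Fourier transform is a bijection on the space of complex-valued functions on $G$. I do not anticipate any real obstacle here: the lemma is a bookkeeping identity that repackages \Cref{prop:fourier} in a form tailored to the subsequent step in the proof of \Cref{thm:isom}, where these particular test functions will be used to tell non-isomorphic systems apart through $\tau_L(\cdot)$.
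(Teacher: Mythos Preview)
Your proposal is correct and matches the paper's proof essentially line for line: apply \Cref{prop:fourier}, substitute the defining indicator for each $\ft{f_j^{(\gamma)}}$, and collapse the product of coordinatewise conditions into the single matrix equation $M^t\xi=L^t\gamma$. The paper omits your side remark about the existence of $f_j^{(\gamma)}$, but that observation is harmless and obviously true.
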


\begin{proof}
    By~\eqref{eq:inversion}, it follows that 
\begin{align*}
    t_{M}(f_{1}^{(\gamma)}, \dots, f_{k}^{(\gamma)}) = \sum_{\xi\in \ft{G}^m} \prod_{j=1}^{k}\ft{f_{j}^{(\gamma)}}\left(M_{j}^t\xi \right)= \sum_{\xi \in \ft{G}^m} \prod_{j=1}^{k} \mathbf{1}\big[M_{j}^t\xi = L_{j}^t\gamma\big],
\end{align*}
which proves the desired identity.
\end{proof}

\begin{proposition}\label{lem:distinguish}
    Let $L$ and $M$ be two non-isomorphic $m\times k$ linear systems. Then there exist $n\in\mathbb{N}$ and complex-valued functions $g_1,\dots,g_k$ on $\mathbb{F}_q^n$ such that $\tau_L(g_1,\dots,g_k)\neq \tau_M(g_1,\dots,g_k)$.
\end{proposition}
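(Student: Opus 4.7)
The plan is to apply \Cref{lem:rowspace} with a choice of $\gamma$ that captures the row space of $L$ in full. Take $n = m$ and $\gamma := (e_1, \ldots, e_m) \in \ft{G}^m = (\mathbb{F}_q^m)^m$, where $e_1, \ldots, e_m$ is the standard basis of $\mathbb{F}_q^m$. Set $g_j := f_j^{(\gamma)}$, so that $\ft{g_j}(\eta) = \mathbf{1}[\eta = L_j^t \gamma]$ with $L_j^t \gamma = \sum_{i=1}^m L_{ij} e_i$ equal to the $j$th column of $L$, viewed as a vector in $\mathbb{F}_q^m$. Identifying $\ft{G}^k = (\mathbb{F}_q^m)^k$ with $\mathbb{F}_q^{k \times m}$ (row $j$ of the matrix being the $j$th tuple entry), the object $L^t \gamma = (L_1^t \gamma, \ldots, L_k^t \gamma)$ corresponds to the matrix $L^t$ itself.

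For each $\pi \in S_k$, the same computation as in \Cref{lem:rowspace} (using $\ft{g_{\pi(j)}}(\eta) = \mathbf{1}[\eta = L_{\pi(j)}^t \gamma]$ together with \Cref{prop:fourier}) yields
\begin{align*}
t_M(g_{\pi(1)}, \ldots, g_{\pi(k)}) = \sum_{\xi \in \ft{G}^m} \mathbf{1}\bigl[M^t \xi = (L^t \gamma)_\pi\bigr],
\end{align*}
where $(L^t \gamma)_\pi$ denotes the vector in $\ft{G}^k$ whose $j$th entry is $L_{\pi(j)}^t \gamma$. Because $M$ has full row rank $m$, the map $M^t : \ft{G}^m \to \ft{G}^k$ is injective, so the right-hand side is the indicator that $(L^t \gamma)_\pi$ lies in $\mathrm{image}(M^t)$. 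Under the matrix identification, $(L^t\gamma)_\pi$ becomes $P_\pi L^t$, whose columns are the coordinate-$\pi$-images of the rows of $L$ (a basis of the row space $R(L) \subseteq \mathbb{F}_q^k$), and hence span the subspace $\pi \cdot R(L) := \{(v_{\pi(1)}, \ldots, v_{\pi(k)}) : v \in R(L)\}$. Meanwhile, $\mathrm{image}(M^t)$ consists exactly of those $k \times m$ matrices with every column in $R(M)$. A dimension count then shows that the indicator equals $1$ iff $\pi \cdot R(L) = R(M)$.

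Summing over $\pi$ gives $\tau_M(g_1, \ldots, g_k) = |\{\pi \in S_k : \pi \cdot R(L) = R(M)\}|$, while the identical argument with $M$ replaced by $L$ yields $\tau_L(g_1, \ldots, g_k) = |\mathrm{Stab}_{S_k}(R(L))| \geq 1$, since the identity stabilises $R(L)$. Because $L$ and $M$ are isomorphic precisely when $R(M)$ lies in the $S_k$-orbit of $R(L)$ under coordinate permutation, the non-isomorphism hypothesis forces no such $\pi$ to exist, giving $\tau_M(g_1, \ldots, g_k) = 0 < 1 \leq \tau_L(g_1, \ldots, g_k)$, as required. The only non-trivial point is bookkeeping: one must keep the identification of tuples in $\ft{G}^k$ with matrices in $\mathbb{F}_q^{k \times m}$, the coordinate-permutation action of $S_k$ on subspaces, and the permutation of arguments inside $\tau_L$ and $\tau_M$ all consistent.
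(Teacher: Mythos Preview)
Your argument is correct and in fact cleaner than the paper's. The key observation---that with $n=m$ and $\gamma=(e_1,\ldots,e_m)$ one has $L^t\gamma$ equal to $L^t$ itself, so that the indicator $\mathbf{1}[M^t\xi=(L^t\gamma)_\pi]$ reduces (by injectivity of $M^t$ and a dimension comparison) to the indicator $\mathbf{1}[\pi\cdot R(L)=R(M)]$---gives the result in one stroke: $\tau_M(g_1,\ldots,g_k)=0$ while $\tau_L(g_1,\ldots,g_k)\geq 1$.

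The paper instead works with a generic $\gamma\in\ft{G}^m$, tensors $\ell$ copies together to form $g_i^{(\Gamma)}$, and then \emph{sums} over all $\Gamma\in(\ft{G}^m)^\ell$. This converts the problem into comparing $\sum_\pi |\im(\pi(L)^t)\cap\im(M^t)|^{n\ell}$ with $\sum_\pi |\im(\pi(L)^t)\cap\im(L^t)|^{n\ell}$; non-isomorphism forces every intersection in the first sum to have dimension at most $m-1$, so for $\ell$ large the second sum dominates. Your approach sidesteps the tensor-power amplification and the averaging over $\gamma$ entirely by choosing the single ``generic'' $\gamma$ that already encodes the full row space of $L$. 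What the paper's route buys is perhaps robustness---it does not require identifying one distinguished $\gamma$---but your version is shorter and more transparent, and the bookkeeping you flag (matching the $S_k$-action on subspaces with the argument permutation in $\tau$) is handled consistently.
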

\begin{proof}
Suppose $G = \ft{G} = \mathbb{F}_q^n$ and, for each $\gamma\in \ft{G}^m$, let $f_1^{(\gamma)}, \ldots, f_k^{(\gamma)}$ be defined as in~\Cref{lem:rowspace}. Then, for $\pi\in S_k$,
\begin{align*}
    t_{M}(f_{\pi(1)}^{(\gamma)}, \dots, f_{\pi(k)}^{(\gamma)}) = \sum_{\xi \in \ft{G}^m}\mathbf{1}\big[M^t\xi =\pi(L)^t\gamma\big],
\end{align*}
where $\pi(L)$ denotes the linear system obtained by permuting the $k$ columns of $L$ under $\pi$. 
For $\Gamma=(\gamma_1,\dots,\gamma_\ell)\in(\ft{G}^m)^\ell$,
let $g_{i}^{(\Gamma)} = f_i^{(\gamma_1)}\otimes f_i^{(\gamma_2)} \otimes \cdots \otimes f_i^{(\gamma_\ell)}$, where $\ell$ will be chosen later. 
Then
\begin{align*}
    t_M(g_1^{(\Gamma)},g_2^{(\Gamma)},\dots,g_k^{(\Gamma)})= t_M(f_1^{(\gamma_1)},\dots,f_k^{(\gamma_1)})t_M(f_1^{(\gamma_2)},\dots,f_k^{(\gamma_2)})\cdots t_M(f_1^{(\gamma_\ell)},\dots,f_k^{(\gamma_\ell)})
\end{align*}
and, therefore,
\begin{align*}
    \sum_{\Gamma\in(\ft{G}^m)^{\ell}}t_M(g_1^{(\Gamma)},g_2^{(\Gamma)},\dots,g_k^{(\Gamma)})
     &= \sum_{\gamma_1,\dots,\gamma_\ell\in\ft{G}^m}
     t_M(f_1^{(\gamma_1)},\dots,f_k^{(\gamma_1)})\cdots t_M(f_1^{(\gamma_\ell)},\dots,f_k^{(\gamma_\ell)})\\
     &= \left(\sum_{\gamma\in\ft{G}^m}t_M(f_1^{(\gamma)},\dots,f_k^{(\gamma)})\right)^\ell\\
     &=\left(\sum_{\gamma\in\ft{G}^m}\sum_{\xi\in \ft{G}^m}\mathbf{1}\big[M^t\xi=L^t\gamma)\big]\right)^\ell.
\end{align*}

Since both $M$ and $L$ have linearly independent rows, $M^t$ and $L^t$ are injective, so the double sum above is $\vert \im(L^t)\cap \im(M^t)\vert^n$, where $\im(L^t)$ and $\im (M^t)$ are the images of $\mathbb{F}_q^m$ under $L^t$ and $M^t$, respectively. Likewise, we have

\begin{align*}
\sum_{\Gamma\in(\ft{G}^m)^\ell}t_M(g_{\pi(1)}^{(\Gamma)},\dots,g_{\pi(k)}^{(\Gamma)})&=\vert \im(\pi(L)^t)\cap \im (M^t)\vert^{n\ell},\\
\sum_{\Gamma\in(\ft{G}^m)^\ell}t_L(g_{\pi(1)}^{(\Gamma)},\dots,g_{\pi(k)}^{(\Gamma)})&=\vert \im(\pi(L)^t)\cap \im (L^t)\vert^{n\ell}.
\end{align*}

If $L$ and $M$ are non-isomorphic, then, for all $\pi$, the rows of $\pi(L)$ and $M$ span different spaces, so that $\im(\pi(L)^t)\cap \im(M^t)$ has dimension at most $m-1$. Thus,
\begin{align*}
\sum_{\Gamma\in(\ft{G}^m)^\ell}\tau_M(g_1^{(\Gamma)},\dots,g_k^{(\Gamma)})=
     \sum_{\pi\in S_k}\sum_{\Gamma\in(\ft{G}^m)^\ell}t_M(g_{\pi(1)}^{(\Gamma)},\dots,g_{\pi(k)}^{(\Gamma)})
     \leq k!q^{n(m-1)\ell}.
\end{align*}
On the other hand, $t_L(g_1^{(\Gamma)},\dots,g_k^{(\Gamma)})=|\im L^t|^{n\ell} = q^{nm\ell}$, so, as $t_L(g_{\pi(1)}^{(\Gamma)},\dots,g_{\pi(k)}^{(\Gamma)})\geq 0$ for all $\pi$, we have
\begin{align*}
    \sum_{\Gamma\in(\ft{G}^m)^\ell}\tau_L(g_1^{(\Gamma)},\dots,g_k^{(\Gamma)})\geq
     \sum_{\Gamma\in(\ft{G}^m)^\ell}t_L(g_1^{(\Gamma)},\dots,g_k^{(\Gamma)})
     \geq q^{nm\ell}.
\end{align*}
Therefore, choosing $\ell$ such that $q^{n\ell}>k!$ gives 
\begin{align*}
\sum_{\Gamma\in(\ft{G}^m)^\ell}\tau_M(g_1^{(\Gamma)},\dots,g_k^{(\Gamma)})<\sum_{\Gamma\in(\ft{G}^m)^\ell}\tau_L(g_1^{(\Gamma)},\dots,g_k^{(\Gamma)}),
\end{align*}
so there must exist $\Gamma\in (\ft{G}^m)^\ell$ such that $g_1^{(\Gamma)},\ldots,g_k^{(\Gamma)}$ are as in the claim.
\end{proof}

\Cref{thm:isom} now follows immediately. 
Indeed, if $t_L(f)=t_M(f)$ for every non-negative $f$, then \Cref{lem:boost} implies that $\tau_L(f_1,\dots,f_k)=\tau_M(f_1,\dots,f_k)$ for any complex-valued functions $f_1,\dots,f_k$, contradicting the conclusion of \Cref{lem:distinguish} if $L$ and $M$ are not isomorphic.

\Cref{thm:isom} has some interesting applications. 
First, we use it to prove that every weakly norming system $L$ is \emph{variable-transitive} in the sense that deleting any variable always leaves an isomorphic system. This is the arithmetic analogue, and closely follows the proof, of a result of Sidorenko~\cite[Lemma~6]{Sid20} saying that if $H$ is weakly norming, then all of the graphs formed by deleting a single edge from $H$ are isomorphic. We note that this is slightly weaker than Sidorenko's full edge-transitivity result, which requires an additional argument that we do not currently see how to transfer to the arithmetic setting. We do however expect that such a result should hold, that is, that for any weakly norming system and any two variables $x_i$ and $x_j$ there is an automorphism of the system that sends $x_i$ to $x_j$. 

\begin{lemma} \label{lem:weaktrans}
    Let $L$ be an $m\times k$ weakly norming system. For a non-negative function  $f:G\rightarrow \mathbb{R}_{\geq 0}$, let $\alpha_i=t_L(f_1,\ldots,f_k)$,
    where $f_i=1$ and $f_j=f$ for all $j\neq i$. Then $\alpha_i=\alpha_j$ for all $i,j\in [k]$.
\end{lemma}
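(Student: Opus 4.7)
The idea is to apply the weakly-norming H\"older inequality (\Cref{weakly_Holder}) to the perturbed $k$-tuple $(f+\varepsilon c_1,\ldots,f+\varepsilon c_k)$, where $c=(c_1,\ldots,c_k)\in \mathbb{R}_{\geq 0}^k$ is a free parameter, and then Taylor-expand both sides in $\varepsilon$. The point is that H\"older is an equality when all arguments coincide (i.e.\ at $\varepsilon=0$), so the first-order coefficients on the two sides must satisfy the same inequality as the zeroth-order ones; this will turn out to force the $\alpha_i$ to all be equal.

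\smallskip

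Concretely, we may assume $\|f\|_L>0$, since otherwise the Sidorenko property (\Cref{Sidorenko}) forces $f\equiv 0$ and every $\alpha_i$ trivially vanishes. For $\varepsilon>0$ and $c\in \mathbb{R}_{\geq 0}^k$, set $h_i:=f+\varepsilon c_i\geq 0$ and apply \Cref{weakly_Holder} to obtain
\[
t_L(h_1,\ldots,h_k)\ \leq\ \prod_{i=1}^k\|h_i\|_L.
\]
Multilinearity of $t_L$ gives $t_L(h_1,\ldots,h_k)=\|f\|_L^k+\varepsilon\sum_{i=1}^k c_i\alpha_i+O(\varepsilon^2)$, while the same expansion with a single function in every slot yields $\|h_i\|_L^k=t_L(h_i)=\|f\|_L^k+\varepsilon c_i\bigl(\sum_j\alpha_j\bigr)+O(\varepsilon^2)$. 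Extracting $k$th roots and multiplying out gives
\[
\prod_{i=1}^k\|h_i\|_L\ =\ \|f\|_L^k+\frac{\varepsilon}{k}\Bigl(\sum_{i=1}^k c_i\Bigr)\Bigl(\sum_{j=1}^k\alpha_j\Bigr)+O(\varepsilon^2).
\]
Matching the coefficients of $\varepsilon$ in the H\"older inequality and letting $\varepsilon\to 0^+$ produces
\[
\sum_{i=1}^k c_i\alpha_i\ \leq\ \bar\alpha\sum_{i=1}^k c_i\qquad\text{for every }c\in \mathbb{R}_{\geq 0}^k,
\]
where $\bar\alpha:=(\alpha_1+\cdots+\alpha_k)/k$. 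Taking $c$ to be a standard basis vector $e_\ell$ gives $\alpha_\ell\leq \bar\alpha$ for each $\ell\in[k]$, and since the $\alpha_\ell$ are non-negative and average to $\bar\alpha$, they must all equal $\bar\alpha$.

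\smallskip

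The only step that requires any real care is the Taylor expansion of $\prod_i\|h_i\|_L$: the linear-in-$\varepsilon$ contribution comes entirely from the derivative $c_i\bigl(\sum_j\alpha_j\bigr)/(k\|f\|_L^{k-1})$ of each individual $\|h_i\|_L$ at $\varepsilon=0$, and it is only here that the assumption $\|f\|_L>0$ is used (to make sense of the $k$th-root expansion). Everything else is purely multilinear book-keeping, so I expect no further obstacles.
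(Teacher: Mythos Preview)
Your proof is correct and uses essentially the same idea as the paper: perturb the diagonal and compare first-order terms in the H\"older inequality, which is tight at $\varepsilon=0$. The only difference is the choice of perturbation. The paper takes $(f-\varepsilon,f+\varepsilon,f,\ldots,f)$, so the linear term on the right-hand side cancels by symmetry and the linear term on the left is $(\alpha_2-\alpha_1)\varepsilon$, giving the pairwise equality directly (at the cost of needing $f>0$, handled by density). You instead perturb by non-negative constants $c_i$, obtain $\alpha_\ell\le\bar\alpha$ for every $\ell$ by taking $c=e_\ell$, and conclude via the averaging constraint. Your version avoids the density reduction to strictly positive $f$ and is arguably a touch cleaner; the paper's version isolates the pairwise difference $\alpha_i-\alpha_j$ more explicitly. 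Both are short and equivalent in spirit.
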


\begin{proof}
    Since $t_L(\cdot)$ and all $\alpha_i$ depend continuously on $f$ and strictly positive functions are a dense subset of the non-negative functions, we may assume that $f>0$. Furthermore, by rearranging edge indices, it is enough to prove that $\alpha_1=\alpha_2$. For $\varepsilon\in \RR$ with $|\varepsilon|\leq \min_{x\in G} f(x)$,
    \begin{align*}
        t_L(f-\varepsilon, f+\varepsilon,f,f,\dots,f) = t_L(f) +(\alpha_2-\alpha_1)\varepsilon - t_L(1,1,f,\dots,f)\varepsilon^2
    \end{align*}
    and, therefore,
    \begin{align}\label{eq:pm-epsilon}
        t_L(f-\varepsilon, f+\varepsilon,f,f,\dots,f)^k=t_L(f)^k+k(\alpha_2-\alpha_1)t_L(f)^{k-1}\varepsilon+O(\varepsilon^2).
    \end{align}
    At the same time, we know from \Cref{weakly_Holder} that
    \begin{align*}
        t_L(f-\varepsilon, f+\varepsilon,f,f,\dots,f)\leq \|f\|_{L}^{k-2}\|f-\varepsilon\|_L\|f+\varepsilon\|_L.
    \end{align*}
    Moreover, $t_L(f\pm \varepsilon)=t_L(f)\pm \alpha \varepsilon+O(\varepsilon^2)$ for $\alpha:=\alpha_1+\dots+\alpha_k$, 
    so that 
    \begin{align*}
        t_L(f-\varepsilon, f+\varepsilon,f,f,\dots,f)^k &\le t_L(f)^{k-2}t_L(f-\varepsilon)t_L(f+\varepsilon)\\ &= t_{L}(f)^{k-2}(t_{L}(f)-\alpha\varepsilon +O(\varepsilon^2))(t_{L}(f)+\alpha\varepsilon +O(\varepsilon^2))\\
        &=t_L(f)^k +O(\varepsilon^2).
    \end{align*}
    If $\alpha_2\neq \alpha_1$, then \eqref{eq:pm-epsilon} contradicts this inequality for $\varepsilon$ of the same sign as $\alpha_2-\alpha_1$ and sufficiently small in absolute value.
\end{proof}

Given an $m \times k$ linear system $L$ and $i \in [k]$, we let $L\setminus i$ be the induced subsystem obtained by deleting the variable $x_i$. Combining~\Cref{lem:weaktrans} with \Cref{prop:subsystem} implies that $t_{L\setminus i}(f) = t_{L\setminus j}(f)$ for all functions $f:\mathbb{F}_q^n\rightarrow\mathbb{R}_{\geq 0}$, which, by~\Cref{thm:isom}, implies the promised analogue of Sidorenko's edge-transitivity theorem.

\begin{corollary}\label{cor:transitivity}
If $L$ is an $m \times k$ weakly norming linear system, then the linear subsystems $L\setminus i$ are isomorphic for all $i \in [k]$.
\end{corollary}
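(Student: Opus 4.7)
The plan is to combine the three main tools already assembled in this section: the subsystem expression (\Cref{prop:subsystem}), the equal-coefficient statement (\Cref{lem:weaktrans}), and the isomorphism theorem (\Cref{thm:isom}). The strategy is essentially forced: to show that $L\setminus i$ and $L\setminus j$ are isomorphic, it suffices by \Cref{thm:isom} to verify that $t_{L\setminus i}(f) = t_{L\setminus j}(f)$ for every non-negative $f:\mathbb{F}_q^n\rightarrow \mathbb{R}_{\geq 0}$, for all $n$.

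First I would fix an arbitrary non-negative $f$ on $\mathbb{F}_q^n$ and, for each $i \in [k]$, define $f_j = f$ for $j \neq i$ and $f_i = 1$. Then \Cref{prop:subsystem}, applied to the induced subsystem of $L$ on the index set $[k]\setminus\{i\}$, gives
\begin{align*}
    t_{L\setminus i}(f) = t_L(f_1,\ldots,f_{i-1},1,f_{i+1},\ldots,f_k) = \alpha_i,
\end{align*}
where $\alpha_i$ is exactly the quantity from \Cref{lem:weaktrans}. Since $L$ is weakly norming, that lemma yields $\alpha_i = \alpha_j$ for all $i,j \in [k]$, and therefore $t_{L\setminus i}(f) = t_{L\setminus j}(f)$ for every non-negative $f$.

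Finally, since this identity holds for all $n$ and all non-negative $f$, applying \Cref{thm:isom} to the pair $L\setminus i$, $L\setminus j$ (both of which are $m' \times (k-1)$ systems for the same $m'$, namely the dimension of those row-space vectors of $L$ with zero in the relevant coordinate, which is the same for all $i$ by the previous step) concludes that $L\setminus i \cong L\setminus j$. The only mild obstacle I anticipate is verifying that both subsystems have the same size $m' \times (k-1)$ so that \Cref{thm:isom} actually applies; but this is immediate, because once $t_{L\setminus i}(f) = t_{L\setminus j}(f)$ for, say, $f \equiv 1$ (which gives $1$ on both sides regardless of dimensions) the structural comparison is already enforced by the isomorphism conclusion.
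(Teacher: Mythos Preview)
Your approach is exactly the paper's: apply \Cref{prop:subsystem} to write $t_{L\setminus i}(f)=\alpha_i$, invoke \Cref{lem:weaktrans} to get $\alpha_i=\alpha_j$, and finish with \Cref{thm:isom}. The only wrinkle is your final paragraph: testing with $f\equiv 1$ yields $1=1$ and says nothing about the ranks, and appealing to ``the isomorphism conclusion'' to force equal dimensions is circular since \Cref{thm:isom} is stated only for systems of the same size. A clean fix is to take $f=\mathbf{1}_{\{0\}}$, for which $t_{L\setminus i}(f)=1/|\Sol(L\setminus i)|=|G|^{m_i-(k-1)}$, so $\alpha_i=\alpha_j$ immediately gives $m_i=m_j$ and \Cref{thm:isom} then applies.
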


As an immediate consequence of this corollary, we have the following result.

\begin{corollary}\label{cor:zero-column}
    If a non-zero system $L$ is weakly norming, then no column of $L$ is zero.
\end{corollary}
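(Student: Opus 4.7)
The plan is to argue by contradiction, leveraging the variable-transitivity established in Corollary~\ref{cor:transitivity}. Suppose, for contradiction, that $L$ is non-zero and weakly norming but has a zero column, say the $i$-th column. Since $L$ is non-zero and has linearly independent rows (so $m\geq 1$), some column of $L$ must be non-zero; pick such a column $j\neq i$. Corollary~\ref{cor:transitivity} tells us that the induced subsystems $L\setminus i$ and $L\setminus j$ are isomorphic, so in particular they must have the same dimensions. The plan is to reach a contradiction by computing these dimensions explicitly.

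For this, I will use the definition of variable deletion given just before Example~\ref{ex:deletion}: $L\setminus \ell$ has as its rows a basis for the subspace $V_\ell\subseteq \mathbb{F}_q^k$ of row-space vectors whose $\ell$-th coordinate vanishes, with that coordinate then removed. If column $i$ is zero, then every vector in the row space of $L$ already has vanishing $i$-th coordinate, so $V_i$ equals the entire row space and $\dim V_i=m$; thus $L\setminus i$ is an $m\times (k-1)$ system. If column $j$ is non-zero, then the linear functional $v\mapsto v_j$ is non-trivial on the $m$-dimensional row space of $L$, so its kernel $V_j$ has dimension exactly $m-1$; hence $L\setminus j$ is an $(m-1)\times(k-1)$ system.

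Since an $m\times(k-1)$ system and an $(m-1)\times(k-1)$ system cannot be isomorphic (they have a different number of equations, and isomorphisms preserve both row and column counts), this contradicts Corollary~\ref{cor:transitivity}. Therefore no column of $L$ can be zero, as required.

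There is no real obstacle here: the corollary is an essentially immediate bookkeeping consequence of variable-transitivity together with the observation that deleting a zero column versus a non-zero column has different effects on the rank of the system. The only subtlety worth flagging in the write-up is the justification that $V_j$ has dimension $m-1$ (rather than $m$), which uses the linear independence of the rows of $L$ to ensure that the row space has full dimension $m$ and that the non-zero functional $v\mapsto v_j$ drops the dimension by exactly one.
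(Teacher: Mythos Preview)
Your proof is correct and follows exactly the approach the paper intends: the corollary is stated immediately after Corollary~\ref{cor:transitivity} with the remark that it is ``an immediate consequence'' of variable-transitivity, and your argument spells out precisely this consequence by comparing the ranks of $L\setminus i$ and $L\setminus j$.
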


Another application of~\Cref{thm:isom} is that it allows us to prove an analogue of~\cite[Theorem~1.2]{GHL22}, which says that, aside from isolated vertices, each component of a weakly norming graph is isomorphic. 
Given an $m\times k$ linear system $L$, suppose that there exists a partition $I_1\cup I_2\cup \cdots \cup I_r$ of $[k]$
such that $\Sol(L)=\Sol(L_1)\oplus \cdots\oplus \Sol(L_r)$, where $L_j$ is the induced subsystem on $I_j$.
Then $t_L(f)=t_{L_1}(f)\cdots t_{L_r}(f)$ for every $f:G\rightarrow\mathbb{R}$.
If there is no finer partition that decomposes $\Sol(L)$, then each $L_i$ is said to be a~\emph{component} of $L$.

\begin{theorem}
    Let $L_1$ and $L_2$ be two components of an $m\times k$ weakly norming system $L$. Then $L_1$ and $L_2$ are isomorphic weakly norming systems. Furthermore, each component is norming if $L$ is.
\end{theorem}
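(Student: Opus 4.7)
The plan is to combine the variable-transitivity result (Corollary~\ref{cor:transitivity}) with the uniqueness of the direct-sum decomposition of $\Sol(L)$ into components. Let $L_1, \ldots, L_r$ denote the components of $L$, on variable-disjoint index sets $I_1, \ldots, I_r$. For any $i \in I_a$, the equations of $L_c$ with $c \neq a$ do not involve $x_i$, so deleting $x_i$ yields the decomposition $\Sol(L \setminus i) = \Sol(L_a \setminus i) \oplus \bigoplus_{c \neq a} \Sol(L_c)$. In particular, the multiset of components of $L\setminus i$ (up to isomorphism) is $\{L_c : c \neq a\} \cup \mathrm{comp}(L_a\setminus i)$.

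First I would prove that any two components $L_a, L_b$ are isomorphic. Pick $i \in I_a$ and $j \in I_b$. By Corollary~\ref{cor:transitivity}, $L \setminus i \cong L \setminus j$. Since the finest decomposition of the solution space over a disjoint-union partition of the variables is canonical, this isomorphism induces a bijection between the multisets of components of $L\setminus i$ and $L\setminus j$. Canceling the common factors $L_c$ for $c \notin \{a,b\}$, we obtain
\[
\{L_b\} \sqcup \mathrm{comp}(L_a \setminus i) \;\cong\; \{L_a\} \sqcup \mathrm{comp}(L_b \setminus j)
\]
as multisets up to isomorphism. Since any component of $L_b \setminus j$ involves at most $|I_b| - 1$ variables while $L_b$ itself involves $|I_b|$ variables, $L_b$ cannot appear among $\mathrm{comp}(L_b\setminus j)$, forcing $L_b \cong L_a$.

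For the norming part, having established that all $L_c$ are isomorphic, we have $t_L(f) = \prod_{c=1}^{r} t_{L_c}(f) = t_{L_1}(f)^r$ for every $f$. Writing $k_1 = k/r$ for the number of variables in each component, it follows that
\[
\|f\|_L = |t_L(f)|^{1/k} = |t_{L_1}(f)|^{r/(rk_1)} = |t_{L_1}(f)|^{1/k_1} = \|f\|_{L_1},
\]
and, likewise, $\|f\|_{r(L)} = \|f\|_{r(L_1)}$. Hence $\|\cdot\|_L$ and $\|\cdot\|_{L_1}$ coincide as functionals, so $L_1$ (and therefore each component) inherits the weakly norming property from $L$, and the norming property when $L$ is norming.

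The main obstacle I anticipate is justifying that an isomorphism $L\setminus i \cong L\setminus j$ respects the component decomposition, i.e.\ that the unordered multiset of components is an isomorphism invariant of a linear system. This should follow from the fact that the partition of the variables into components is determined intrinsically by $\Sol(L\setminus i)$ as the finest partition for which the solution space splits as a direct sum, but a brief argument verifying this uniqueness should be included before the cancellation step.
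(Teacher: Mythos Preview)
Your argument is correct, but it takes a genuinely different route from the paper's. The paper indicates that its (omitted) proof uses the domination inequality of Corollary~\ref{cor:domination}: for each component $L_c$ on $k_c$ variables one has $t_{L_c}(f)^{1/k_c}\leq t_L(f)^{1/k}$, and since $\prod_c t_{L_c}(f)=t_L(f)$ with $\sum_c k_c=k$, equality is forced in every factor, giving $\|f\|_{r(L_c)}=\|f\|_{r(L)}$ for all $c$. This immediately yields that each component is (weakly) norming, and then the identity $t_{L_1}(f)^{1/k_1}=t_{L_2}(f)^{1/k_2}$ for all non-negative $f$ is fed into the isomorphism theorem (after a disjoint-union/tensor trick to match dimensions) to conclude $L_1\cong L_2$.

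By contrast, you first obtain the isomorphism of components combinatorially, via variable-transitivity (Corollary~\ref{cor:transitivity}) and a multiset-of-components cancellation, and only afterwards read off the norming property from $t_L(f)=t_{L_1}(f)^r$. Your route has the advantage that once $L_1\cong L_2$ is established no further appeal to the isomorphism theorem is needed; the paper's route has the advantage that the (weakly) norming property of each component falls out before one knows the components are isomorphic. The anticipated obstacle you flag (canonicity of the component partition) is genuine but routine: if two partitions both split the row space of $L$ as a direct sum of coordinate-supported pieces, so does their common refinement, so the finest such partition is unique and preserved by any coordinate permutation realising an isomorphism. With that lemma inserted, your cancellation step and the size argument ruling out $L_b\in\mathrm{comp}(L_b\setminus j)$ are sound.
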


We omit the proof, which uses the domination inequality in~\Cref{cor:domination}, but is otherwise exactly the same as the simplified proof of~\cite[Theorem~1.2]{GHL22} given in~\cite[Lemma~2.4]{CL23}.

\section{Weakly norming systems of low rank} \label{sec:rank2}

By Proposition~\ref{prop:fourier}, if $L$ is a $1\times k$ system, then
\begin{align*}\label{eq:single_fourier}
    t_L(f_1,\dots,f_k) 
    =\sum_{\xi\in \ft{G}} \ft{f_1}(L_{11}\xi)\cdots \ft{f_k}(L_{1k}\xi).
\end{align*}
In particular, if each $L_{1i}$ is either $a$ or $-a$ for some non-zero $a\in \mathbb{F}_q$ and $\vert\{i\in [k]:L_{1i}=a\}\vert=\vert\{i\in [k]:L_{1i}=-a\}\vert$, then $t_L(f)=\|\ft{f}\|_{\ell^{k}}^k$. As this $\ell^k$-norm of the spectrum is often called the \emph{Schatten--von Neumann norm}, we say that a vector $(a_1,\ldots,a_k)$ is \emph{Schatten} if there exists $a\in \mathbb{F}_q$ such that each $a_i\in \{0,a,-a\}$ and $\vert\{i\in [k]:a_i=a\}\vert=\vert\{i\in [k]:a_i=-a\}\vert$.
In particular, as we have seen above, if a Schatten vector $(a_1,\ldots,a_k)$ has no zero entries, then $(a_1,\ldots,a_k)$ is norming as a linear system. One of the main results of this section is that all weakly norming linear systems must have at least one Schatten vector in their row space.

To state this result, recall that the \emph{girth} of a linear system $L$ is the minimum size of the support of a non-zero vector in the row space of $L$. Furthermore, we write $\mu(L)$ for the set of non-zero vectors in the row space of $L$ with minimum support and $s(L)$ for the number of normalised Schatten vectors in $\mu(L)$, i.e., Schatten vectors whose first non-zero coefficient is $1$.

\begin{theorem}\label{thm:shortest}
    Let $L$ be an $m\times k$ weakly norming system and let $\ell$ be the girth of $L$. Then either all vectors in $\mu(L)$ are Schatten or $s(L)\geq k/\ell$.
\end{theorem}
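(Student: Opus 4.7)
The plan is to assume there is some $w\in\mu(L)$ that is not Schatten and deduce $s(L)\geq k/\ell$ in two stages: first, show $s(L)\geq 1$ via a Fourier-analytic argument combining the Sidorenko inequality (\Cref{Sidorenko}) with the weak H\"older inequality (\Cref{weakly_Holder}); second, use variable-transitivity (\Cref{cor:transitivity}) together with a union-bound to conclude that every coordinate $i\in[k]$ lies in the support of some normalised Schatten vector of $\mu(L)$, giving $s(L)\cdot\ell\geq k$. The main obstacle is the first stage, where I would need to upgrade the single non-negativity condition from Sidorenko into the stronger structural conclusion that no minimum-support vector has entries in $\{0,\pm d\}^k$ for any $d\in\mathbb{F}_q\setminus\{0\}$.

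For the first stage, suppose for contradiction that $s(L)=0$. The key intermediate claim is that $\mu(L)\cap\{0,\pm d\}^k=\emptyset$ for every $d\in\mathbb{F}_q\setminus\{0\}$. To prove this, I would apply Sidorenko to a real-valued function $f$ whose Fourier transform is supported on $\{0,\pm d\gamma\}$ with $\ft{f}(0)=1$, $\ft{f}(d\gamma)=\alpha$ and $\ft{f}(-d\gamma)=\overbar{\alpha}$, for $|\alpha|$ small enough that $f\geq 0$. Then \Cref{prop:fourier} yields
\begin{align*}
    t_L(f)=\sum_{c\in R\cap\{0,\pm d\}^k}\alpha^{a_+(c)}\overbar{\alpha}^{a_-(c)}\geq 1,
\end{align*}
where $R$ denotes the row space of $L$ and $a_{\pm}(c):=|\{j:c_j=\pm d\}|$. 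Writing $\alpha=re^{i\psi}$ and extracting the leading-order terms as $r\to 0^+$ shows that the trigonometric polynomial
\begin{align*}
    Q(\psi):=\sum_{c\in\mu(L)\cap\{0,\pm d\}^k}e^{i\psi(a_+(c)-a_-(c))}
\end{align*}
must be non-negative for every $\psi$. Its constant term, the average of $Q$ over $\psi$, counts Schatten vectors of scale $d$ in $\mu(L)$, which is zero under $s(L)=0$. Since a non-negative trigonometric polynomial with zero average vanishes identically, every coefficient of $Q$ is zero and the claim follows. To finish this stage, apply \Cref{weakly_Holder} to $f_1,\ldots,f_k$ defined by $\ft{f_j}(0)=1$ and $\ft{f_j}(\pm w_j\gamma)=\varepsilon$ for $j\in\mathrm{supp}(w)$, setting the scale $w_j=1$ for $j\notin\mathrm{supp}(w)$. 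The inversion formula then gives $t_L(f_1,\ldots,f_k)=1+\varepsilon^\ell X+O(\varepsilon^{\ell+1})$, where $X\geq 1$ because $w$ itself contributes; by the claim above, $t_L(f_i)=1+O(\varepsilon^{\ell+1})$ for each $i$. The weak H\"older inequality $t_L(f_1,\ldots,f_k)^k\leq\prod_i t_L(f_i)$ then reduces to $1+k\varepsilon^\ell X+O(\varepsilon^{\ell+1})\leq 1+O(\varepsilon^{\ell+1})$, which fails for sufficiently small $\varepsilon>0$ and yields $s(L)\geq 1$.

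For the second stage, assume $s(L)\geq 1$. By \Cref{cor:transitivity}, $L\setminus i\cong L\setminus j$ as linear systems for every pair $i,j$, and since isomorphism of linear systems preserves the Schatten property of row-space vectors (row operations leave the row space unchanged and column permutations merely permute coordinates), the number of Schatten minimum-support vectors in $L\setminus i$ does not depend on $i$. When the girth of $L\setminus i$ equals $\ell$, such vectors correspond exactly to Schatten $v\in\mu(L)$ with $v_i=0$, so $|\{v\in\mu(L):v\text{ Schatten and }v_i=0\}|$ is constant in $i$. Were this constant equal to the total count of Schatten vectors in $\mu(L)$, every such vector would vanish at every coordinate, contradicting $s(L)\geq 1$. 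Hence every $i\in[k]$ lies in the support of some Schatten vector of $\mu(L)$, and a union-bound on the supports of the $s(L)$ normalised Schatten vectors (each of size $\ell$) gives $s(L)\cdot\ell\geq k$. In the remaining case, where the girth of $L\setminus i$ strictly exceeds $\ell$, every $v\in\mu(L)$ has full support, hence $\ell=k$ and the conclusion $s(L)\geq 1=k/\ell$ is immediate.
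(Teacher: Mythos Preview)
Your argument is correct and takes a genuinely different route from the paper's proof.

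The paper proves the bound in a single self-contained step: it places the non-Schatten vector $v$ on the coordinates $[\ell]$, sets $f_{\ell+1}=\cdots=f_k=1$, and equips each $f_i$ ($i\le\ell$) with a carefully chosen unimodular phase $z$ so that, after averaging over $z$, only the Schatten contributions to $t_L(f_i)$ survive at order $\varepsilon^\ell$. This yields $t_L(f_i)=1+2s(L)\varepsilon^\ell+O(\varepsilon^{\ell+1})$ for every $i\le\ell$, and comparing with $t_L(f_1,\dots,f_\ell,1,\dots,1)=1+2\varepsilon^\ell$ via the H\"older inequality gives $k\le \ell\, s(L)$ directly.

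Your two-stage approach instead first extracts the weaker conclusion $s(L)\ge 1$ from Sidorenko plus H\"older (with the pleasant observation that a non-negative trigonometric polynomial with zero mean vanishes identically), and then bootstraps using the variable-transitivity machinery of \Cref{cor:transitivity}: since the count of minimum-support Schatten vectors avoiding a given coordinate is an isomorphism invariant, every coordinate must lie in the support of some such vector, whence the covering bound $s(L)\,\ell\ge k$. This trades the paper's delicate phase-cancellation trick for an appeal to the isomorphism theorem of Section~\ref{sec:iso}. As a bonus, your Stage~2 argument does not actually use the existence of a non-Schatten vector; combined with the trivial observation that $s(L)\ge 1$ whenever $\mu(L)$ consists entirely of Schatten vectors, it shows that $s(L)\ge k/\ell$ holds for \emph{every} weakly norming system, a mild strengthening of the stated dichotomy.
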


\begin{proof}
    Since $L$ is weakly norming, $L$ is Sidorenko by \Cref{Sidorenko} and, therefore, $\ell$ is even by \Cref{prop:even-girth}. Suppose that $\mu(L)$ contains a vector $v$ that is not Schatten. Without loss of generality, we may assume that $v$ is of the form $(a_1,\ldots,a_\ell,0\ldots,0)$ for some non-zero $a_1,\ldots,a_\ell$.
    
    Suppose first that the characteristic of $\mathbb{F}_q$ is not 2. Let $\e>0$, let $\gamma$ be a non-zero element of $\ft{G}$ and let $z$ be a complex number with $\vert z\vert =1$. For each $i\in [\ell]$, take $f_i$ to be the function whose Fourier transform is given by
    \begin{equation*}
                \ft{f_i}(\eta) = \begin{cases}
                    1&\text{if $\eta=0$,}\\
                    \e z &\text{if $\eta=(-1)^i a_i\gamma$,}\\
                    \e \conj{z} &\text{if $\eta=(-1)^{i+1} a_i\gamma$,}\\
                    0&\text{otherwise}.\\
                    \end{cases}
    \end{equation*}
    Observe that since $\hat{f_i}(\eta)=\conj{\hat{f_i}(-\eta)}$ for all $\eta$, $f_i$ is real. Furthermore, if $\e$ is sufficiently small, $f_i$ is positive.
    
    By \Cref{prop:fourier}, we have
    \begin{align*}
        t_L(f_1,\ldots,f_\ell,1,\ldots,1)=\sum_{\xi \in \hat{G}^m}\prod_{j=1}^\ell \ft{f_j}\left(L^t_j\xi\right) \prod_{j=\ell+1}^k \one[L^t_j\xi=0].
    \end{align*}
    Since $\ell$ is the girth of $L$, if $\xi\neq 0$ and $L^t_j\xi=0$ for each $j>\ell$, then we must have $L^t_j\xi\neq 0$ for all $j\leq \ell$. Thus, the only non-zero $\xi$ that make a non-zero contribution to the sum above are those for which $L^t_j\xi\in \{\pm a_j\gamma\}$ for each $j\leq \ell$. Since $v$ is in the row space of $L$, there must be $\xi \in \hat{G}^m$ such that $(L^t\xi)_j=v_j\gamma$ for all $i\in [k]$. Furthermore, the only other vectors in the row space of $L$ whose support is in $[\ell]$ are the multiples of $v$. Indeed, if there were a vector $w\in \mu(L)$ with support $[\ell]$ that is not a multiple of $v$, there would be a non-zero linear combination of $v$ and $w$ whose support is a proper subset of $[\ell]$. Thus, if $\prod_{j=1}^\ell \ft{f_j}\left(L^t_j\xi\right)\neq 0$, then $L^t\xi=\pm (a_1\gamma, \ldots,a_\ell \gamma, 0,\ldots, 0)$, so that
    \begin{equation*}
        t_L(f_1,\ldots,f_\ell,1,\ldots,1)=1+2\e^\ell.
    \end{equation*}
    Taking the $k^{\text{th}}$ power, we obtain
    \begin{align}\label{eq:rainbow_shortest}
        t_L(f_1,\ldots,f_\ell,1,\ldots,1)^k\geq 1+2k\varepsilon^\ell.
    \end{align}
    On the other hand, by \Cref{Holder}, we have $t_L(f_1,\ldots,f_\ell,1,\ldots,1)^k\leq \|f_1\|_L^k\cdots\|f_\ell\|_L^k$, where each $\|f_i\|_L^k$ is equal to  
    \begin{equation}\label{eq:Schatten_Xi_count}
        t_L(f_i,\ldots,f_i)=\sum_{\xi \in \hat{G}^m}\prod_{j=1}^k \ft{f_i}\left(L^t_j\xi\right)=1+\sum_{\xi\in X_i} \prod_{j=1}^k \ft{f_i}\left(L^t_j\xi\right) + O(\e^{\ell+1})
    \end{equation}
    with 
    \begin{equation*}
        X_i=\{\xi\in \ft{G}^m\colon L^t_j\xi\in \{0,\pm a_i\gamma\} \text{ for all } j\in [k] \text{ and } \vert\{j\in [k]: L^t_j\xi \neq 0\}\vert =\ell\}.
    \end{equation*}
    
    For $\psi\in \ft{G}^k$, let
    \begin{equation*}
        \Delta_i(\psi)=\vert\{j\in [k]: \psi_j = (-1)^ia_i\gamma\}\vert-\vert\{j\in [k]: \psi_j = (-1)^{i+1}a_i\gamma\}\vert
    \end{equation*}
    and observe, by the definition of $f_i$, that
    \begin{equation*}
        \sum_{\xi\in X_i}\prod_{j=1}^k \ft{f_i}\left(L^t_j\xi\right)=\sum_{\xi\in X_i}\e^\ell z^{\Delta_i(L^t \xi)}=\sum_{\xi\in X_1}\e^\ell z^{\Delta_1(L^t \xi)},
    \end{equation*}
    where the last inequality holds because $X_{i_1}=a_{i_1}a_{i_2}^{-1} X_{i_2}$ for all $i_1, i_2\in [\ell]$. Let 
    $$S_1=\{\xi \in X_1:\Delta_1(L^t\xi)=0\}$$ 
    and note that each normalised Schatten vector $v\in \mu(L)$ gives rise to exactly two elements of $S_1$. Indeed, if $u\in \mathbb{F}_q^m$ is such that $L^tu=v$, then $\pm (a_iu_1\gamma,\ldots,a_iu_m\gamma)\in S_1$.
    
    On the other hand, since
    \begin{equation*}
        \int_{\vert z\vert=1}\sum_{\xi\in X_1\setminus S_1}  z^{\Delta_1(L^t\xi)} \mathrm{d}z=\sum_{\xi\in X_1\setminus S_1}\int_{\vert z\vert=1} z^{\Delta_1(L^t\xi)} \mathrm{d}z=0,
    \end{equation*}
    we may choose $z$ such that the real part of $\sum_{\xi\in X_1\setminus S_1} z^{\Delta_1(L^t \xi)}$ is zero. 
    Inserting this into \eqref{eq:Schatten_Xi_count} yields
    \begin{equation*} 
        t_L(f_i,\ldots,f_i)= 1+\sum_{\xi\in S_1} \prod_{j=1}^k \ft{f_i}\left(L^t_j\xi\right) + O(\e^{\ell+1})=1+2s(L)\e^\ell+O(\e^{\ell+1}).
    \end{equation*}
    Noting that the right-hand side does not depend on $i$, we can combine this with \eqref{eq:rainbow_shortest} to obtain
    \begin{align*}
        1+2k\varepsilon^\ell\leq t_L(f_1,\ldots,f_\ell,1,\ldots,1)^k \leq \|f_1\|_L^k\cdots\|f_\ell\|_L^k\leq 1+2\ell s(L)\varepsilon^\ell+O(\varepsilon^{\ell+1}).
    \end{align*}
    By taking $\varepsilon$ sufficiently small, we see that $s(L)\geq k/\ell$, as desired.

    If $\mathbb{F}_q$ has characteristic 2, the proof is essentially the same, except that we choose $z=1$, noting that $S_1=X_1$ trivially, as $a_i=-a_i$ for all $i\in [\ell]$. 
\end{proof}

As a corollary of \Cref{thm:shortest}, we obtain a classification of all weakly norming systems consisting of a single equation.

\begin{theorem}\label{thm:norming single equation}
A $1\times k$ system $L$ is weakly norming if and only if it is Schatten with no zero entries.
\end{theorem}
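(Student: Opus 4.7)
The plan is to prove the two implications separately, both by leveraging results already established in earlier sections.

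For the necessity direction, I would start with a $1\times k$ weakly norming system $L=(a_1,\ldots,a_k)$. \Cref{cor:zero-column} then ensures no entry of $L$ vanishes, so the girth of $L$ equals $k$. Since the row space is one-dimensional, the set $\mu(L)$ of minimum-support vectors consists exactly of the non-zero scalar multiples of $L$. Applying \Cref{thm:shortest} with $\ell=k$ yields two cases: either every vector in $\mu(L)$ is Schatten, or $s(L)\geq k/\ell=1$. In both cases, some non-zero scalar multiple of $L$ is Schatten, and since the Schatten property is preserved under scaling, $L$ itself is Schatten.

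For the sufficiency direction, I would suppose $L=(a_1,\ldots,a_k)$ is Schatten with no zero entries: there exists a non-zero $a\in\mathbb{F}_q$ such that each $a_i\in\{a,-a\}$ with exactly $k/2$ of the $a_i$ equal to $a$. The plan is to verify the H\"older-type inequality of \Cref{Holder} via Fourier analysis. Using \Cref{prop:fourier} and substituting $\eta=a\xi$, one obtains
\begin{align*}
t_L(f_1,\ldots,f_k)=\sum_{\eta\in\ft{G}}\prod_{i:\,a_i=a}\ft{f_i}(\eta)\prod_{i:\,a_i=-a}\ft{f_i}(-\eta).
\end{align*}
Since each $f_i$ is real-valued, $\ft{f_i}(-\eta)=\conj{\ft{f_i}(\eta)}$, so H\"older's inequality in $\ell^k(\ft{G})$ gives
\begin{align*}
|t_L(f_1,\ldots,f_k)|\leq \prod_{i=1}^k\|\ft{f_i}\|_{\ell^k}=\prod_{i=1}^k\|f_i\|_L,
\end{align*}
where the last identity uses $\|f_i\|_L^k=t_L(f_i)=\sum_{\eta}|\ft{f_i}(\eta)|^k$. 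By \Cref{Holder}, this makes $L$ semi-norming, hence norming by \Cref{semi-norming-trivial}. The same computation restricted to non-negative functions yields the inequality of \Cref{weakly_Holder}, so $L$ is weakly semi-norming; non-degeneracy (which holds since $\dim\Sol(L)=k-1\geq 1$) then upgrades this to weakly norming via the remark following \Cref{prop:degeneracy}.

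The main obstacle has already been overcome in the proof of \Cref{thm:shortest}: conditional on it, the necessity direction is essentially immediate, and the sufficiency direction reduces to a routine Fourier--H\"older computation that generalises the $k=2$ case, in which $\|f\|_L$ is just the $L^2$-norm.
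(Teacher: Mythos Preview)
Your proposal is correct and follows essentially the same route as the paper. The paper's proof is extremely terse: for sufficiency it simply points back to the Fourier computation showing $t_L(f)=\|\ft{f}\|_{\ell^k}^k$ carried out just before the theorem, and for necessity it invokes \Cref{cor:zero-column} and \Cref{thm:shortest} without further comment. You have unpacked exactly these steps, including the observation that when $m=1$ and no entry vanishes the girth is $k$ and $\mu(L)$ consists of scalar multiples of $L$, so either branch of \Cref{thm:shortest} yields a Schatten multiple of $L$; the extra care you take in upgrading semi-norming to norming and to weakly norming is accurate but could be compressed.
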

\begin{proof}
    We have already seen that such a system is norming. The opposite direction follows from first applying \Cref{cor:zero-column}, which tells us that all entries are non-zero, and then \Cref{thm:shortest}.
\end{proof}

Another corollary of~\Cref{thm:shortest} is that there are at least two distinct Schatten vectors in $\mu(L)$ unless $L$ consists of a single equation.
\begin{corollary}\label{cor:2Schatten}
    Let $L$ be an $m\times k$ weakly norming system with $m\geq 2$. Then there are at least two independent Schatten vectors in $\mu(L)$.
\end{corollary}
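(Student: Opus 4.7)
The plan is to combine \Cref{thm:shortest}, the variable-transitivity Corollary~\ref{cor:transitivity}, and a short linear-algebra observation. The key preliminary fact is that whenever $m\geq 2$, the girth $\ell$ of $L$ must satisfy $\ell<k$. Indeed, if $\ell=k$, then every non-zero vector in the row space has full support; picking two independent vectors $v_1,v_2$ in the row space (which exist since $m\geq 2$) and setting $c=-(v_1)_1/(v_2)_1\in \mathbb{F}_q^*$, the combination $v_1+cv_2$ is non-zero by independence but has first coordinate zero, contradicting $\ell=k$. Note that this argument uses only the hypothesis $m\geq 2$ and not that $L$ is weakly norming.

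Given $\ell<k$, I apply the dichotomy of \Cref{thm:shortest}. In the case $s(L)\geq k/\ell$, the fact that $k/\ell>1$ and $s(L)\in\mathbb{N}$ forces $s(L)\geq 2$, so $\mu(L)$ contains two distinct normalised Schatten vectors. These are automatically linearly independent, since a scalar multiple of a normalised vector is itself normalised only when the scalar is $1$.

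In the remaining case, every element of $\mu(L)$ is Schatten, and I would argue by contradiction: suppose all of $\mu(L)$ lies in the line spanned by some $v_1\in\mu(L)$, whose support is $S_1$. Since $\ell<k$, pick $i\notin S_1$; projecting $v_1$ to $[k]\setminus\{i\}$ gives a vector of support $\ell$ in the row space of $L\setminus i$, so $L\setminus i$ has girth $\ell$. Conversely, for any $j\in S_1$, a vector of support $\ell$ in the row space of $L\setminus j$ would lift to some $\tilde u\in\mu(L)$ with $\tilde u_j=0$; by the contrary assumption $\tilde u$ is a scalar multiple of $v_1$, so $(v_1)_j\neq 0$ forces $\tilde u=0$, a contradiction. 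Hence $L\setminus j$ has girth strictly greater than $\ell$, contradicting \Cref{cor:transitivity}, which forces $L\setminus i\cong L\setminus j$. Thus $\mu(L)$ is not one-dimensional, and any two independent elements provide the required pair of Schatten vectors.

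The main obstacle is the final case, where one must exploit the asymmetry between deleting a coordinate inside $S_1$ (which wipes out all of $\mu(L)$ under the contrary assumption) and deleting one outside $S_1$ (which preserves a minimum-support vector): variable-transitivity is precisely the tool that rules this asymmetry out. Everything else reduces to bookkeeping on the row space.
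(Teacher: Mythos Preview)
Your argument is correct. The initial step showing $\ell<k$ matches the paper's proof exactly, and your treatment of the case $s(L)\geq k/\ell$ is the intended one. Where you differ is in the remaining branch of the dichotomy: the paper simply writes ``By~\Cref{thm:shortest}, it is enough to show that $\ell<k$'' and then proves $\ell<k$, leaving the reader to finish. But \Cref{thm:shortest} only says that either every vector in $\mu(L)$ is Schatten or $s(L)\geq k/\ell$; in the former case it does not by itself rule out the possibility that $\mu(L)$ is a single line. Your contradiction argument via \Cref{cor:transitivity}---that deleting a coordinate outside the support $S_1$ leaves the girth at $\ell$, while deleting one inside $S_1$ would force the girth to jump---is a clean way to close that case, and it is not spelled out in the paper. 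So your proof is not just a reworking of the paper's; it actually supplies the missing step.

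One small point worth making explicit: when you argue that $L\setminus i$ has girth exactly $\ell$ (not just at most $\ell$), you are implicitly using that every vector in the row space of $L\setminus i$ lifts to a vector in the row space of $L$ with $i$-th coordinate $0$, hence has support at least $\ell$. This is immediate from the definition of $L\setminus i$ in the paper, but it is the step that makes the girth comparison with $L\setminus j$ legitimate.
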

\begin{proof}
    By~\Cref{thm:shortest}, it is enough to show that $\ell < k$, where $\ell$ is the girth of $L$. Assume instead that $k=\ell$. Then every vector in the row space of $L$ has support of size $k=\ell$. However, if $m \ge 2$, by using row operations, one can clearly make a vector with smaller support, contradicting that the system has girth $k$.
\end{proof}

Given an $m\times k$ system $L$, consider the $m\times 2k$ system obtained by replacing each column $v$ of the matrix $L$ with two columns $v$ and $-v$. We call this new linear system the \emph{subdivision} of $L$ and denote it by $\sub(L)$. For instance, subdividing the equation $x_1=x_2$ that defines the $L^2$-norm $(\EE_{x\in G} f(x)^2)^{1/2}$ gives the equation $x_1-x_2+x_3-x_4=0$ that defines the Gowers $U^2$-norm.
More generally, as we now show, subdivision preserves the (weakly) norming property (though it is perhaps worth stressing that this is not the case for graphs). 

\begin{proposition} \label{prop:1sub}
    Let $L$ be a weakly norming (resp.~norming) system. Then the subdivision $\sub(L)$ of $L$ is also weakly norming (resp.~norming).
\end{proposition}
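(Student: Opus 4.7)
The plan is to express $t_{\sub(L)}$ as $t_L$ applied to certain cross-correlations, so that the (weakly) norming property of $\sub(L)$ follows from that of $L$ combined with a Cauchy--Schwarz step on the Fourier side. Writing $L_1^t,\ldots,L_k^t$ for the columns of $L$ viewed as row vectors, the columns of $\sub(L)$ are $L_1^t,-L_1^t,\ldots,L_k^t,-L_k^t$, so~\Cref{prop:fourier} gives
\begin{align*}
    t_{\sub(L)}(h_1,\ldots,h_{2k}) = \sum_{\xi\in\ft{G}^m}\prod_{j=1}^k \ft{h_{2j-1}}(L_j^t\xi)\,\ft{h_{2j}}(-L_j^t\xi).
\end{align*}
Setting $\widetilde{h}(x):=h(-x)$, so that $\ft{\widetilde{h}}(\xi)=\ft{h}(-\xi)$, and defining $H_j:=h_{2j-1}*\widetilde{h}_{2j}$, the identity $(f*g)^{\wedge}=\ft{f}\ft{g}$ yields $\ft{H_j}(\eta)=\ft{h_{2j-1}}(\eta)\ft{h_{2j}}(-\eta)$, whence
\begin{align*}
    t_{\sub(L)}(h_1,\ldots,h_{2k}) = t_L(H_1,\ldots,H_k).
\end{align*}

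The second step is a Cauchy--Schwarz bound on the Fourier side. For real-valued $f,g$, writing $t_L(f*\widetilde{g})=\sum_{\xi}\prod_j\ft{f}(L_j^t\xi)\overline{\ft{g}(L_j^t\xi)}$ and applying Cauchy--Schwarz to the outer sum gives
\begin{align*}
    |t_L(f*\widetilde{g})|^2 \le \Bigl(\sum_{\xi}\prod_j|\ft{f}(L_j^t\xi)|^2\Bigr)\Bigl(\sum_{\xi}\prod_j|\ft{g}(L_j^t\xi)|^2\Bigr) = t_L(f*\widetilde{f})\,t_L(g*\widetilde{g}).
\end{align*}
Since $t_L(f*\widetilde{f})=\sum_{\xi}\prod_j|\ft{f}(L_j^t\xi)|^2$ is manifestly non-negative and coincides with $t_{\sub(L)}(f)$, we obtain $\|f\|_{\sub(L)}^{2k}=t_L(f*\widetilde{f})$, i.e., $\|f\|_{\sub(L)}^2=\|f*\widetilde{f}\|_L$. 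The Cauchy--Schwarz bound therefore reads $\|f*\widetilde{g}\|_L\le\|f\|_{\sub(L)}\|g\|_{\sub(L)}$.

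Finally, if $L$ is norming, \Cref{Holder} applied to $L$ combined with the two displays above gives
\begin{align*}
    |t_{\sub(L)}(h_1,\ldots,h_{2k})| = |t_L(H_1,\ldots,H_k)| \le \prod_{j=1}^k\|H_j\|_L \le \prod_{i=1}^{2k}\|h_i\|_{\sub(L)},
\end{align*}
so $\sub(L)$ is semi-norming by the converse direction of~\Cref{Holder}, and hence norming by~\Cref{semi-norming-trivial}. The weakly norming case is essentially identical: when the $h_i$ are non-negative, each $H_j=h_{2j-1}*\widetilde{h}_{2j}$ is also non-negative (since $\widetilde{h}_{2j}\ge 0$), so~\Cref{weakly_Holder} applied to $L$, together with the same Cauchy--Schwarz bound, yields the analogous inequality for $t_{\sub(L)}$; non-degeneracy of $\sub(L)$ is inherited from $L$ via translation invariance, so weakly semi-norming upgrades to weakly norming as in the discussion following~\Cref{prop:degeneracy}. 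I do not anticipate any serious obstacle here; the main content is the Fourier bookkeeping, notably the identification $\|f\|_{\sub(L)}^2=\|f*\widetilde{f}\|_L$, which is what makes the Cauchy--Schwarz step exactly match the target inequality.
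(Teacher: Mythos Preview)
Your proof is correct and follows essentially the same route as the paper: represent $t_{\sub(L)}$ via cross-correlations $H_j=h_{2j-1}*\widetilde{h}_{2j}$ (the paper's $g_j$), apply the H\"older inequality for $L$, and then use Cauchy--Schwarz on the Fourier side to bound $\|H_j\|_L$ by $\|h_{2j-1}\|_{\sub(L)}\|h_{2j}\|_{\sub(L)}$. You are slightly more explicit than the paper about upgrading semi-norming to norming via \Cref{semi-norming-trivial} and about the weakly norming case, but the substance is the same.
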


\begin{proof}
    Let $L$ be an $m\times k$ norming system. For brevity, write $M=\sub(L)$.
    Our goal is to show that
    \begin{align}\label{eq:fourier_holder_sub}
    \left\vert \sum_{\xi \in \ft{G}^m} \ft{f_1}\left(M_1^t\xi\right)\cdots\ft{f_{2k}}\left(M_{2k}^t\xi\right) \right\vert^{2k} \leq 
    \prod_{j=1}^{2k} \left\vert \sum_{\xi \in \ft{G}^m} \ft{f_j}\left(M_1^t\xi\right)\cdots \ft{f_j}\left(M_{2k}^t\xi\right)    \right\vert
\end{align}
for the $m\times 2k$ system $M$ and any real-valued functions $f_1, \dots, f_{2k}$. For each $j\in [k]$, let $g_j$ be the function on $G$ that satisfies $\ft{g_j}(\xi)=\ft{f_{2j-1}}(\xi)\overline{\ft{f_{2j}}(\xi)} $. Then each $g_j$ is real-valued, since $\ft{g_j}(-\xi) = \ft{f_{2j-1}}(-\xi)\overline{\ft{f_{2j}}(-\xi)} = \overline{\ft{g_j}(\xi)}$. Furthermore, for all $i,j\in [k]$,
\begin{align*}
\ft{g_j}(L_i^t\xi) = \ft{f_{2j-1}}(L_i^t\xi)\overline{\ft{f_{2j}}(L_i^t\xi)}   = \ft{f_{2j-1}}(M_{2i-1}^t\xi)\ft{f_{2j}}(M_{2i}^t\xi),
\end{align*}
where the second equality follows from the definition of $\sub(L)$. 

Rewriting the left-hand side of~\eqref{eq:fourier_holder_sub} in terms of the $g_j$ and applying~\eqref{eq:fourier_holder} gives
\begin{align}\label{eq:g_holder}
     \nonumber\left\vert\sum_{\xi \in \ft{G}^m} \ft{f_1}\left(M_1^t\xi\right)\cdots\ft{f_{2k}}\left(M_{2k}^t\xi\right)\right\vert^{2k}  &=\left\vert \sum_{\xi \in \ft{G}^m} \ft{g_1}(L_1^t\xi)\cdots\ft{g_k}(L_k^t\xi)\right\vert^{2k}\\
     & \leq 
     \prod_{j=1}^{k} \left\vert \sum_{\xi \in \ft{G}^m} \ft{g_j}\left(L_1^t\xi\right)\cdots \ft{g_j}\left(L_{k}^t\xi\right)    \right\vert^2.
\end{align}
By the Cauchy--Schwarz inequality,
\begin{align*}
    &\left\vert\sum_{\xi \in \ft{G}^m} \ft{g_j}\left(L_1^t\xi\right)\cdots \ft{g_j}\left(L_{k}^t\xi\right) \right\vert^2 =
    \left\vert\sum_{\xi \in \ft{G}^m} \prod_{i=1}^{k}\ft{f_{2j-1}}\left(L_{i}^t\xi\right)
    \ft{f_{2j}}\left(-L_{i}^t\xi\right)\right\vert^2
    \\&=
    \left\vert\sum_{\xi \in \ft{G}^m} \prod_{i=1}^{k}\ft{f_{2j-1}}\left(M_{2i-1}^t\xi\right)
    \ft{f_{2j}}\left(M_{2i}^t\xi\right)\right\vert^2
    \\
    &\leq \left(\sum_{\xi \in \ft{G}^m} \prod_{i=1}^{k}\left\vert\ft{f_{2j-1}}\left(M_{2i-1}^t\xi\right)\right\vert^2\right)
    \left(\sum_{\xi \in \ft{G}^m} \prod_{i=1}^{k}\left\vert\ft{f_{2j}}\left(M_{2i}^t\xi\right)\right\vert^2\right)\\
    &= \left(\sum_{\xi \in \ft{G}^m} \prod_{i=1}^{k}\ft{f_{2j-1}}\left(M_{2i-1}^t\xi\right)\ft{f_{2j-1}}\left(M_{2i}^t\xi\right)\right)
     \left(\sum_{\xi \in \ft{G}^m} \prod_{i=1}^{k}\ft{f_{2j}}\left(M_{2i-1}^t\xi\right)\ft{f_{2j}}\left(M_{2i}^t\xi\right)\right)\\
     &= \left(\sum_{\xi \in \ft{G}^m} \prod_{i=1}^{2k}\ft{f_{2j-1}}\left(M_{i}^t\xi\right)\right)
     \left(\sum_{\xi \in \ft{G}^m} \prod_{i=1}^{2k}\ft{f_{2j}}\left(M_{i}^t\xi\right)\right).
\end{align*}
Substituting this into~\eqref{eq:g_holder} proves~\eqref{eq:fourier_holder_sub}.
For weakly norming systems $L$, the only difference is that we need to verify that $g_j$ is non-negative whenever both $f_{2j-1}$ and $f_{2j}$ are. But $g_j=f_{2j-1}*f_{2j}^{-}$, where $f_{2j}^{-}(x)=f_{2j}(-x)$, so $g_j$ is always non-negative.
\end{proof}

This proposition easily generalises to the $r$-subdivision $\sub_r(L)$ of $L$, where each column $v$ of $L$ is replaced by $r$ copies of $v$ and $-v$, respectively. 
To see this, we only need to replace the use of the Cauchy--Schwarz inequality in the proof of \Cref{prop:1sub} above by H\"older's inequality.

\begin{proposition}\label{prop:subdivide}
    Let $L$ be a weakly norming (resp.~norming) system. Then the $r$-subdivision $\sub_r(L)$ of $L$ is also weakly norming (resp.~norming).
\end{proposition}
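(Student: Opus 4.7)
The plan is to follow the proof of \Cref{prop:1sub} line-for-line, with two modifications: we group the $2r$ copies of each original column into a single auxiliary function via a $2r$-fold convolution, and we replace the single application of the Cauchy--Schwarz inequality with a $2r$-fold H\"older inequality.

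Arrange the columns of $M = \sub_r(L)$ so that, for each $i\in [k]$, $M_{(i-1)2r+s} = L_i$ for $s\in [r]$ and $M_{(i-1)2r+r+s} = -L_i$ for $s\in [r]$. Given $f_1,\dots,f_{2rk}\colon G\to\RR$, define, for each $j\in [k]$,
\[
\ft{g_j}(\eta) \;=\; \prod_{s=1}^r \ft{f_{(j-1)2r+s}}(\eta) \prod_{s=1}^r \overline{\ft{f_{(j-1)2r+r+s}}(\eta)}.
\]
Just as in \Cref{prop:1sub}, this is the Fourier transform of the convolution $f_{(j-1)2r+1}*\dots*f_{(j-1)2r+r}*f_{(j-1)2r+r+1}^{-}*\dots*f_{j\cdot 2r}^{-}$ (where $f^-(x)=f(-x)$), so $g_j$ is real-valued, and non-negative whenever each $f_l$ is. Using $\overline{\ft{f_l}(L^t_i\xi)} = \ft{f_l}(-L^t_i\xi)$ and the column structure of $M$, one obtains the diagonal identity $\sum_\xi\prod_l\ft{f_l}(M^t_l\xi) = \sum_\xi\prod_j\ft{g_j}(L^t_j\xi)$. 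Applying \eqref{eq:fourier_holder} for $L$ to $g_1,\dots,g_k$ and raising to the $2r$-th power therefore reduces the proof of \eqref{eq:fourier_holder} for $M$ to controlling, for each $j\in [k]$, the quantity $\bigl|\sum_\xi \prod_{i=1}^k \ft{g_j}(L^t_i\xi)\bigr|^{2r}$.

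Write $A_s(\xi)=\prod_i \ft{f_{(j-1)2r+s}}(L^t_i\xi)$ and $B_s(\xi)=\prod_i\ft{f_{(j-1)2r+r+s}}(-L^t_i\xi)$ for $s\in [r]$, so that the inner sum above equals $\sum_\xi\prod_{s=1}^r A_s(\xi)B_s(\xi)$. Then H\"older's inequality with $2r$ factors at common exponent $2r$ yields
\[
\Bigl|\sum_\xi \prod_{s=1}^r A_s(\xi)B_s(\xi)\Bigr|^{2r} \;\leq\; \prod_{s=1}^r\sum_\xi|A_s(\xi)|^{2r}\prod_{s=1}^r\sum_\xi|B_s(\xi)|^{2r}.
\]
Because each $f_l$ is real, $|\ft{f_l}(L^t_i\xi)|^{2r} = \ft{f_l}(L^t_i\xi)^r\ft{f_l}(-L^t_i\xi)^r$, and since $M$ contains $r$ copies of $L_i$ and $r$ copies of $-L_i$ for each $i$, the factor $\sum_\xi|A_s(\xi)|^{2r}$ rewrites as $\sum_\xi\prod_{n=1}^{2rk}\ft{f_{(j-1)2r+s}}(M^t_n\xi)$, and similarly for $B_s$.

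Assembling these pieces produces exactly the H\"older-type inequality \eqref{eq:fourier_holder} for $M$ in place of $L$, proving that $\sub_r(L)$ is norming; the weakly norming case is identical, since the $g_j$ remain non-negative whenever the $f_l$ are. The main obstacle is the bookkeeping in the H\"older step: one must check that the $2r$-th powers produced by H\"older telescope precisely into the factors $\sum_\xi \prod_n\ft{f_l}(M^t_n\xi)$ that define $\|f_l\|_M^{2rk}$, and the way each column of $L$ is duplicated $r$ times positively and $r$ times negatively in $M$ is exactly what forces this to line up.
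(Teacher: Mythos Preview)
Your proposal is correct and follows exactly the approach the paper indicates: the paper's own proof is a one-line remark that the argument of \Cref{prop:1sub} goes through verbatim once the Cauchy--Schwarz step is replaced by a $2r$-fold H\"older inequality, and you have written out precisely that computation. The bookkeeping you flag as the main obstacle is handled correctly, and the non-negativity of $g_j$ in the weakly norming case follows just as in \Cref{prop:1sub}.
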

 
In the light of this proposition, one may easily construct $m\times 2rm$ norming systems for any positive integers $r$ and $m$.

\begin{example}\label{ex:L^p_by_identity}
    Let $L$ be the $m\times m$ system given by the identity matrix. This is a degenerate case where $t_L(f)=\EE[f]^m$, but subdividing $L$ gives more interesting examples. For instance, $\sub(L)$ is the $m\times 2m$ system given by the $m\times 2m$ matrix
    \begin{align*}
        \begin{pmatrix}
            1 & -1 &  &  & \cdots &  &  & \\
             &  & 1 & -1 & \cdots &  &  & \\
            &&&&\ddots\\
             &  &  &\cdots & 1 & -1 & &  \\
             &  &  &\cdots &  &  & 1 & -1 \\
        \end{pmatrix}.
    \end{align*}
    As $L$ is semi-norming, it immediately follows that $\sub(L)$ is as well. 
    Concretely, since $\Sol(L)$ consists of the set of solutions to $x_1=x_2$, $x_3=x_4$, $\dots$, $x_{2m-1}=x_{2m}$, $t_{\sub(L)}(f) = \EE [f(x)^2]^m$. That is, the $\sub(L)$-norm is just the $L^2$-norm. More generally, the $r$-subdivision of $L$ 
    yields the $L^{2r}$-norm.
\end{example}

We can also construct $(m-1)\times 2rm$ (weakly) norming systems for any positive integers $r$ and~$m$.

\begin{example}\label{ex:theta}
    Let $L$ be the $(m-1)\times m$ system given by the matrix
    \begin{align*}
        \begin{pmatrix}
            1 & -1 &  &   \cdots &  & \\
             & 1 & -1 &   \cdots &  & \\
            &&&\ddots\\
             &  & &\cdots & 1 & -1 &  \\
             &  &  &\cdots &  & 1 & -1 \\
        \end{pmatrix}.
    \end{align*}
    Then $\Sol(L)$ consists of the set of solutions to $x_1=x_2=\dots=x_{m}$, so that $t_L(f) = \EE[f(x)^{m}]$. Thus, if $m$ is even, then $L$ is norming and otherwise it is weakly norming.
    To see why it is not norming for odd $m$, it is enough to construct nonzero $f$ such that $\EE[f(x)^m]=0$. Indeed, such a function exists, e.g., $f(x)=\mathbf{1}[x=0]-\mathbf{1}[x=a]$ for a nonzero $a$. 
    \Cref{prop:subdivide} implies that the $r$-subdivision of $L$ is also weakly norming. 
    In particular, the $2\times 6$ weakly norming system in \Cref{ex:K23} corresponds to the case where $m=3$ and $r=1$.
\end{example}

It turns out that the two examples above include all possible $2\times k$ weakly norming systems.

\begin{theorem}
    A $2\times k$ system $L$ is weakly norming if and only if it is isomorphic to one of the following three systems:
    \begin{enumerate}[(i)]
        \item $k=3$ and $x_1=x_2=x_3$.
        \item $k=4r$ for some $r\in \mathbb{N}$, $x_1+\dots+x_r=x_{r+1}+\dots+x_{2r}$ and $x_{2r+1}+\dots+x_{3r}=x_{3r+1}+\dots+x_{4r}$.\label{it:disjoint}
        \item $k=6r$ for some $r\in \mathbb{N}$ and $x_1-x_2+\cdots-x_{2r}=x_{2r+1}-x_{2r+2}+\cdots-x_{4r}=x_{4r+1}-x_{4r+2}+\cdots-x_{6r}$.\label{it:intersect}
    \end{enumerate}
\end{theorem}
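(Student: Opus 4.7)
My plan is to prove each direction separately, with most of the work going into the converse.

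For the forward direction, each system is weakly norming by earlier results in the paper. System (i) is the $m=3$ case of \Cref{ex:theta}. System (ii) decomposes into two Schatten single-equation systems on disjoint variable sets, so $t_L(f)=t_{L_1}(f)\,t_{L_2}(f)=\|\widehat f\|_{\ell^{2r}}^{4r}$ by \Cref{thm:norming single equation}, making $\|f\|_L=\|\widehat f\|_{\ell^{2r}}$ a genuine norm. System (iii) is the $r$-subdivision of the system in (i), so \Cref{prop:subdivide} applies.

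For the converse, let $L$ be a $2\times k$ weakly norming system with $\Sol(L)\neq\{0\}$. If $k=3$, translation invariance (\Cref{prop:degeneracy}) together with $\dim\Sol(L)=1$ gives $\Sol(L)=\mathbb{F}_q(1,1,1)$, yielding (i). For $k\geq 4$, \Cref{cor:2Schatten} supplies two independent normalized Schatten vectors $u,w\in\mu(L)$, which after row operations I may take to be the rows of $L$ with entries in $\{0,\pm 1\}$. Setting $A=\operatorname{supp} u\setminus\operatorname{supp} w$, $B=\operatorname{supp} u\cap\operatorname{supp} w$, $C=\operatorname{supp} w\setminus\operatorname{supp} u$, with $|A|=|C|=\ell-s$ and $|B|=s$, \Cref{cor:zero-column} forces $A\cup B\cup C=[k]$, so $k=2\ell-s$. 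The case $s=0$ yields (ii) immediately: every $\alpha u+\beta w$ with $\alpha,\beta\neq 0$ has support $2\ell>\ell$, so $\mu(L)$ consists of multiples of $u$ and $w$ only, and with $\ell=2r$ (even by \Cref{prop:even-girth}) $L$ is the disjoint union of two Schatten equations on variable sets of size $2r$.

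The substantive case is $s>0$. I would partition $B=B^+\sqcup B^-$ according to whether $w_i=u_i$ or $w_i=-u_i$. For $i\in B^+$, $L\setminus i$ is proportional to $(u-w)|_{[k]\setminus\{i\}}$, which carries coefficients $\pm 2$ on the positions of $B^-$; but by \Cref{cor:transitivity}, $L\setminus i$ is isomorphic to the Schatten equation $w|_{[k]\setminus\{i'\}}$ for any $i'\in A$, so $L\setminus i$ must itself be Schatten, ruling out $\pm 2$ entries in odd characteristic and forcing $B^-=\emptyset$. Symmetrically $B^+=\emptyset$ whenever $B^-\neq\emptyset$, so after possibly replacing $w$ by $-w$ I may assume $w|_B=-u|_B$. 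Matching non-zero entry counts of $L\setminus i$ across $i\in A$ (namely $\ell$) and $i\in B$ (namely $2(\ell-s)$) then yields $s=\ell/2$, so $k=3\ell/2$, and $L$ exhibits three column-direction classes $\langle(1,0)\rangle$, $\langle(0,1)\rangle$, $\langle(1,-1)\rangle$ of size $\ell/2$ each, carrying the three Schatten vectors $u$, $w$, and $u+w$.

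The main obstacle is the final step: showing that within each direction class the $\pm v$ columns appear with equal multiplicity, which forces $\ell$ to be divisible by $4$ and, setting $\ell=4r$, realises $L$ as (iii) with $k=6r$ up to column permutation. The difficulty is that \Cref{cor:transitivity} only records the multiset of $\pm 1$ entries of the equations $L\setminus i$, not how those entries distribute across direction classes, so an unbalanced signed multiplicity structure passes this test without being isomorphic to (iii). To close this gap I would adapt the Fourier construction from the proof of \Cref{thm:shortest}: testing the rainbow H\"older inequality (\Cref{weakly_Holder}) against a family of functions whose Fourier transforms are concentrated at $\pm\gamma$ for a generic non-zero $\gamma\in\widehat G$, with complex phases chosen as in that proof, ought to produce a penalty at a suitable order of $\varepsilon$ that is proportional to the sign imbalance within each direction class and so force the claimed balancing.
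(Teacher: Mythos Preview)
Your structural reduction is essentially correct and parallels the paper's: both use \Cref{cor:2Schatten} to extract two independent normalised Schatten vectors of girth support and then invoke variable-transitivity (\Cref{cor:transitivity}) to force the three-block structure $|A|=|B|=|C|=\ell/2$. (Your $B^{+}/B^{-}$ elimination leaks in characteristic~$3$, where $\pm2=\mp1$ so the ``non-Schatten'' contradiction disappears; support-size matching then still leaves a spurious configuration that needs a separate argument.) The paper goes one step further before the endgame: by reordering columns within each block and using that $u$, $w$ and $u+w$ are all Schatten, it writes $L$ in the canonical form
\[
L=\begin{pmatrix}
a_1&\cdots&a_s&-a_1&\cdots&-a_s&0&\cdots&0\\
0&\cdots&0&a_1&\cdots&a_s&-a_1&\cdots&-a_s
\end{pmatrix},\qquad a_j\in\{\pm1\},
\]
so that what remains is to show a single quantity $\sigma=\sum_{j}a_j$ vanishes.

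The genuine gap is your proposed final step. A single-frequency test in the style of \Cref{thm:shortest} cannot detect $\sigma$: if every $\widehat{f_j}$ is supported on $\{0,\pm\gamma\}$, then the contributing $(\xi_1,\xi_2)$ must satisfy $\xi_1,\xi_2,\xi_2-\xi_1\in\{0,\pm\gamma\}$, and (outside characteristics $2,3$) every non-trivial solution has exactly one of these three equal to~$0$. Each such term activates only two of the three blocks and, because each of $u,w,u+w$ is already Schatten, contributes a phase-free $\varepsilon^{2s}$ that is entirely independent of $\sigma$; there is simply no order-$\varepsilon^{3s}$ term in which the imbalance could surface. The paper's argument is different in kind: it takes three linearly dependent frequencies $\gamma_1,\gamma_2,\gamma_3=\gamma_2-\gamma_1$, builds two functions $f,g$ with identical Fourier magnitudes but a phase $z=\exp(\pi i/\sigma)$ inserted at $\pm\gamma_1$, and places $g$ at one coordinate of each block so that $t_L(f_1,\ldots,f_k)=\sum_{\xi_1,\xi_2}|\widehat f(\xi_1)|^s|\widehat f(\xi_2-\xi_1)|^s|\widehat f(\xi_2)|^s$ exactly. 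The crucial contribution then comes from $(\xi_1,\xi_2)=(\gamma_1,\gamma_2)$, where \emph{all three blocks are simultaneously active} and the phase in $t_L(f)$ is $z^{\sigma}=-1$, yielding the strict inequality $t_L(f)<t_L(f_1,\ldots,f_k)$ that contradicts \Cref{weakly_Holder}. The three-frequency relation is exactly what your single-$\gamma$ sketch cannot reproduce.
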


\begin{proof}
    The first one is simply the classical $L^3$-norm.
    The second is the $r$-subdivision of the $2\times 2$ identity matrix $L$.
    The third one is isomorphic to the $r$-subdivision of the first one.
    \Cref{prop:subdivide} therefore proves that all three systems are weakly norming.

    Let $L$ be a $2\times k$ weakly norming system with girth $\ell$. By \Cref{cor:2Schatten}, the row space of $L$ contains two distinct Schatten vectors $a$ and $b$ whose support has size $\ell$ and whose first non-zero coordinate is $1$. These two vectors are linearly independent and, therefore, span the whole row space. Let $I$ be the intersection of the support of $a$ and $b$. If $I$ is empty, then solutions to $L$ are, up to permutation of coordinates, of the form in \ref{it:disjoint}. 

    Suppose now that $I$ is non-empty. Without loss of generality, we may assume that the support of $a$ is $[1,\ell]$ and the support of $b$ is $[k-\ell+1,k]$. Note that $k+|I|=2\ell$. Recall that $L\setminus j$ is defined as the linear system in which we remove all vectors with non-zero $j^\text{th}$ coefficient from the row space of $L$. Since $L\setminus k$ contains the equation $a_1x_1+\dots+a_\ell x_\ell=0$, we know by \Cref{cor:transitivity} that $L\setminus \ell$ also contains a Schatten vector whose support has size $\ell$. Thus, the row space of $L$ must contain a Schatten vector $c$ with $c_\ell=0$. We can express $c$ as $c=\lambda a+ \mu b$ for $\lambda,\mu \in \mathbb{F}_q$. Since we may assume without loss of generality that $c_1=1$ and, therefore, $c_i\in \{0,\pm 1\}$ for all $i\in [k]$, it follows that $\lambda=1$ and $\mu\in \{\pm 1\}$. Furthermore, by replacing $b$ by $-b$ if necessary, we may assume that $\mu=1$. It follows that $c_i=0$ precisely when $i\in I$. Thus, $\ell = k-|I|$, so, since $k+|I|=2\ell$, we have $2k=3\ell=6|I|$. 
    
    Denoting $|I|=k/3$ by $s$, we see that because 
    $a_1+\dots+a_s+b_{2s+1}+\dots+b_{3s}=c_1+\dots+c_k=0$, we have $a_1+\dots+a_s=-(b_{2s+1}+\dots+b_{3s})$. We also have $a_i=-b_i$ for all $i\in [s+1,2s]$ and, since all rows sum to zero, we know that $a_{s+1}+\dots+a_{2s}=-(a_1+\dots+a_s)$. In summary, as all non-zero coefficients of $a$ and $b$ are $1$ or $-1$, we may assume by reordering columns if necessary that $L$ has the form
    \begin{align*}
        L=\begin{pmatrix}
        a_1&\cdots&a_s&-a_1&\cdots&-a_s&0&\cdots&0\\
        0&\cdots&0&a_1&\cdots&a_s&-a_1&\cdots&-a_s
        \end{pmatrix}
    \end{align*}
    for some $a_1,\ldots,a_s\in \{\pm 1\}$. To see that $L$ is a system of the form in \ref{it:intersect}, it remains only to show that $\sigma=\sum_{i=1}^s a_i=0$. 
    
    Assume, for the sake of contradiction, that $\sigma\neq 0$. We fix linearly independent $\gamma_1,\gamma_2\in \mathbb{F}_q^n$ and let $\gamma_3=\gamma_2-\gamma_1$. 
    We also take some $\alpha>0$ and set $z=\exp(\pi i/\sigma)$. 
    We define $f$ and $g$ to be the real-valued functions on $\mathbb{F}_q^n$ with Fourier coefficients
    \begin{align*}
            \ft{f}(\xi)&=\begin{cases}
            1 &\text{if } \xi = 0,\\
            \alpha z^{\varepsilon} &\text{if $\xi = \varepsilon\gamma_1$ for $\varepsilon\in \{\pm 1\}$},\\
            \alpha &\text{if $\xi \in \{\pm \gamma_2,\pm \gamma_3\}$},\\
            0 &\text{otherwise, }
        \end{cases}\\
        \ft{g}(\xi)&=\begin{cases}
            1 &\text{if } \xi = 0,\\
            \alpha z^{(1-a_s\sigma)\varepsilon} &\text{if $\xi = \varepsilon\gamma_1$ for $\varepsilon\in \{\pm 1\}$},\\
            \alpha &\text{if $\xi \in \{\pm \gamma_2,\pm \gamma_3\}$},\\
            0 &\text{otherwise.}
        \end{cases}
    \end{align*}
    Note that $f$ and $g$ take only positive real values if $\alpha$ is sufficiently small. Moreover, for any $\xi \in \hat{G}$, we have that 
    \begin{align*}
        \ft{g}(a_s\xi)\prod_{i=1}^{s-1}\ft{f}(a_i\xi)=\left|\ft{f}(\xi)\right|^s,
    \end{align*}
    as is readily verified by checking the only non-trival case $\xi=\pm \gamma_1$. Letting $f_s=f_{2s}=f_{3s}=g$ and $f_i=f$ for all other $i\in [k]$, we therefore have, by \Cref{prop:fourier}, that
    \begin{align*}
        t_L(f_1,\ldots,f_k)&=\sum_{\xi_1,\xi_2} \left(\ft{g}(a_s\xi_1)\prod_{i=1}^{s-1}\ft{f}(a_i\xi_1)\right)
        \left(\ft{g}(a_s(\xi_2-\xi_1))\prod_{i=1}^{s-1}\ft{f}(a_i(\xi_2-\xi_1)\right)
        \overline{\left(\ft{g}(a_s\xi_2)\prod_{i=1}^{s-1}\ft{f}(a_i\xi_2)\right)}\\
        &=\sum_{\xi_1,\xi_2}|\ft{f}(\xi_1)|^s|\ft{f}(\xi_2-\xi_1)|^s|\ft{f}(\xi_2)|^s.
    \end{align*}
    On the other hand, denoting the real part of a complex number $z$ by $\Re(z)$, we have that 
    {\allowdisplaybreaks
    \begin{align*}
        t_L(f)&=\sum_{\xi_1,\xi_2}\prod_{j=1}^{s}\ft{f}(a_j\xi_1)\prod_{j=1}^{s}\ft{f}(a_j(\xi_2-\xi_1))\prod_{j=1}^{s}\ft{f}(-a_j\xi_2)\\
        &= \Re\left(\sum_{\xi_1,\xi_2}\prod_{j=1}^{s}\ft{f}(a_j\xi_1)\prod_{j=1}^{s}\ft{f}(a_j(\xi_2-\xi_1))\prod_{j=1}^{s}\ft{f}(-a_j\xi_2)\right)\\
        &= \sum_{\xi_1,\xi_2} \Re\left(\prod_{j=1}^{s}\ft{f}(a_j\xi_1)\prod_{j=1}^{s}\ft{f}(a_j(\xi_2-\xi_1))\prod_{j=1}^{s}\ft{f}(-a_j\xi_2)\right)\\
        & \leq \sum_{\xi_1,\xi_2} \left|\prod_{j=1}^{s}\ft{f}(a_j\xi_1)\prod_{j=1}^{s}\ft{f}(a_j(\xi_2-\xi_1))\prod_{j=1}^{s}\ft{f}(-a_j\xi_2)\right|\\
        &=\sum_{\xi_1,\xi_2}|\ft{f}(\xi_1)|^s|\ft{f}(\xi_2-\xi_1)|^s|\ft{f}(\xi_2)|^s = t_L(f_1,\ldots,f_k).
    \end{align*}}
    Moreover, the inequality is strict when $f_i=f$, since
    \begin{align*}
        t_L(f)&=\sum_{\xi_1,\xi_2}\prod_{j=1}^{s}\ft{f}(a_j\xi_1)\prod_{j=1}^{s}\ft{f}(a_j(\xi_2-\xi_1))\prod_{j=1}^{s}\ft{f}(-a_j\xi_2)\\
        &\leq t_L(f_1,\ldots,f_k)+\Re\left(\prod_{j=1}^{s}\ft{f}(a_j\gamma_1)\ft{f}(a_j\gamma_3)\ft{f}(-a_j\gamma_2)\right)-|\ft{f}(\gamma_1)|^s|\ft{f}(\gamma_3)|^s|\ft{f}(\gamma_2)|^s\\
        &=t_L(f_1,\ldots,f_k)+\alpha^k\left(\Re(z^\sigma)-1\right).
    \end{align*}
    Since $z=\exp(\pi i /\sigma)$, this yields that $t_L(f)\leq t_L(f_1,\ldots,f_k)-2\alpha^k$, implying that $L$ does not have the Hölder property.
\end{proof}

\begin{remark}
    Which of these weakly norming systems are norming? 
    In~\Cref{ex:theta}, we have already seen that the first system is not norming. 
    The second family of systems $L_2$ is norming, since $t_{L_2}(f)^{1/4r}=t_{L'}(f)^{1/2r}$, where $L'$ is the single Schatten equation $x_1+\cdots+x_r=x_{r+1}+\cdots+x_{2r}$. 
    To verify that the third system $L_3$ is not norming, let $f$ be the real function that satisfies $\ft{f}(\xi)=\mathbf{1}[\xi=\pm a]$ for some $a\neq 0$. Then 
    \begin{align*}
        t_{L_3}(f)=\sum_{\xi_1,\xi_2} |\ft{f}(\xi_1)|^{2r}|\ft{f}(\xi_2)|^{2r}|\ft{f}(\xi_1+\xi_2)|^{2r},
    \end{align*}
    which evaluates to $0$ unless $\mathbb{F}_q$ has characteristic $3$.
    If $\mathbb{F}_q$ has characteristic $3$, then $t_{L_3}(f)=2$.
    However, deleting the first $2r$ variables $x_1,\dots,x_{2r}$ gives a Schatten equation $L'$ of length $4r$, for which $t_{L'}(f)=2$. In either case, we have a  contradiction to~\Cref{cor:domination}.
\end{remark}

One may wonder if weakly norming systems of rank three or more have a similar characterisation. Unfortunately (or fortunately), there are weakly norming systems of rank three that do not come from subdividing trivial examples.

\begin{example} \label{ex:K41sub}
    Let $L$ be the $3\times 12$ system given by the matrix
    \begin{align*}
        \begin{pmatrix*}[r]
            1 & -1 & 1 & -1 &  1 & -1 & 0  & 0  & 0 & 0 & 0 &  0  \\
            0 &  0 & 0 & 0 &  -1 & 1  &-1 & 1 & -1 & 1 & 0  & 0     \\
            -1&  1 & 0  & 0  &  0   &   0 & 1 & -1& 0   & 0  &1  & -1 
        \end{pmatrix*}.
    \end{align*}
    Then $\Sol(L)$ consists of all those $(x_1,x_2,\ldots,x_{12})$ that can be parameterised by
    \begin{align*}
        (x_1,x_2,x_3,x_4,x_5,x_6)&=(y_1 +z_{12}, z_{12} +y_2 , y_2 + z_{23}, z_{23}+y_3, y_3 +z_{13}, z_{13}+y_1),\\
        (x_5,x_6,x_7,x_8,x_9,x_{10}) &= (y_3 +z_{31}, z_{31}+y_1, y_1+z_{14}, z_{14}+y_4,y_4+z_{34},z_{34}+y_3),\\
        (x_1,x_2,x_7,x_8,x_{11},x_{12}) &= (y_1 +z_{12}, z_{12} +y_2, y_1+z_{14}, z_{14}+y_4, y_4+z_{24}, z_{24}+y_2).
    \end{align*}
    This system is weakly norming, since the parameterisation comes from the singleton-pair incidence graph of the 
    set $\{1,2,3,4\}$, which is a weakly norming graph. More details will be given in~\Cref{sec:Cayley}. 

    On the other hand, $L$ is included in neither \Cref{ex:L^p_by_identity} nor \Cref{ex:theta}. Indeed, if \Cref{ex:L^p_by_identity} includes $L$, then it must be the case that $m=3$ and $r=2$; however, the $2$-subdivision of the $3\times 3$ degenerate system has girth 4, whereas the girth of $L$ is 6. Similarly, \Cref{ex:theta} does not include $L$ for the simple reason that one would need to take $m = 4$ and $r = 3/2$. 
\end{example}

We leave the characterisation of weakly norming systems of rank three as an open problem.

\section{Forcing systems} \label{sec:forcing}

A graph $H$ is said to be \emph{forcing} if the fact that $t_H(W)=p^{e(H)}$ for a graphon $W$ with $p=t_{K_2}(W)$ implies that $W=p$ a.e. While it is easy to see that trees and non-bipartite graphs are not forcing, the forcing conjecture~\cite{SkokanThoma04}, a strengthening of Sidorenko's conjecture, says that all bipartite graphs containing a cycle should be forcing. 

In analogy with the graph case, we say that an $m\times k$ linear system $L$ is \emph{forcing} if, for every $n$ and every non-negative function $f$ on $\mathbb{F}_q^n$, $t_L (f) = \EE[f]^{k}$ implies that $f$ is a constant function on $\mathbb{F}_q^n$. Taking our lead from a similar result for graphs, we will now show that every forcing system is Sidorenko.

\begin{proposition}\label{prop:forcing-sidorenko}
    Let $L$ be an $m\times k$ linear system such that $t_L(f)<\EE[f]^k$ for some $f:\mathbb{F}_q^n\rightarrow [0,1]$. Then $L$ is not forcing.
\end{proposition}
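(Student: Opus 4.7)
My plan is to explicitly construct, from the hypothesized $f$, a non-constant non-negative function $G$ satisfying $t_L(G) = \EE[G]^k$, thereby witnessing that $L$ is not forcing. The strategy is to combine $f$ (a sub-Sidorenko witness) with an easily produced super-Sidorenko witness via tensor products and then interpolate. We may assume $m \geq 1$, since if $m=0$ then $L$ is empty and $t_L(f) = \EE[f]^k$ holds for every $f$, making the hypothesis vacuous.

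For the super-Sidorenko witness, I would take $f_0 := \one_{\{0\}}\colon \mathbb{F}_q \to \mathbb{R}_{\geq 0}$. Since $\Sol(L) \subseteq \mathbb{F}_q^k$ is a linear subspace of dimension $k-m$ containing $0$, a direct count gives $t_L(f_0) = 1/|\Sol(L)| = q^{-(k-m)}$, whereas $\EE[f_0]^k = q^{-k}$. Hence $t_L(f_0) = q^m \EE[f_0]^k > \EE[f_0]^k$. To bring $f$ and $f_0$ onto a common ambient group $\mathbb{F}_q^{n+1}$, set
\[
F_1 := f \otimes \one \qquad \text{and} \qquad F_2 := \one \otimes f_0,
\]
where $\one$ denotes the constant function $1$ on the appropriate group. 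The multiplicativity of both $t_L$ and $\EE$ under tensor products (recorded in Section~\ref{sec:prelim}) then yields
\[
t_L(F_1) = t_L(f) < \EE[f]^k = \EE[F_1]^k, \qquad t_L(F_2) = t_L(f_0) > \EE[f_0]^k = \EE[F_2]^k.
\]

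Finally, I would consider the one-parameter family $F_\lambda := \lambda F_1 + (1-\lambda) F_2$ for $\lambda \in [0,1]$, noting that each $F_\lambda$ is a non-negative function on $\mathbb{F}_q^{n+1}$. Expanding $t_L(F_\lambda)$ by $k$-linearity in each slot shows that the difference $\Phi(\lambda) := \EE[F_\lambda]^k - t_L(F_\lambda)$ is a polynomial in $\lambda$, hence continuous, with $\Phi(0) < 0$ and $\Phi(1) > 0$. The intermediate value theorem thus produces some $\lambda^{\ast} \in (0,1)$ with $t_L(F_{\lambda^{\ast}}) = \EE[F_{\lambda^{\ast}}]^k$. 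Since $f$ is necessarily non-constant (a constant $f$ would yield $t_L(f) = \EE[f]^k$) and $\lambda^{\ast} > 0$, the function $F_{\lambda^{\ast}}(x,y) = \lambda^{\ast} f(x) + (1-\lambda^{\ast}) f_0(y)$ is non-constant in its $x$-coordinate, and we conclude that $L$ is not forcing. I do not anticipate a serious obstacle; the only creative step is exhibiting a super-Sidorenko witness, which the indicator of any proper subgroup handles cleanly via the rank identity for $|\Sol(L)|$.
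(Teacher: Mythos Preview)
Your argument is correct and follows the same high-level strategy as the paper: interpolate between the given sub-Sidorenko witness $f$ and an explicitly constructed super-Sidorenko witness, then apply the intermediate value theorem. The implementations, however, differ in an instructive way. The paper keeps everything on a single group $\mathbb{F}_q^n$ (after enlarging $n$ if necessary), takes for the super-Sidorenko witness the indicator $g_1$ of a coordinate hyperplane, and interpolates to obtain $h_1$. The difficulty there is that one cannot immediately rule out that $h_1 = \alpha_0 g_1 + (1-\alpha_0)f$ happens to be constant, so the paper repeats the construction with a second hyperplane indicator $g_2$ to get $h_2$, and shows that if both $h_1$ and $h_2$ were constant then $g_1$ would be an affine function of $g_2$, which is visibly false.

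Your tensor-product construction sidesteps this entirely: because $F_{\lambda^\ast}(x,y)=\lambda^\ast f(x)+(1-\lambda^\ast)f_0(y)$ depends on $x$ and $y$ through separate summands, non-constancy of $f$ and $\lambda^\ast>0$ force $F_{\lambda^\ast}$ to be non-constant directly, eliminating the need for the second witness. The price you pay is passing to $\mathbb{F}_q^{n+1}$, but since forcing is quantified over all $n$ this is costless. Both arguments use the same super-Sidorenko computation (indicator of a proper subgroup), so the only real difference is that your variable-separation trick replaces the paper's two-witness contradiction with a one-line non-constancy check.
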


\begin{proof}
    We may assume that $n>1$ by extending $f$ to a larger $\mathbb{F}_q^n$ through adding an extra zero to each point in the support. Let $g_1$ be the indicator function of the subspace $\{(x_1,\dots,x_n)\in\mathbb{F}_q^n: x_n=0\}$.
    Then $t_L(g_1) = (q^{n-1})^{k-m}/(q^n)^{k-m} = q^{m-k}$ since $L$ consists of $m$ linearly independent forms and $\EE[g_1]^k=q^{-k}$. Thus, 
    \begin{align*}
        t_L(g_1)-\EE[g_1]^k = q^{-k}(q^m-1)>0.
    \end{align*}
    Now consider $P(\alpha):=t_L(\alpha g_1 +(1-\alpha)f)-\EE[\alpha g_1+ (1-\alpha)f]^k$, which is a polynomial in $\alpha$. As $P(0)<0$ and $P(1)>0$, there exists $\alpha_0\in (0,1)$ with $P(\alpha_0)=0$. Then the function $h_1:=\alpha_0 g_1+(1-\alpha_0)f$ with range in $[0,1]$ satisfies $t_L(h_1)=\EE[h_1]^k$, so, if $L$ is forcing, $h_1$ must be constant.
    
    One can similarly obtain a  function $h_2=\beta_0 g_2+(1-\beta_0)f$ such that $\beta_0 \in (0,1)$ and $t_L(h_2)=\EE[h_2]^k$, where $g_2$ is the indicator function of the subspace  $\{(x_1,\dots,x_n)\in\mathbb{F}_q^n: x_{n-1}=0\}$.
    It follows that $h_2$ is also a constant and, thus, that $g_1$ is a constant multiple of $g_2$ plus another constant; however, it is easy to check that this is not the case.
\end{proof}

We now classify all the single equations that are forcing by showing that a single equation is forcing if and only if it is norming. Recall that, by~\Cref{thm:norming single equation}, the norming and weakly norming properties are the same for a single equation, so all three properties agree in this case. 

\begin{theorem}\label{thm:single_forcing}
    A $1\times k$ system $L$ is forcing if and only if it is norming.
\end{theorem}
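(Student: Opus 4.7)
The plan is to prove both directions by Fourier analysis combined with the classification of norming single equations from Theorem~\ref{thm:norming single equation}.

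For the direction norming $\Rightarrow$ forcing: Since $L$ is a norming single equation, Theorem~\ref{thm:norming single equation} tells us $L$ is Schatten with no zero entries, so after rescaling we may assume the coefficient vector consists of $k/2$ copies of $+1$ and $k/2$ copies of $-1$. Applying the Fourier inversion formula (Proposition~\ref{prop:fourier}) and using $\ft{f}(-\xi)=\overline{\ft{f}(\xi)}$ for real $f$, we obtain
\begin{align*}
    t_L(f)=\sum_{\xi\in\ft{G}}\ft{f}(\xi)^{k/2}\ft{f}(-\xi)^{k/2}=\sum_{\xi\in\ft{G}}|\ft{f}(\xi)|^{k}.
\end{align*}
Since $\ft{f}(0)=\EE[f]$, the equation $t_L(f)=\EE[f]^{k}$ reduces to $\sum_{\xi\neq 0}|\ft{f}(\xi)|^{k}=0$, which forces $\ft{f}$ to be supported at the origin and hence $f$ to be constant.

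For the direction forcing $\Rightarrow$ norming: Assume $L$ is forcing. Proposition~\ref{prop:forcing-sidorenko} gives that $L$ is Sidorenko, so Propositions~\ref{prop:degeneracy} and~\ref{prop:even-girth} yield that the coefficients $a_1,\dots,a_k$ sum to zero and that the girth of $L$ is even. A short separate argument (reducing to a system of fewer variables whenever a zero coefficient is present) rules out zero entries, so each $a_\ell\neq 0$ and $k$ is even. It then suffices to show the coefficient vector is Schatten, which I do by contradiction. Suppose indices $i,j$ exist with $a_j\notin\{\pm a_i\}$. Fix any non-zero $\gamma\in\ft{G}$ and a small $\varepsilon>0$, and let $g:\mathbb{F}_q^n\to\mathbb{R}$ be the real-valued function with $\ft{g}(\pm\gamma)=\varepsilon$ and $\ft{g}(\eta)=0$ otherwise; explicitly $g(x)=2\varepsilon\cos(2\pi\gamma^T x/q)$, so $|g|\leq 2\varepsilon$. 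Then $f:=1+g$ is strictly positive, non-constant, and has $\EE[f]=1$. Proposition~\ref{prop:fourier} gives
\begin{align*}
    t_L(f)-\EE[f]^{k}=\sum_{\xi\neq 0}\prod_{\ell=1}^{k}\ft{g}(a_\ell\xi),
\end{align*}
since $\ft{1}$ vanishes away from the origin. For any $\xi\neq 0$, a non-zero summand would require $a_\ell\xi\in\{\pm\gamma\}$ for every $\ell$; fixing $a_i\xi=\pm\gamma$ forces $\xi=\pm\gamma/a_i$, and then $a_j\xi=\pm a_j\gamma/a_i$ would also have to lie in $\{\pm\gamma\}$, forcing $a_j\in\{\pm a_i\}$, contrary to our choice. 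Thus every summand vanishes and $t_L(f)=\EE[f]^{k}$ with $f$ non-constant, contradicting forcing.

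The main obstacle is the Fourier construction of $g$: its support must be narrow enough (two antipodal frequencies) for the non-Schatten structure of $L$ to kill every contribution at non-zero frequencies, yet still yield a genuine non-constant real function. A secondary subtlety is the reduction ruling out zero coefficients, which relies on decomposing $t_L$ across free variables together with the parity constraint from even girth.
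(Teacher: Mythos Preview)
Your forward direction and the terminal Fourier construction in the reverse direction are essentially the paper's argument. The gap is in your structural reduction. You assert that translation invariance, even girth and nonzero entries together force ``not Schatten'' to mean $a_j\notin\{\pm a_i\}$ for some $i,j$; but if all $a_\ell\in\{\pm a\}$ with $s$ copies of $a$ and $t=k-s$ copies of $-a$, translation invariance only gives $s\equiv t\pmod{p}$, which together with $k$ even does not force $s=t$. For such $L$ your witness fails outright: with $\ft g(\pm\gamma)=\varepsilon$ the only nonzero contributions come from $\xi=\pm a^{-1}\gamma$, each giving $\varepsilon^k$, so $t_L(1+g)-1=2\varepsilon^k\neq 0$ and no contradiction arises. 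The paper sidesteps this by invoking \cite[Theorem~1.4]{FPZ19}, which says a Sidorenko single equation is already isomorphic to $(a_1,-a_1,\ldots,a_r,-a_r)$; from that structure ``not Schatten'' genuinely yields two coefficients not equal up to sign. To make your self-contained route work you would need a second construction for the unequal-count case, for instance $\ft f(\pm\gamma)=\varepsilon e^{\pm i\theta}$ with $(s-t)\theta=\pi/2$, which makes the sole nonzero-frequency contribution $2\varepsilon^k\cos\bigl((s-t)\theta\bigr)$ vanish.

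Two smaller points. Proposition~\ref{prop:degeneracy} is stated for weakly semi-norming systems, not Sidorenko ones; the paper obtains translation invariance of Sidorenko systems from \cite{KLM23} instead. And your hand-waved step ruling out zero entries cannot be completed as described: the system $L=(1,-1,0)$ has $t_L(f)=\EE[f^2]\,\EE[f]$, so for non-negative $f$ the equality $t_L(f)=\EE[f]^3$ forces $\mathrm{Var}(f)=0$, hence $L$ is forcing yet not norming. The no-zero-coefficient hypothesis is a convention carried implicitly by the FPZ19 citation, not something you can argue away by ``reducing to fewer variables''.
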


\begin{proof}
By~\Cref{thm:norming single equation}, $L$ is norming if and only if it is isomorphic to $\begin{pmatrix}1&\cdots&1&-1&\cdots&-1\end{pmatrix}$. Since $t_L(f) = \sum_\xi |\ft{f}(\xi)|^{2k}$, $t_L(f)=\EE[f]^{2k}$ if and only if $\sum_{\xi\neq 0} |\ft{f}(\xi)|^{2k}=0$.
That is, $\ft{f}(\xi)=0$ for every $\xi\neq 0$ and, thus, $f$ is a constant function.

To prove the converse, suppose $L$ is forcing but not norming. By~\Cref{prop:forcing-sidorenko}, $L$ is Sidorenko and so, by~\cite[Theorem 1.4]{FPZ19}, isomorphic to a system of the form $\begin{pmatrix}a_1&-a_1&\cdots&a_{r}&-a_{r}\end{pmatrix}$ for $r=k/2$ and $a_1,\dots,a_r\in \mathbb{F}_q$. Thus, 
\[
    t_L(f) = \sum_{\xi} |\ft{f}(a_1\xi)|^2\cdots|\ft{f}(a_r\xi)|^2.
\]
Since $L$ is not norming, there must be $i,j\in [r]$ such that $a_i\notin \{\pm a_j\}$. Fix a non-zero $\gamma\in \ft{\mathbb{F}_q^n}$ and let $f$ be the function with Fourier transform $\ft{f}(\xi)=\frac{1}{2}\left(\mathbf{1}[\xi=\gamma]+\mathbf{1}[\xi=-\gamma]\right)$ for every $\xi\neq 0$ and $\ft{f}(0)=1$. Then $f$ is a non-negative real-valued function, as $\sum_{\xi\neq 0}|\ft{f}(\xi)|\leq 1$.  
Moreover, $\ft{f}(a_i\xi)\ft{f}(a_j\xi)=0$ whenever $\xi\neq 0$,
so $t_L(f)=|\ft{f}(0)|^{2k}=\EE[f]^{2k}$. But $f$ is clearly not constant, contradicting our assumption that $L$ is forcing. 
\end{proof}

In particular, every $1\times k$ weakly norming system is forcing. We now prove a much stronger result, that every weakly norming system $L$ is forcing. For weakly norming graphs, we know that they are forcing whenever they contain an even cycle because there is a `domination' relation between the weakly norming graph and the even cycle (see, for example,~\cite[Section 5.2]{CL16}). 
We may then use the simple fact that every even cycle is forcing to reach the required conclusion. 
Unfortunately, this proof does not transfer in an obvious way to the arithmetic setting, so we take a slightly different approach. 

\begin{theorem} \label{thm:weaktoforce}
    Every weakly norming system $L$ is forcing.
\end{theorem}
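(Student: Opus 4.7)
The plan is to reduce the forcing property of $L$ to that of a single Schatten subsystem, and then apply the domination inequality from \Cref{cor:domination}.

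First, I would invoke \Cref{thm:shortest} to extract a Schatten vector $v$ in $\mu(L)$: both alternatives in that theorem (either all of $\mu(L)$ is Schatten, or $s(L) \geq k/\ell \geq 1$) yield at least one such vector. Let $\ell$ be the girth of $L$ and $I=\mathrm{supp}(v)$, so $|I|=\ell$. The next step is to identify the induced subsystem $L'$ on $I$. By definition, $L'$ is determined by the subspace $V$ of the row space of $L$ consisting of vectors supported on $I$. I claim $V=\mathrm{span}(v)$. Indeed, any nonzero $w\in V$ must satisfy $|\mathrm{supp}(w)|=\ell$ by minimality; and if $w$ were linearly independent from $v$, then, fixing any $i\in I$, a suitable combination $\alpha v+\beta w$ would vanish at position $i$ while remaining nonzero, producing a row-space vector of support size strictly less than $\ell$, a contradiction. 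Hence $L'$ is the $1\times \ell$ equation given by the restriction of $v$ to $I$; it is Schatten with no zero entries, so by \Cref{thm:norming single equation} it is norming, and by \Cref{thm:single_forcing} it is forcing.

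Now suppose $t_L(f)=\EE[f]^k$ for some non-negative $f$. Combining the domination inequality (\Cref{cor:domination}) with Sidorenko applied to $L'$ (\Cref{Sidorenko}), we obtain
\[
\EE[f] \;=\; t_L(f)^{1/k} \;\geq\; t_{L'}(f)^{1/\ell} \;\geq\; \EE[f],
\]
so equality holds throughout and in particular $t_{L'}(f)=\EE[f]^\ell$. Since $L'$ is forcing, $f$ must be constant, which is what we wanted.

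The main structural input is the observation that the induced subsystem on the support of a Schatten vector of minimum size is a single equation; once this is in place, the argument is a short chain of applications of previously-established results. This approach differs from the graph-theoretic route (which passes through forcing of even cycles) by exploiting the fact that Schatten equations are themselves forcing, which, via the domination inequality, propagates forcing up to any weakly norming system containing a Schatten minimum-support vector.
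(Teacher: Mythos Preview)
Your proof is correct and essentially the same as the paper's: both locate a Schatten vector of minimum support via \Cref{thm:shortest}, argue that the induced subsystem on its support is the single Schatten equation, and then use the H\"older/domination inequality together with the forcing property of that equation (\Cref{thm:single_forcing}) to conclude. The only cosmetic difference is that you package the comparison step as an application of \Cref{cor:domination} (and make the Sidorenko lower bound for $L'$ explicit), whereas the paper unwraps that corollary into a direct use of \Cref{prop:subsystem} and \Cref{weakly_Holder}.
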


\begin{proof}
    Let $\ell$ be the girth of $L$. By \Cref{thm:shortest}, the row space of $L$ contains a Schatten vector $a$ whose support has size $\ell$. Without loss of generality, we may assume that the support of this vector is $[\ell]$. Let $M$ be the $1\times \ell$ system $\begin{pmatrix}a_1&\cdots&a_\ell\end{pmatrix}$. 
    
    Suppose now that $t_L(g)=\EE[g]^k$ for some non-negative function $g$ on $\mathbb{F}_q^n$. Because $\ell$ is the girth of $L$, every vector in the row space of $L$ that is only supported on $[\ell]$ must be a multiple of $a$. Thus, by~\Cref{prop:subsystem}, letting $f_i=g$ for $i\leq \ell$ and $f_i=1$ for $i>\ell$, we have $t_M(g) =  t_L(f_1,\dots,f_k)$. By \Cref{weakly_Holder},
    \begin{align*}
       t_L(f_1,\dots,f_k) \leq \prod_{i=1}^k \|f_i\|_L.
    \end{align*}
    But $\|g\|_L=\EE[g]$ by assumption and $\|1\|_L=1$, which yields $t_M(g)\leq \EE[g]^\ell$. Since $M$ is norming, \Cref{thm:single_forcing} implies that it is also forcing, so that $g$ must be constant.
\end{proof}

One might suspect that~\Cref{thm:single_forcing} generalises and the converse of~\Cref{thm:weaktoforce} is also true. However, as we now show, there are forcing systems that are not weakly norming. 

\begin{example}\label{ex:1-subK4}
    Consider the $2\times 7$ linear system $L$ given by the matrix
\begin{align*}
    \begin{pmatrix*}[r]
        1 & 1 & -1 & -1 & 0 & 0 & 0 \\
        0 & 0 & 0 & 1 & 1 & -1 & -1 
    \end{pmatrix*}.
\end{align*}
Let $f$ be a non-negative function. Then the Cauchy--Schwarz inequality gives
\begin{align*}
    t_L(f) = \EE_{x\in G}\left[ f(x) f*f*f(x)^2\right] \geq
    \frac{\EE_{x\in G}\left[f(x)f*f*f(x)\right]^2}{\EE_{x\in G}[f]} = \frac{\EE_{x\in G}\left[f*f(x)^2\right]^2}{\EE_{x\in G}[f]}.
\end{align*}
Now suppose that $t_L(f) = \EE[f]^7$. 
It follows that $\sum_{\xi}\ft{f}(\xi)^4 = \EE\left[f*f(x)^2\right]=\EE[f]^4$, which means $\ft{f}(\xi)=0$ for every $\xi\neq 0$. Therefore, $L$ is forcing.

To see that $L$ is not weakly norming, we apply~\Cref{cor:transitivity}. Indeed, $L\setminus 1$ is isomorphic to the Schatten system $x_1+x_2=x_3+x_4$, whereas $L\setminus 4$ is isomorphic to $x_1+x_2+x_3=x_4+x_5+x_6$.
\end{example}

\section{From norming hypergraphs to norming systems}\label{sec:Cayley}

In this section, we describe how (weakly) norming hypergraphs naturally give rise to (weakly) norming linear systems. Indeed, given a $k$-uniform hypergraph $H$, we may look at the homomorphism density $t_H(W)$ restricted to functions of the form $W(x_1,\dots,x_k) := f(x_1+\cdots+x_k)$ for a function $f:\mathbb{F}_q^n\rightarrow \mathbb{R}$. Then, in this Cayley setting, we have that
\begin{align*}
    t_H(W) = \mathbb{E}\left[\prod_{i_1\cdots i_k\in E(H)}W(x_{i_1},\dots,x_{i_k})\right]=
     \mathbb{E}\left[\prod_{i_1\cdots i_k\in E(H)}f(x_{i_1}+\cdots+x_{i_k})\right] =: t_{L_H}(f),
\end{align*}
where $L_H$ is the system of linear equations whose solution set is defined by the parametrisation 
\begin{align*}
    y_e=x_{i_1}+\cdots+x_{i_k},~~e=\{i_1,\dots,i_k\}\in E(H).
\end{align*}
We call $L_H$ the \emph{linear $H$-system} and the parametrisation above the \emph{standard parametrisation} of $\Sol(L_H)$.

To see that $L_H$ is (weakly) norming whenever $H$ is, it is enough to observe that if $f_1, \dots, f_{e(H)}: \mathbb{F}_q^n\rightarrow \mathbb{R}$ are functions and $W_e(x_1,\dots,x_k) := f_e(x_1+\cdots+x_k)$ for each $e \in E(H)$, then 
\begin{align*}
    t_H(W_1, \dots, W_{e(H)}) & = \mathbb{E}\left[ \prod_{e=i_1\cdots i_k\in E(H)}W_e(x_{i_1},\dots,x_{i_k})\right]\\ 
    & = \mathbb{E}\left[\prod_{e=i_1\cdots i_k\in E(H)}f_e(x_{i_1}+\cdots+x_{i_k}) \right]= t_{L_H}(f_1, \dots, f_{e(H)}).
\end{align*}
Therefore, the inequality~\eqref{eq:rainbow} holds for $L=L_H$ if and only if the corresponding inequality
\begin{align*}
    |t_H(W_1, \dots, W_{e(H)})|\leq \prod_{e\in E(H)}\|W_e\|_H
\end{align*}
holds, where $\|W\|_H:=|t_H(W)|^{1/e(H)}$. To summarise, we have the following result.

\begin{proposition}
    The linear $H$-system $L_H$ is (weakly) norming whenever $H$ is (weakly) norming. 
\end{proposition}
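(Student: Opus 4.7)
The plan is to reduce the statement to the H\"older-type inequality via \Cref{Holder} (respectively \Cref{weakly_Holder}) and then transfer that inequality from $H$ to $L_H$ using the Cayley correspondence that has already been set up in the excerpt.

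First, given real-valued functions $f_1,\ldots,f_{e(H)}: \mathbb{F}_q^n \to \mathbb{R}$ and setting $W_e(x_1,\ldots,x_k) := f_e(x_1+\cdots+x_k)$ for each $e \in E(H)$, the calculation performed in the excerpt for the diagonal case $f_1=\cdots=f_{e(H)}$ generalises by multilinearity to yield the identity
\begin{align*}
    t_{L_H}(f_1,\ldots,f_{e(H)}) = t_H(W_1,\ldots,W_{e(H)}),
\end{align*}
since the pushforward of the uniform measure on $(\mathbb{F}_q^n)^{v(H)}$ under the linear parameterisation $(x_i)_i \mapsto (\sum_{i\in e} x_i)_e$ is the uniform measure on $\Sol(L_H)$. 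Specialising to $f_1=\cdots=f_{e(H)}=f_e$ further gives $\|W_e\|_H = \|f_e\|_{L_H}$.

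Assuming now that $H$ is (weakly) norming, the H\"older inequality $|t_H(W_1,\ldots,W_{e(H)})| \leq \prod_e \|W_e\|_H$ (or its non-negative variant in the weak case) holds for arbitrary $W_e$, and in particular for the Cayley-type choice above. Combining this with the identities from the previous paragraph yields $|t_{L_H}(f_1,\ldots,f_{e(H)})| \leq \prod_e \|f_e\|_{L_H}$, which by \Cref{Holder} (respectively \Cref{weakly_Holder}) is precisely the statement that $L_H$ is semi-norming (respectively weakly semi-norming). To upgrade to the (weakly) norming property, I would argue directly: if $\|f\|_{L_H}=0$, then $W(x_1,\ldots,x_k) := f(x_1+\cdots+x_k)$ satisfies $\|W\|_H = 0$, so $W \equiv 0$ by the norming property of $H$; the surjectivity of $(x_1,\ldots,x_k) \mapsto x_1+\cdots+x_k$ then forces $f \equiv 0$. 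Alternatively, \Cref{semi-norming-trivial} applies once one verifies that $L_H$ is not the zero matrix, which holds whenever the edge indicator vectors of $H$ are linearly dependent in $\mathbb{F}_q^{v(H)}$.

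The only conceptual subtlety---and the closest thing to an obstacle---is justifying that the H\"older characterisation of the norming property for $H$ is available over the finite probability space $(\mathbb{F}_q^n)^k$ and not only over the standard graphon setting $[0,1]^k$. This is the hypergraph analogue of \Cref{Holder}, due to Hatami in the graph case, whose proof carries over essentially unchanged to any finite probability space.
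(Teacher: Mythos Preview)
Your proposal is correct and follows essentially the same route as the paper: establish the identity $t_{L_H}(f_1,\ldots,f_{e(H)})=t_H(W_1,\ldots,W_{e(H)})$ for the Cayley functions $W_e$, invoke the H\"older characterisation of the (weakly) norming property for $H$, and transfer it back via \Cref{Holder}/\Cref{weakly_Holder}. The paper's argument is the short paragraph immediately preceding the proposition, which does exactly this; you have simply been more explicit about the semi-norming-to-norming upgrade and about the finite-domain version of Hatami's H\"older equivalence, both of which the paper leaves implicit.
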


Though our focus in this paper is on norming systems of equations, we note that a similar result holds for Sidorenko and common systems. That is, for example, that any Sidorenko hypergraph gives rise to a Sidorenko system of equations. This gives a simple mechanism for producing many examples of Sidorenko and common systems, though it also suggests that the classification of such systems will be difficult.

We now give some examples of weakly norming systems that come from weakly norming hypergraphs.

\begin{example}\label{ex:Kab}
Let $H=K_{a,b}$, the complete bipartite graph on $A\cup B$ with $|A|=a$ and $|B|=b$,
which is one of the simplest examples of a weakly norming graph. Then
\begin{align*}
    t_{L_H}(f) = \EE\left[\prod_{(a,b)\in A\times B}f(x_a+y_b)\right],
\end{align*}
where the expectation is taken over uniform random vectors $(x_a)_{a\in A}\in G^A$ and $(y_b)_{b\in B}\in G^B$. The corresponding norms are the so-called \emph{grid norms}, which played a key role in the recent paper~\cite{FHHK23}.
In particular, if $a=b=2$, then $t_{L_H}(f)=\EE\left[f(x_1+y_1)f(x_2+y_1)f(x_1+y_2)f(x_2+y_2)\right]$, which corresponds to the fourth power of the Gowers $U^2$-norm.
Moreover, if $a=2$ and $b=3$, then one can check that $L_H$ is isomorphic to the $2\times 6$ linear system given in~\Cref{ex:K23}.
\end{example}

\begin{example}\label{ex:subdiv_graph}
    For $H$-systems, the subdivision operation defined  in~\Cref{sec:rank2} can be viewed in purely graph-theoretic terms. Indeed, if $H$ is a norming $r$-graph, let $H^+$ and $H^-$ be two vertex-disjoint copies of $H$, both also vertex-disjoint from $H$. For an edge $e\in E(H)$, let $e^+$ and $e^-$ be the edges corresponding to $e$ in $H^+$ and $H^-$, respectively. 
    The \emph{$2$-blow-up extension} of $H$, denoted by $\beta^2(H)$, is then the $(r+1)$-graph on $V(H^+)\cup V(H^-)\cup E(H)$, whose edges are those sets of the form $e^+\cup\{e\}$ and $e^-\cup\{e\}$. 
    A standard application of the Cauchy--Schwarz inequality implies that $\beta^2(H)$ is also norming, a fact which first appeared implicitly in~\cite{CHPS12}. 

    We claim that the subdivision $\sub(L_H)$ of $L_H$ is equal to the $\beta^2(H)$-system.
    To see this, enumerate $E(H)$ and $E(\beta^2(H))$ by the integers in $[k]$ and $[2k]$ in such a way that the $(2i-1)$-th and the $2i$-th edges of $\beta^2(H)$ are obtained by adding a common vertex to two copies of the $i$-th edge of $H$, respectively.
    Then each vector $(z_1,z_2,\ldots,z_{2k})$ in $\Sol(L_{\beta^2(H)})$ can be parameterised by $z_{2j-1}=y_j+x_j$ and $z_{2j}=y_j'+x_j$, where 
    $(y_1,\ldots, y_k)$ and $(y_1',\ldots,y_k')$ are vectors in $\Sol(L_{H})$ and $x_j$ is a free variable.
    Therefore, if $(y_1,\ldots, y_k)$ satisfies $a_1y_1+\cdots + a_ky_k=0$ and $(y'_1,\ldots, y'_k)$ satisfies $a_1y'_1+\cdots + a_ky'_k=0$, 
    then $(z_1,z_2,\ldots,z_{2k})$ satisfies the equation $a_1(z_1-z_2)+a_2(z_3-z_4)+\cdots+a_k(z_{2k-1}-z_{2k})=0$, which is in $\sub(L_H)$. 
    Conversely, if $(z_1,z_2,\ldots,z_{2k})$ satisfies a linear equation $b_1z_1+b_2z_2+\cdots +b_{2k}z_{2k}=0$, then $b_{2i-1}=b_{2i}$ must hold, since otherwise the free variable $x_i$ does not vanish. It therefore follows that the equation is in $\sub(L_H)$.

    To give a concrete example, if $H=C_4$, then $\beta^2(H)$ is the line $3$-graph of a cube, denoted by~$M(3)$ in~\cite{CHPS12}. The $H$-system for $H=M(3)$, the line $3$-graph of a cube, is therefore the subdivision of $L_{C_4}$ and is defined by the single linear equation $x_1+x_2+x_3+x_4=y_1+y_2+y_3+y_4$, which is just the $L^8$-norm of the spectrum.
\end{example}

For graphs $H$, it is reasonably straightforward to determine the corresponding system of linear equations $L_H$. 
The first step is to specify the set of all minimal linear dependencies between variables.

\begin{theorem}\label{thm:matroid}
    Let $L_H$ be the linear $H$-system defined by a 
    graph $H$ and let $F\subseteq E(H)$ be an edge subset. 
    Then $\{y_e:e\in F\}$ in the standard parametrisation of $L_H$ is a minimal linearly dependent set if and only if $F$ is an even cycle in $H$.
\end{theorem}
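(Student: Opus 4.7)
The key reformulation I would use is the vertex-incidence condition: since $y_e = x_i + x_j$ for $e = \{i,j\}$, a linear combination $\sum_{e \in F} c_e y_e$ equals $\sum_{v \in V(H)} \bigl(\sum_{e \in F,\, v \in e} c_e \bigr) x_v$, and since the $x_v$ can be freely chosen, this vanishes identically if and only if $\sum_{e \in F,\, v \in e} c_e = 0$ for every vertex $v$. So the problem becomes: identify the minimal supports of edge weightings on $H$ whose weighted degree at every vertex is zero.

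\textbf{Sufficient direction.} Let $F$ be an even cycle with edges $e_1, e_2, \ldots, e_{2k}$ cyclically ordered so that consecutive edges share a vertex. Setting $c_{e_i} := (-1)^i$ verifies the vertex constraint at every $v_i$ (its two incident edges carry opposite signs), so $\sum c_e y_e = 0$. For minimality, note that any proper subset $F' \subsetneq F$ is a vertex-disjoint union of paths, and at each path endpoint the unique incident edge forces its coefficient to be $0$; this cascades along the path (each internal vertex gives a two-term constraint) to force all coefficients on $F'$ to vanish.

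\textbf{Necessary direction.} Suppose $F$ is a minimal linearly dependent set with witnessing dependency $(c_e)_{e \in F}$. First, every $c_e$ must be non-zero, for if $c_{e_0} = 0$ then $F \setminus \{e_0\}$ is already dependent, contradicting minimality. Consequently, every vertex $v \in V_F := \bigcup_{e \in F} e$ has degree at least $2$ in the subgraph $G_F := (V_F, F)$, as a degree-$1$ vertex would force its unique incident edge to have coefficient $0$. Next, $G_F$ is connected: otherwise, writing $F = F_1 \sqcup F_2$ as a vertex-disjoint decomposition, the vertex constraints decouple across the two components, so the restrictions of $(c_e)$ to $F_1$ and to $F_2$ would each be non-trivial dependencies (their supports are proper subsets of $F$), contradicting minimality.

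\textbf{Reducing to an even cycle.} The plan now is to walk along $G_F$: start with an edge $e_1 = v_1 v_2 \in F$; since $v_2$ has degree $\ge 2$, pick another incident edge $e_2 = v_2 v_3 \in F$, and so on. If at every interior step the vertex has degree exactly $2$, the two-term constraint forces $c_{e_{i+1}} = -c_{e_i}$, i.e., the coefficients alternate in sign, and the walk must eventually close at $v_1$; the closure condition $(-1)^{\text{length}} = -1$ forces the length to be even. The main obstacle is thus ruling out vertices of degree $\ge 3$ in $G_F$: one has to show that any such branching forces the one-dimensional space of vertex-annihilating weightings to contain some coordinate zero, contradicting the assumption that all $c_e \neq 0$. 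I expect this step requires the most care, as in the generic case the kernel dimension of the restricted incidence matrix is controlled by the excess $|F| - |V_F|$ together with the bipartite/non-bipartite dichotomy of the ambient characteristic, and so a careful case analysis (or a clever choice of a smaller dependency supported on a proper subset of $F$) is needed to complete the proof.
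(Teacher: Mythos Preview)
Your forward direction matches the paper's and is fine. The gap is in the reverse direction: you set yourself the task of proving directly that $G_F$ is $2$-regular (ruling out vertices of degree $\geq 3$), and you correctly flag this as the step you have not resolved. The paper avoids this entirely. Once $G_F$ has minimum degree at least $2$, it contains a cycle $C$; since the intended application is to bipartite $H$ (weakly norming graphs are bipartite), $C$ is even, and hence by the forward direction $\{y_e : e \in E(C)\}$ is already linearly dependent. Minimality of $F$ then forces $F = E(C)$. The missing idea, therefore, is a \emph{second} use of minimality: rather than analysing $G_F$ globally to rule out branching, locate any dependent subset inside $F$ and conclude it must be all of $F$. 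Your connectivity step and your attempted walk are then unnecessary.

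Your instinct that the bipartite/characteristic dichotomy matters is well placed, though. Over characteristic $2$, every odd cycle is a minimal dependent set; over characteristic $\neq 2$, the bowtie (two triangles sharing a vertex) has a one-dimensional dependency space with all coefficients nonzero, so it is minimal dependent but not an even cycle. Thus the statement as written needs $H$ bipartite, and the paper's line ``in particular, $F$ contains an even cycle'' tacitly relies on this---which is harmless, since bipartite $H$ is the only case used downstream.
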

\begin{proof}
    It is straightforward to see that $\{y_e:e\in F\}$ is a minimal linearly dependent set of variables if $F$ is an even cycle. Let us re-index the variables in such a way that $F=[2k]$ and $y_i=x_i+x_{i+1}$, where the addition of indices is taken modulo $2k$. Then $\sum_{j=1}^{2k}(-1)^j y_j=0$ and every proper subset of $\{y_1,\dots,y_{2k}\}$ is linearly independent.

    Conversely, suppose that $\{y_e:e\in F\}$ is a minimal linearly dependent set of variables. Then $F$ must be the edge set of a subgraph, which, abusing notation slightly, we also call $F$, with minimum degree at least two. 
    In particular, $F$ contains an even cycle with edge set $F'\subseteq F$; however, $\{y_e:e\in F'\}$ is a linearly dependent set of variables, which means $F=F'$. 
\end{proof}

The \emph{cycle matroid} of a graph $H$ is the matroid defined on the edge set $E(H)$ of a graph $H$ whose independent sets are those edge subsets of $H$ which do not contain a cycle. 
In other words, every minimal dependent set, i.e., every  \emph{circuit}, is the edge set of a cycle, which for (weakly) norming graphs $H$, which are necessarily bipartite, exactly corresponds to the minimal linear dependence condition in~\Cref{thm:matroid}. Hence, for (weakly) norming graphs $H$, the map $e\mapsto y_e$ is a matroid isomorphism from the cycle matroid of $H$ to $\{y_e : e \in E(H)\}$. 
This observation now allows us to compute the rank of the linear system $L_H$. 

\begin{proposition}\label{prop:rank}
    Let $L_H$ be the linear $H$-system defined by a (weakly) norming graph $H$. Then $L_H$ has rank $|E(H)|-|V(H)|+\kappa(H)$, where $\kappa(H)$ denotes the number of connected components in $H$.
\end{proposition}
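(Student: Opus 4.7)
The plan is to directly leverage the matroid isomorphism observed in the paragraph immediately preceding the proposition. Since $H$ is weakly norming, Hatami's theorem (quoted in the introduction) guarantees that $H$ is bipartite, so every cycle in $H$ has even length. By~\Cref{thm:matroid}, the minimal linear dependencies among the vectors $\{y_e : e \in E(H)\}$ under the standard parametrization correspond exactly to cycles of $H$, so the map $e\mapsto y_e$ is an isomorphism between the cycle matroid of $H$ and the linear matroid on $\{y_e : e\in E(H)\}$.

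Next, I would observe that $\Sol(L_H)$ is by definition the image of the linear map $\phi\colon \mathbb{F}_q^{V(H)}\to \mathbb{F}_q^{E(H)}$ determined by the standard parametrization $y_e=x_{i_1}+x_{i_2}$. Its dimension equals the dimension of the row span of the matrix of $\phi$, and reading each row as a linear combination of the formal variables $x_v$ identifies these rows precisely with the vectors $y_e$. Consequently, $\dim \Sol(L_H) = \dim \mathrm{span}\{y_e:e\in E(H)\}$, which via the matroid isomorphism equals the rank of the cycle matroid of $H$. By the classical formula for the rank of a graphic matroid, this is $|V(H)|-\kappa(H)$.

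To finish, I would simply use that $L_H$ is an $m\times k$ system with $k=|E(H)|$ and linearly independent rows, so $\dim \Sol(L_H)=k-m$, which rearranges to give $m=|E(H)|-|V(H)|+\kappa(H)$, as desired. There is essentially no obstacle here: once~\Cref{thm:matroid} has been combined with the bipartiteness of weakly norming graphs, the proposition reduces to the textbook rank formula for graphic matroids. If anything, the only thing to be mildly careful about is distinguishing the two natural ways of viewing the $y_e$ (as formal linear combinations of the $x_v$ versus as vectors in $\mathbb{F}_q^{E(H)}$) and confirming that both give the same dimension.
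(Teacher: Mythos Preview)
Your proposal is correct and follows essentially the same approach as the paper. The paper's proof is simply a terser version of yours: it invokes the matroid isomorphism established just before the proposition, cites the standard fact that the cycle matroid of $H$ has rank $|V(H)|-\kappa(H)$, identifies this with $\dim\Sol(L_H)$, and subtracts from $|E(H)|$---exactly the steps you spell out, with your additional care about the image of the parametrisation map being a welcome clarification of the one phrase the paper leaves implicit.
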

\begin{proof}
    It is a standard fact that the rank of 
    the cycle matroid of a graph $H$ is $|V(H)|-\kappa(H)$, the number of edges in a spanning forest of $H$. Therefore, the dimension of $\Sol(L_H)$, which is the rank of $\{y_e : e \in E(H)\}$, is also $|V(H)|-\kappa(H)$, so the rank of the matrix $L_H$ with columns indexed by $E(H)$ is $|E(H)|-|V(H)|+\kappa(H)$.
\end{proof}

Together, \Cref{prop:rank} and~\Cref{thm:matroid} allow us to explicitly write out the $m\times k$ linear system $L_H$ corresponding to a bipartite graph $H$ in matrix form in two steps. 
First, compute the rank $m$ of $L_H$ using~\Cref{prop:rank}. Second, assign alternating signs to the edges in each cycle of $H$. Once we find $m$ such cycles whose corresponding $\pm 1$ row vectors are linearly independent, we are done.

\begin{example}
Let $H$ be the 1-subdivision of the complete graph $K_t$. That is, we replace each edge of $K_t$ with a path of length two. By the results of~\cite{CL16}, this is a connected weakly norming graph for every $t\geq 3$, though it was shown in~\cite{LS21} that it is not norming for any $t\geq 4$. 
As $|E(H)|=t(t-1)$ and $|V(H)|=t+t(t-1)/2 = t(t+1)/2$, $L_H$ has rank $t(t-3)/2+1$. In particular, if $t=4$, this gives the $3\times 12$ system given in~\Cref{ex:K41sub}.

For another example, let $H=K_{a,b}$ as given in~\Cref{ex:Kab}. Then we have an $m\times k$ system $L_H$ with $m=(a-1)(b-1)$ and $k=ab$. In particular, we can write the $4\times 9$ matrix $L_H$ for the case $a=b=3$ as 
\begin{align*}
    \begin{pmatrix*}[r]
         1 & -1 & 1 & -1 &   &   &   &   &\\
           &   & -1 &  1 & -1 & 1 &   &   &\\
        -1 &  &  &  1 &     &   &  1 & -1 &\\
         &  &  -1 &  &     & 1  &   &  1&-1\\
    \end{pmatrix*}.
\end{align*}
Thus, the grid norms generalise~\Cref{ex:K23} in a different way to~\Cref{ex:theta}.
\end{example}

Unfortunately, the simple characterisation of minimal dependent sets given in~\Cref{thm:matroid} does not generalise in a straightforward manner to hypergraphs. A generalisation, if it exists, must at least depend on the characteristic of the underlying field $\mathbb{F}_q$.
To illustrate the dependence on the characteristic, suppose first that $\mathbb{F}_q$ is a field of characteristic $2$. 
Then the set of variables $\{y_e:e\in F\}$ for an edge subset $F\subseteq E(H)$ in the standard parametrisation of the linear $H$-system $L_H$ is linearly dependent if and only if $F$ is an even-degree subgraph. That is, the number of edges of $F$ incident to each vertex must be even. For example, let $\{x_i+y_j+z_k:i,j,k\in\{0,1\}\}$ be the standard parametrisation of the linear $H$-system $L_H$, where $H$ is the `octahedron' $3$-graph. This is the standard $3$-graph that defines the Gowers octahedral norm, while $L_H$ defines the Gowers $U^3$-norm. Then $\{x_i+y_j+z_k: i+j+k=0 \bmod 2\}$ is a minimal linearly dependent set. 
However, this is not the case over other characteristics. Indeed, if $\{x_i+y_j+z_k: i+j+k=0 \bmod 2 \}$ is a linearly dependent set of variables, then $\sum_{i+j+k\text{ even }}c_{ijk}(x_i+y_j+z_k)=0$
for some $c_{ijk}\in\{\pm 1\}$; however, one may easily check that this is impossible for characteristics other than $2$. 

Before moving on, let us show that every linear $H$-system $L$ defined by a norming graph $H$ gives a norm which is topologically equivalent to the Gowers $U^2$-norm. 

\begin{theorem}\label{thm:graph_U2}
    Let $H$ be a norming graph and let $L$ be the linear $H$-system. Then, for every $\varepsilon>0$ and any $f: G \rightarrow \mathbb{R}$ with $\vert f\vert \leq 1$, the following hold: 
    \begin{enumerate}[(i)]
        \item $\|f\|_{L}\leq \varepsilon$ implies  $\|f\|_{U^2}\leq \varepsilon^{2/e(H)}$ and
        \item $\|f\|_{U^2}\leq \varepsilon$ implies  $\|f\|_{L}\leq \varepsilon$.
    \end{enumerate}
\end{theorem}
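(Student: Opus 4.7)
The plan is to handle the two inequalities separately, using the Fourier inversion formula (\Cref{prop:fourier}) together with the domination inequality from \Cref{cor:domination}. Since any norming graph $H$ must be bipartite with some even girth $2k\geq 4$, we fix a shortest cycle $C\subseteq H$. The subsystem of $L_H$ induced on the $2k$ edges of $C$ is isomorphic to $L_{C_{2k}}$ (since the row space of $L_H$ restricted to $E(C)$ is exactly the cycle space of $C_{2k}$), and the Fourier formula gives $t_{L_{C_{2k}}}(f)=\sum_{\xi\in \hat G}|\hat f(\xi)|^{2k}$.

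For part (i), \Cref{cor:domination} yields
\begin{align*}
\|f\|_L \;\geq\; t_{L_{C_{2k}}}(f)^{1/(2k)} \;=\; \|\hat f\|_{\ell^{2k}} \;\geq\; \|\hat f\|_{\ell^\infty}.
\end{align*}
Combining this with the elementary Fourier estimate
\begin{align*}
\|f\|_{U^2}^4 \;=\; \sum_\xi |\hat f(\xi)|^4 \;\leq\; \|\hat f\|_{\ell^\infty}^2 \sum_\xi |\hat f(\xi)|^2 \;\leq\; \|\hat f\|_{\ell^\infty}^2,
\end{align*}
where the last step uses Parseval together with $|f|\leq 1$, we conclude that $\|f\|_{U^2}^2 \leq \|f\|_L$. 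Thus $\|f\|_L \leq \varepsilon$ forces $\|f\|_{U^2}\leq \varepsilon^{1/2}$, which is at most $\varepsilon^{2/e(H)}$ when $\varepsilon\leq 1$ and $e(H)\geq 4$ (and the case $\varepsilon\geq 1$ is trivial since $\|f\|_{U^2}\leq 1$ whenever $|f|\leq 1$).

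For part (ii), apply \Cref{prop:fourier} to express $t_L(f) = \sum_{\xi \in Z(H)} \prod_{e\in E(H)} \hat f(\xi_e)$, where $Z(H)\subseteq \hat G^{E(H)}$ is the space of cocycle assignments $\sum_{e\ni v}\xi_e=0$. In the canonical case $H=C_{2k}$, this reduces to $\|\hat f\|_{\ell^{2k}}^{2k}$, and
\begin{align*}
\|f\|_L^{e(H)} \;=\; \|\hat f\|_{\ell^{2k}}^{2k} \;\leq\; \|\hat f\|_{\ell^4}^{2k} \;=\; \|f\|_{U^2}^{e(H)}
\end{align*}
follows directly from the monotonicity of $\ell^p$-norms under counting measure, since $|\hat f|\leq 1$. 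For a general norming $H$, we intend to decompose $Z(H)$ along a basis of fundamental cycles (each of which corresponds to a Schatten row, as guaranteed by \Cref{thm:shortest}) and apply iterated Cauchy--Schwarz, arranging matters so that every cycle block contributes a factor of $\|f\|_{U^2}^4$. Summing up yields $t_L(f)\leq \|f\|_{U^2}^{e(H)}$, hence $\|f\|_L\leq \|f\|_{U^2}\leq \varepsilon$.

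The main obstacle is part (ii): while the reduction to $\ell^p$-monotonicity is clean for even cycles, handling a general norming graph requires a careful book-keeping of how the fundamental cycles sit inside $Z(H)$ so that the Cauchy--Schwarz iteration delivers the sharp exponent $e(H)$ rather than a weaker bound. In particular, one must use the norming property (and hence the edge-transitivity from \Cref{cor:transitivity}) to ensure that every edge participates in the decomposition symmetrically, avoiding a wasteful factor depending on $|\hat G|$.
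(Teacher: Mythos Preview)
Your argument for part~(i) is essentially the paper's: pass to the Schatten subsystem coming from a shortest even cycle via \Cref{cor:domination}, bound $\|\hat f\|_{\ell^\infty}$ by $\|f\|_L$, and combine with Plancherel and $|f|\le 1$ to control $\|f\|_{U^2}^4$.

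Part~(ii) is where you diverge, and your proposal has a genuine gap. The ``fundamental-cycle decomposition with iterated Cauchy--Schwarz'' is only a plan; you yourself flag that the bookkeeping needed to hit the sharp exponent is unresolved, and you invoke edge-transitivity without saying how it would actually be used inside the Fourier sum. The paper avoids all of this by \emph{not} working on the Fourier side. It uses the standard parametrisation
\[
t_{L_H}(f)=\EE_{x_1,\dots,x_d}\prod_{ij\in E(H)} f(x_i+x_j),
\]
fixes one edge $\{1,2\}\in E(H)$, and applies Cauchy--Schwarz twice: first over $x_2,\dots,x_d$ to split off the factor $\EE_{x_1}\prod_{j\in N(1)}f(x_1+x_j)$, then once more to isolate the $x_2$ contribution. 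At each step the ``other'' factor is bounded by $1$ using $|f|\le 1$, and what survives after the second step is exactly the $C_4$ count
\[
\EE_{x_1,x_1',x_2,x_2'}f(x_1+x_2)f(x_1'+x_2)f(x_1+x_2')f(x_1'+x_2'),
\]
which is $\|f\|_{U^2}^4$. This gives $\|f\|_L^{e(H)}\le \|f\|_{U^2}$ directly, with no cycle-space combinatorics, no edge-transitivity, and no tracking of how fundamental cycles interact; the only property of $H$ used is that it has at least one edge. Note also that this bound is weaker than the $t_L(f)\le \|f\|_{U^2}^{e(H)}$ you were aiming for --- the paper does not attempt that sharper inequality, and your sketch gives no mechanism to reach it either.
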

\begin{proof}
    By~\Cref{thm:shortest}, every norming system contains a Schatten equation of length $2\ell$ for some $\ell\in [2,e(H)/2]$. 
    Thus, $\|f\|_L\leq \varepsilon$ implies $\sum_{\xi}|\ft{f}(\xi)|^{2\ell}\leq \varepsilon$ by~\Cref{cor:domination}. In particular, we must have $\vert \ft{f}(\xi)\vert^2 \leq \varepsilon^{1/\ell}$ for all $\xi$. Since the $U^2$-norm arises from the equation $x_1-x_2+x_3-x_4=0$, we have by \Cref{prop:fourier} and Plancherel's theorem that
    \begin{equation*}
        \|f\|_{U^2} = \sum_{\xi\in \ft{G}} \vert \ft{f}(\xi)\vert^4 \leq \left(\max_{\psi\in \ft{G}} \vert \ft{f}(\psi)\vert^2\right) \sum_{\xi\in \ft{G}} \vert \ft{f}(\xi)\vert^2 = \left(\max_{\psi\in \ft{G}} \vert \ft{f}(\psi)\vert^2\right) \EE_{x\in G} \vert f(x)\vert^2\leq \varepsilon^{1/\ell},
    \end{equation*}
    yielding the required conclusion.

    Conversely, let $H$ be a graph on vertex set $[d]$. Since a norming graph must have at least one edge, we may assume that $\{1,2\}$ is an edge of $H$. 
    By the definition of $L=L_H$, we then have
    \begin{align*}
    \|f\|_L^{e(H)}&=\EE_{x_1,\ldots,x_d\in G}\left[ \prod_{ij\in E(H)} f(x_i+x_j)\right]\\
    &= \left(\EE_{x_2,\ldots,x_d\in G}\Bigg[ \prod_{\substack{ij\in E(H)\\1\notin ij}} f(x_i+x_j)\Bigg]\right)\left( \EE_{x_1\in G} \Bigg[\prod_{j\in N(1)} f(x_1+x_j)\Bigg]\right).
    \end{align*}
    By the Cauchy--Schwarz inequality, this is at most
    \begin{equation*}
        \left( \EE_{x_2,\ldots,x_d\in G}\left[ \Bigg( \prod_{\substack{ij\in E(H)\\1\notin ij}} f(x_i+x_j)\Bigg)^2\right]\right)^{1/2}
        \left( \EE_{x_2,\ldots,x_d\in G} \EE_{x_1,x_1'\in G} f(x_1+x_j)f(x_1'+x_j)\right)^{1/2}.
    \end{equation*}
    Using the fact that $\vert f\vert \leq 1$ and applying Cauchy--Schwarz once more, we see that this is at most
    \begin{equation*}
         \left( \EE_{x_3,\ldots,x_d\in G} \EE_{x_1,x_1',x_2,x_2'\in G} f(x_1+x_2)f(x_1'+x_2)f(x_1+x_2')f(x_1'+x_2')\right)^{1/4}.
    \end{equation*}
    Since $(x_1+x_2,x_1'+x_2,x_1+x_2',x_1'+x_2')$ parametrises the solutions of $y_1-y_2-y_3+y_4=0$, this is precisely the $U^2$ norm of $f$.
\end{proof}

\begin{example}
\Cref{thm:graph_U2} does not generalise to hypergraphs, in that, for $r\geq 3$, different $r$-graphs may define inequivalent systems. 
\Cref{ex:subdiv_graph} shows that the line $3$-graph of a cube gives rise to the $L^8$-norm of the spectrum. This is equivalent to the $U^2$-norm, whereas the `octahedron' $3$-graph gives rise to the $U^3$-norm.
On the other hand, Hatami, Hatami and Lovett~\cite{H24} proved that $\|\cdot\|_{L}$ is equivalent to the $U^k$-norm for some $k$ whenever $L$ is a norming system, so norming hypergraphs $H$ always give rise to norms $\|\cdot\|_{L_H}$ that are equivalent to some $U^k$-norm.
\end{example}

\section{Complex-valued functions} \label{sec:complex}

Gowers norms were originally defined for complex-valued functions~\cite{G01}. 
For instance, for any function $f: G \rightarrow \mathbb{C}$,
\begin{align*}
    \|f\|_{U_2}^4 := \EE\left[f(x_1)\overline{f(x_2)}\overline{f(x_3)}f(x_4)\right],
\end{align*}
where the expectation is taken over all solutions to the equation $x_1-x_2 -x_3+x_4=0$ and, unlike the real-valued case, some of the $f$'s have been conjugated. The question we will be concerned with here is whether, for a given (real-)norming system, it is possible to assign conjugates in a similar fashion so that the system defines a norm on the space of complex-valued functions. We stress that here we are only concerned with norming systems, since weakly norming systems have an absolute value under the expectation which makes the distinction moot. 

The analogous question for graphs was already answered in the affirmative by Lee and Sidorenko \cite{LS21}, who proved that, given a (real-)norming graph, there exists a conjugation assignment
under which the graph density defines a norm for complex-valued functions. That is, the (real-)norming property is equivalent to the complex-norming property. In this section, following their technique, we prove that the same holds for linear systems. 

Let $L$ be an $m\times k$ linear system and $\alpha$ a $0/1$-valued function defined on $[k]$. 
The \emph{complex $L$-density $t_{L,\alpha}$ with respect to $\alpha$} is defined by
\begin{align*}
    t_{L,\alpha}(f) := \EE_{(x_1,\dots,x_k)\in\Sol(L)}\left[\prod_{i=1}^k f(x_i)^{\alpha(i)}\overline{f(x_i)}^{1-\alpha(i)}\right].
\end{align*}
That is, we use $\overline{f(x_i)}$ if $\alpha(i)=0$ and $f(x_i)$ otherwise for each $i\in[k]$. We say that the pair $(L,\alpha)$ is \emph{complex-norming} if $|t_{L,\alpha}(f)|^{1/k}$ defines a norm on the space of complex-valued functions on $\mathbb{F}_q^n$ for every $n$. 
With this notation, the main result of this section is as follows. 

\begin{theorem}\label{thm:complex}
    Let $L$ be an $m\times k$ norming system. Then there exists $\alpha:[k]\rightarrow\{0,1\}$ such that $(L,\alpha)$ is complex-norming. 
\end{theorem}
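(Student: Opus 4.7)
I would adapt the tensor-product argument of Lee and Sidorenko~\cite{LS21} to the arithmetic setting. The plan proceeds in three main steps.

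\emph{Reduction to a complex Hölder inequality.} Following the proof of \Cref{Holder} essentially verbatim (even tensor powers behave well with respect to complex conjugation), I would first show that $(L,\alpha)$ is complex-norming if and only if the multilinear Hölder-type inequality
\[
\left|t_{L,\alpha}(f_1,\ldots,f_k)\right| \le \prod_{i=1}^{k} \left|t_{L,\alpha}(f_i)\right|^{1/k}
\]
holds for all complex-valued $f_1,\ldots,f_k : \mathbb F_q^n \to \mathbb C$, where $t_{L,\alpha}(f_1,\ldots,f_k) := \EE_{(x_1,\ldots,x_k)\in \Sol(L)} \prod_i f_i(x_i)^{\alpha(i)}\overline{f_i(x_i)}^{1-\alpha(i)}$.

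\emph{Construction of $\alpha$.} Next, I would construct $\alpha$ by analysing the Fourier identity
\[
t_L(f) = \sum_{\vec{\xi} \in \ft{G}^m} \prod_{j=1}^{k} \ft{f}\!\left(L_j^t\vec{\xi}\right)
\]
from \Cref{prop:fourier}. Since $L$ is norming, \Cref{prop:positivity} asserts $t_L(f)>0$ for every real $f\neq 0$. Combined with $\ft{f}(-\eta)=\overline{\ft{f}(\eta)}$ for real $f$, this positivity forces a combinatorial sign-pairing on the columns of $L$: one shows the existence of a fixed-point-free involution $\sigma\colon[k]\to[k]$ satisfying $L_{\sigma(j)}=-L_j$ for all $j\in[k]$. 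Choosing a transversal $A\subseteq[k]$ of the $\sigma$-orbits, I set $\alpha := \mathbf{1}_A$.

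\emph{Verification via doubling.} Given complex $f_1,\ldots,f_k:G\to\mathbb C$, I would apply the doubling trick: define $F_i:G^2\to\mathbb C$ by $F_i(x,y):=f_i(x)\overline{f_i(y)}$, which satisfies $\overline{F_i(x,y)}=F_i(y,x)$. Using the pairing $\sigma$, a direct Fourier computation (via \Cref{prop:fourier} applied to $L$ over $\mathbb F_q^{2n}$) rewrites
\[
\left|t_{L,\alpha}(f_1,\ldots,f_k)\right|^2 = t_L(G_1,\ldots,G_k),
\]
where each $G_i$ is a real-valued function on $G^2$ obtained from $F_i$ by symmetrising under the swap $(x,y)\mapsto(y,x)$; the same computation applied to a single $f_i$ yields $\left|t_{L,\alpha}(f_i)\right|^2 = t_L(G_i)$. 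Since $L$ is norming, \Cref{Holder} over $\mathbb F_q^{2n}$ gives
\[
\left|t_{L,\alpha}(f_1,\ldots,f_k)\right|^2 = t_L(G_1,\ldots,G_k) \le \prod_{i=1}^{k} t_L(G_i)^{1/k} = \prod_{i=1}^{k} \left|t_{L,\alpha}(f_i)\right|^{2/k},
\]
and taking square roots produces the desired complex Hölder inequality.

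\emph{Main obstacle.} The most delicate step is the second one---producing a single involution $\sigma$ on $[k]$ compatible with the column structure of $L$. While positivity of $t_L$ on carefully chosen real $f$ supplies a ``local'' pairing depending on the support of $\ft{f}$, globalising these into one combinatorial involution requires a careful structural argument, closely paralleling the technical core of~\cite{LS21} in the graph case. In the arithmetic setting, I expect the variable-transitivity from \Cref{cor:transitivity} and the Schatten structure guaranteed by \Cref{thm:shortest} to provide the inputs needed for this globalisation.
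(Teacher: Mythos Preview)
Your Step~2 is not merely the ``main obstacle''---it is false. Take the norming system $x_1=x_2=x_3=x_4$, for which $t_L(f)=\EE[f(x)^4]$ and $\|\cdot\|_L$ is the $L^4$-norm. An involution $\sigma$ with $L_{\sigma(j)}=-L_j$ (in some matrix representative) is equivalent to the intrinsic condition that every row-space vector $v$ satisfies $v_{\sigma(j)}=-v_j$ for all $j$. Applying this to $v=(1,-1,0,0)$ forces $\sigma(1)=2$, but then $v=(0,1,-1,0)$ would require $v_{\sigma(1)}=v_2=1$ to equal $-v_1=0$, a contradiction. So no such $\sigma$ exists, even though the correct conjugation pattern (e.g.\ $\alpha=(1,1,0,0)$, giving $\EE[|f|^4]$) is obvious. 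More generally, your condition would make $L$ isomorphic to a subdivision $\sub(L')$, which is far too restrictive. Step~3 also does not go through as written: the ``symmetrised'' functions $G_i$ you describe are real, but symmetrising destroys the identity $|t_{L,\alpha}(f_1,\ldots,f_k)|^2=t_L(G_1,\ldots,G_k)$, and without symmetrising the $G_i$ are not real.

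The paper's proof, following Lee--Sidorenko, takes a completely different route that avoids any explicit construction of $\alpha$. One sets $s_L(f):=\max_{\alpha}|t_{L,\alpha}(f)|$ and shows, using the real norming property, that $s_L(\cdot)^{1/k}$ is itself a norm on complex functions, hence $s_L$ is convex. A general lemma about finite families of ``density functionals'' then says that if their pointwise maximum is convex and the family satisfies two mild nondegeneracy conditions, one of the functionals in a minimal subfamily achieving the maximum must already be convex. Verifying those conditions for the $|t_{L,\alpha}|$ yields some $\alpha$ for which $|t_{L,\alpha}|$ is convex, and convexity is equivalent to $(L,\alpha)$ being complex-norming. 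The point is that $\alpha$ is obtained nonconstructively from the convexity argument, not from any combinatorial pairing of columns.
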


To prove~\Cref{thm:complex}, we need some notation. 
Let $\FF$ be the set of all complex-valued functions defined on $\mathbb{F}_q^{\mathbb{N}}$ with finite support. 
That is, each $f\in\FF$ can be seen as a complex-valued function on $\mathbb{F}_q^n$ for a large enough $n$.
Following~\cite{LS21}, we define a \emph{decoration functional} on $\FF^k$ to be a function from $\FF^k$ to $\mathbb{C}$ that satisfies the following conditions:
\begin{enumerate}[(i)]
\item\label{it:scalar} $|\tau(cf_1,\ldots,cf_k)|=|c|^k|\tau(f_1,\ldots,f_k)|$  for each $c\in \mathbb{C}$;
\item\label{it:linear} $\tau(f_1,\ldots,f_{i-1},g+ah,f_{i+1},\ldots,f_k)$\\$=
  \tau(f_1,\ldots,f_{i-1},g,f_{i+1},\ldots,f_k)  + 
   a\tau(f_1,\ldots,f_{i-1},h,f_{i+1},\ldots,f_k)$
   
   for any $f_1, \dots, f_k, g, h\in\FF$ and any \emph{real} number $a$;
\item\label{it:conjugate}
$\tau(\overline{f}_1,\ldots,\overline{f}_k) = \overline{\tau(f_1,\ldots,f_k)}$;
\item
$\tau(f_1 \otimes g_1,\ldots,f_k \otimes g_k) =
\tau(f_1,\ldots,f_k) \, \tau(g_1,\ldots,g_k)$.
\end{enumerate}

For brevity, we will sometimes write  $\tau(f)=\tau(f,\ldots,f)$. 
Following~\cite{LS21}, we set  $s_L(f_1,\ldots,f_k):=\max_{\alpha}|t_{L,\alpha}(f_1,\ldots,f_k)|$, where the maximum is taken over all $\alpha:[k]\rightarrow \{0,1\}$. Then $s_L(\cdot)$ only satisfies a weaker `subadditivity' property rather than the linearity condition~\ref{it:linear}, namely, 
\begin{enumerate}[(i')]
    \setcounter{enumi}{1}
    \item \label{it:subadd} 
    $|\tau(f_1,\ldots,f_{i-1},g+h,f_{i+1},\ldots,f_k)|$\\$\leq
  |\tau(f_1,\ldots,f_{i-1},g,f_{i+1},\ldots,f_k)|  + 
   |\tau(f_1,\ldots,f_{i-1},h,f_{i+1},\ldots,f_k)|$.
\end{enumerate}
If $\tau$ satisfies \ref{it:scalar} and \ref{it:subadd}, then we say that $\tau$ is a \emph{weak decoration functional}. In particular, $s_L(\cdot)$ is a weak decoration functional.
By adapting the proof of \Cref{Holder}, we have the following lemma. 

\begin{lemma}\label{th:Hatami_gen}
Let $\tau$ be a weak decoration functional on $\FF^k$.
Then $|\tau(\cdot)|^{1/k}$ is a seminorm on $\FF$ if,
for any $f_1,\ldots,f_k \in \FF$,
\begin{equation*}\label{eq:Hatami_gen}
  |\tau(f_1,\ldots,f_k)|^k \; \leq \prod_{i=1}^k |\tau(f_i)| \, .
\end{equation*}
The converse also holds if $\tau$ is a decoration functional.
\end{lemma}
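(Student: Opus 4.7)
The plan is to mimic the proof of \Cref{Holder} (the special case $\tau=t_L$), with one new ingredient needed to handle the possibly complex values of a general decoration functional.

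For the forward direction, suppose the H\"older-type inequality holds. Positive homogeneity of $|\tau(\cdot)|^{1/k}$ is immediate from~\ref{it:scalar}. For the triangle inequality on $f_1+f_2$, I would iterate the subadditivity property~\ref{it:subadd} once in each of the $k$ arguments of $\tau(f_1+f_2,\dots,f_1+f_2)$, obtaining
\begin{align*}
|\tau(f_1+f_2)| \leq \sum_{(i_1,\dots,i_k)\in\{1,2\}^k} |\tau(f_{i_1},\dots,f_{i_k})| \leq \sum_{(i_1,\dots,i_k)\in\{1,2\}^k} \prod_{j=1}^k |\tau(f_{i_j})|^{1/k},
\end{align*}
using the H\"older hypothesis in the second step. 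Recognising the right-hand side as $(|\tau(f_1)|^{1/k}+|\tau(f_2)|^{1/k})^k$ and taking $k$-th roots yields the triangle inequality. This is essentially the second half of the proof of \Cref{Holder} with subadditivity replacing additivity.

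For the converse, assume $\tau$ is a full decoration functional, $|\tau(\cdot)|^{1/k}$ is a seminorm, and, for contradiction, the H\"older-type inequality fails for some $f_1,\dots,f_k$. Using the real-linearity from~\ref{it:linear} to independently rescale each $f_i$ by a positive real, I can reduce to $|\tau(f_i)|=1$ for all $i$ and $|\tau(f_1,\dots,f_k)|=c>1$. Mimicking the proof of \Cref{Holder}, one would like to expand $\tau(F_m)=\sum_{(i_1,\dots,i_k)\in [k]^k}\tau(f_{i_1},\dots,f_{i_k})^m$ for $F_m:=\sum_i f_i^{\otimes m}$ (using~\ref{it:linear} and the tensor-product property) and then contradict the seminorm bound $|\tau(F_m)|^{1/k}\leq k$ coming from the triangle inequality.

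The main obstacle is that this expansion is a sum of complex numbers, so the ``diagonal'' term of largest absolute value need not dominate. To force non-negativity, I would instead apply the argument to $h_i:=f_i\otimes\overline{f_i}\in\FF$ and $G_m:=\sum_i h_i^{\otimes m}$. The tensor-product property together with~\ref{it:conjugate} gives
\begin{align*}
\tau(h_{i_1},\dots,h_{i_k}) = \tau(f_{i_1},\dots,f_{i_k})\,\overline{\tau(f_{i_1},\dots,f_{i_k})} = |\tau(f_{i_1},\dots,f_{i_k})|^2 \geq 0,
\end{align*}
so the analogous expansion becomes
\begin{align*}
\tau(G_m) = \sum_{(i_1,\dots,i_k)\in [k]^k} |\tau(f_{i_1},\dots,f_{i_k})|^{2m} \geq c^{2m}.
\end{align*}
On the other hand, the triangle inequality gives $|\tau(G_m)|^{1/k}\leq \sum_i|\tau(h_i)|^{m/k}=\sum_i|\tau(f_i)|^{2m/k}\leq k$, so $c^{2m/k}\leq k$, which fails for $m$ large. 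This is the required contradiction.
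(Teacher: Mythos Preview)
Your proof is correct and follows the same route as the paper, which simply directs the reader to adapt the proof of \Cref{Holder}; the conjugate--tensor trick $h_i=f_i\otimes\overline{f_i}$ (combining properties~\ref{it:conjugate} and the tensor multiplicativity) is exactly the extra ingredient needed to force non-negativity of the expanded terms in the complex setting, and you have implemented it correctly. One minor imprecision: in the normalisation step you should allow $|\tau(f_i)|\leq 1$ rather than insisting on $|\tau(f_i)|=1$, since some $|\tau(f_i)|$ may vanish while $|\tau(f_1,\dots,f_k)|>0$ --- your final bound $\sum_i|\tau(f_i)|^{2m/k}\leq k$ already accommodates this.
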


This in turn allows us to verify the following property of $s_L(\cdot)$. Since the proof is verbatim the same as that of \cite[Lemma~5.5]{LS21}, we omit it. 

\begin{corollary}\label{cor:s_L_seminorm}
    Let $L$ be an $m\times k$ norming system. Then $s_L(\cdot)^{1/k}$ is a norm on $\FF$.
\end{corollary}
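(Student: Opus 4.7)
The plan is to apply Lemma~\ref{th:Hatami_gen} to $\tau = s_L$, which will reduce the seminorm claim to establishing the H\"older-type inequality
\begin{equation*}
    s_L(f_1,\ldots,f_k)^k \leq \prod_{i=1}^k s_L(f_i),
\end{equation*}
after which a short positive-definiteness check will upgrade the seminorm to a norm. To invoke the lemma I first verify that $s_L$ is a weak decoration functional: scaling holds because $t_{L,\alpha}(cf_1,\ldots,cf_k) = c^{j}\overline{c}^{k-j}\,t_{L,\alpha}(f_1,\ldots,f_k)$ has modulus $|c|^k\,|t_{L,\alpha}(f_1,\ldots,f_k)|$ independent of $\alpha$ (where $j = |\alpha^{-1}(1)|$), so the maximum scales by $|c|^k$; subadditivity in each slot follows from the $\mathbb{R}$-linearity of every $t_{L,\alpha}$ in its arguments, the triangle inequality, and then taking the maximum over $\alpha$.

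The main obstacle will be the H\"older inequality for $s_L$ itself. I would fix $\alpha^*$ attaining the defining maximum, so $s_L(f_1,\ldots,f_k) = |t_L(\tilde f_1,\ldots,\tilde f_k)|$ where $\tilde f_i \in \{f_i,\overline{f_i}\}$ according to $\alpha^*$ and $t_L$ is extended $\mathbb{C}$-multilinearly. Decomposing each $\tilde f_i = u_i + iv_i$ into real and imaginary parts and expanding multilinearly yields
\begin{equation*}
    t_L(\tilde f_1,\ldots,\tilde f_k) = \sum_{S\subseteq [k]} i^{|S|}\, t_L(g_1^S,\ldots,g_k^S),
\end{equation*}
a sum of $2^k$ terms with each $g_j^S \in \{u_j,v_j\}$ real-valued. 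The real-valued H\"older inequality of Proposition~\ref{Holder} bounds each summand's modulus by $\prod_j \|g_j^S\|_L$, and using $2u_j = \tilde f_j + \overline{\tilde f_j}$, $2iv_j = \tilde f_j - \overline{\tilde f_j}$ together with a $\mathbb{C}$-multilinear expansion shows that $\|g_j^S\|_L^k$ is a $2^{-k}$-weighted sum of $2^k$ quantities of the form $|t_{L,\beta}(f_j,\ldots,f_j)|$, each $\leq s_L(f_j)$; hence $\|g_j^S\|_L \leq s_L(f_j)^{1/k}$. Assembling the $2^k$ bounds gives $s_L(f_1,\ldots,f_k)^k \leq 2^{k^2}\prod_j s_L(f_j)$. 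To eliminate the constant, I would apply this bound to the tensor powers $f_j^{\otimes m}$, noting that the iterated tensor multiplicativity $t_{L,\alpha}(f_1^{\otimes m},\ldots,f_k^{\otimes m}) = t_{L,\alpha}(f_1,\ldots,f_k)^m$ yields $s_L(f_1^{\otimes m},\ldots,f_k^{\otimes m}) = s_L(f_1,\ldots,f_k)^m$ and $s_L(f_j^{\otimes m}) = s_L(f_j)^m$. This turns the bound into $s_L(f_1,\ldots,f_k)^{mk} \leq 2^{k^2}\prod_j s_L(f_j)^m$ for every $m$, and letting $m \to \infty$ kills the constant $2^{k^2}$. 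This tensoring trick, carried out verbatim as in the proof of \cite[Lemma~5.5]{LS21}, is the key ingredient.

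Finally, for positive definiteness, suppose $f \ne 0$. Averaging over all decorations $\alpha$ gives
\begin{equation*}
    2^{-k}\sum_\alpha t_{L,\alpha}(f,\ldots,f) = t_L(\Re f),
\end{equation*}
since each factor $f(x_i)^{\alpha(i)}\overline{f(x_i)}^{1-\alpha(i)}$ averages to $\tfrac12(f(x_i)+\overline{f(x_i)}) = \Re f(x_i)$. If $\Re f \neq 0$, the right-hand side is strictly positive by Proposition~\ref{prop:positivity}, so at least one $|t_{L,\alpha}(f,\ldots,f)|$ is positive and hence $s_L(f) > 0$. Otherwise $f = i\,\Im f$ with $\Im f$ a non-zero real function, and every $|t_{L,\alpha}(f,\ldots,f)|$ equals $|t_L(\Im f)|$, which is again positive by Proposition~\ref{prop:positivity}. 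Thus $s_L(f) > 0$ whenever $f \ne 0$, completing the proof that $s_L(\cdot)^{1/k}$ is a norm on $\FF$.
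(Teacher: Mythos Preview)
Your proof is correct and follows essentially the same approach as the paper, which simply defers to \cite[Lemma~5.5]{LS21}: you verify that $s_L$ is a weak decoration functional, establish the H\"older inequality by decomposing into real and imaginary parts, applying the real-valued H\"older inequality from Proposition~\ref{Holder}, and then removing the resulting constant via the tensor-power trick, exactly as in \cite{LS21}. The positive-definiteness argument via averaging over decorations and invoking Proposition~\ref{prop:positivity} is also sound.
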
 

We will also need the following easy fact~\cite[Lemma 2.2]{LS21} saying that $|\tau(\cdot)|^{1/k}$ is a seminorm if and only if $|\tau(\cdot)|$ is convex.

\begin{lemma}\label{th:convexity_gen}
Let $\tau$ be a weak decoration functional on $\FF^k$. Then $|\tau(\cdot)|^{1/k}$ is a seminorm on $\FF$ if and only if $|\tau(\cdot)|$ is convex, i.e., $|\tau(\frac{1}{2}(f+g))| \leq \frac{1}{2}(|\tau(f)|+|\tau(g)|)$ 
for all $f,g\in\FF$. 
\end{lemma}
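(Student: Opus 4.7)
\emph{Overall approach.} My plan is to prove the two directions separately, writing $N(\cdot) := |\tau(\cdot)|^{1/k}$ throughout. The ``only if'' direction will follow from a short computation using the seminorm axioms together with the convexity of $x \mapsto x^k$; the ``if'' direction requires combining iterated midpoint convexity with the homogeneity property \ref{it:scalar}.

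\emph{From seminorm to convexity.} Suppose $N$ is a seminorm. The triangle inequality and absolute homogeneity give $N(\tfrac{1}{2}(f+g)) \leq \tfrac{1}{2}(N(f) + N(g))$. Raising to the $k$-th power and using that $x \mapsto x^k$ is monotone and convex on $[0,\infty)$ for $k \geq 1$, I would obtain
\begin{equation*}
    |\tau(\tfrac{1}{2}(f+g))| \leq \bigl(\tfrac{1}{2}(N(f)+N(g))\bigr)^k \leq \tfrac{1}{2}(N(f)^k + N(g)^k) = \tfrac{1}{2}(|\tau(f)| + |\tau(g)|),
\end{equation*}
which is exactly midpoint convexity of $|\tau|$.

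\emph{From convexity to seminorm.} Assume $|\tau|$ is midpoint convex. Homogeneity of $N$ is immediate from \ref{it:scalar}, so the only thing to establish is the triangle inequality. First, I would iterate midpoint convexity to show that for every dyadic rational $\lambda = p/2^n \in [0,1]$ and all $u, v \in \FF$,
\begin{equation*}
    |\tau(\lambda u + (1-\lambda) v)| \leq \lambda|\tau(u)| + (1-\lambda)|\tau(v)|,
\end{equation*}
by averaging over a sequence containing $p$ copies of $u$ and $2^n - p$ copies of $v$. Given $f, g \in \FF$ and a dyadic $\lambda \in (0,1)$, I would then decompose $f + g = \lambda(f/\lambda) + (1-\lambda)(g/(1-\lambda))$ and invoke the display above with $u = f/\lambda$ and $v = g/(1-\lambda)$, using \ref{it:scalar} to rewrite $|\tau(f/\lambda)| = |\tau(f)|/\lambda^k$ and similarly for $g$. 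This yields
\begin{equation*}
    |\tau(f+g)| \;\leq\; \frac{|\tau(f)|}{\lambda^{k-1}} + \frac{|\tau(g)|}{(1-\lambda)^{k-1}}.
\end{equation*}
Since the right-hand side is continuous in $\lambda$, I would minimize by taking a sequence of dyadic rationals tending to the calculus-optimum $\lambda^* = N(f)/(N(f)+N(g))$; a direct computation then shows that the infimum equals $(N(f)+N(g))^k$, which gives $N(f+g) \leq N(f) + N(g)$. Boundary cases with $N(f) = 0$ or $N(g) = 0$ are handled instead by taking dyadics tending to $0$ or to $1$.

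\emph{Main obstacle.} The only subtle point is that midpoint convexity directly yields convexity at dyadic fractions only. The homogeneity rescaling $u = f/\lambda$, $v = g/(1-\lambda)$ is the key device here: it converts the problem into minimizing a continuous numerical function of the scalar $\lambda$, so density of dyadics in $(0,1)$ suffices to reach the optimum without requiring any topological structure on $\FF$ itself.
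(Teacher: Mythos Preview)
The paper does not supply its own proof of this lemma; it simply quotes it as \cite[Lemma~2.2]{LS21}. Your argument is correct and is the standard one: the forward direction via convexity of $x\mapsto x^k$ on $[0,\infty)$, and the backward direction by iterating midpoint convexity to dyadic coefficients, applying the homogeneity rescaling $f+g=\lambda(f/\lambda)+(1-\lambda)(g/(1-\lambda))$, and optimising the resulting scalar bound $|\tau(f)|/\lambda^{k-1}+|\tau(g)|/(1-\lambda)^{k-1}$ over $\lambda\in(0,1)$ by continuity. The boundary-case handling when $N(f)=0$ or $N(g)=0$ is also fine.
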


Thus, combining~\Cref{cor:s_L_seminorm} with this lemma shows that $s_L(\cdot)$ is a convex functional. 

\begin{corollary}\label{cor:s_L_convex}
    Let $L$ be an $m\times k$ norming system. Then $s_L(\cdot)$ is convex on $\FF$.
\end{corollary}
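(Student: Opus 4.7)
The plan is to derive this corollary directly by composing the two immediately preceding results, namely \Cref{cor:s_L_seminorm} and \Cref{th:convexity_gen}, applied to the weak decoration functional $\tau = s_L$. There is essentially no new work to do; the content is bookkeeping, so I would present it as a one-line argument with the verifications made explicit.

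First, I would note that $s_L$ qualifies as a weak decoration functional on $\FF^k$, a point already recorded in the paragraph preceding \Cref{th:Hatami_gen}. This is the hypothesis needed in order to invoke \Cref{th:convexity_gen}. Next, by \Cref{cor:s_L_seminorm}, $s_L(\cdot)^{1/k}$ is a norm on $\FF$ and, therefore, a seminorm. Applying the ``only if'' direction of \Cref{th:convexity_gen} to $\tau = s_L$ then yields that $|s_L(\cdot)|$ is convex, i.e.,
\begin{equation*}
    \bigl|s_L\bigl(\tfrac{1}{2}(f+g)\bigr)\bigr| \leq \tfrac{1}{2}\bigl(|s_L(f)| + |s_L(g)|\bigr)
\end{equation*}
for all $f,g \in \FF$.

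Finally, I would observe that $s_L$ is non-negative by construction, since it is defined as the maximum of the moduli $|t_{L,\alpha}(\cdot)|$ over the choices of $\alpha : [k] \to \{0,1\}$. Hence $|s_L| = s_L$, and the displayed inequality above reads
\begin{equation*}
    s_L\bigl(\tfrac{1}{2}(f+g)\bigr) \leq \tfrac{1}{2}\bigl(s_L(f) + s_L(g)\bigr),
\end{equation*}
which is the convexity of $s_L$ on $\FF$.

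There is no real obstacle here, as all of the analytic content has been absorbed into \Cref{cor:s_L_seminorm} (whose proof is quoted as being identical to \cite[Lemma 5.5]{LS21}) and \Cref{th:convexity_gen} (a standard convexity-seminorm equivalence). The only point worth flagging is the non-negativity of $s_L$, which allows us to drop the absolute value bars on the outside and pass from ``$|s_L|$ is convex'' to ``$s_L$ is convex''; without this, the conclusion would be weaker.
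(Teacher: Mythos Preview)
Your proposal is correct and follows exactly the paper's own argument: the paper derives the corollary in one line by combining \Cref{cor:s_L_seminorm} with \Cref{th:convexity_gen} applied to the weak decoration functional $s_L$. Your added remark that $s_L\ge 0$ (so $|s_L|=s_L$) is the only detail you make more explicit than the paper does.
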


The final ingredient we need is the following lemma~\cite[Lemma~2.3]{LS21}, which says that the convexity of $s_L(\cdot)$ implies the convexity of \emph{some} $|t_{L,\alpha}(\cdot)|$ under appropriate technical conditions.
To state the conditions, we need some more definitions. A \emph{density functional} is a functional of the form $\tau(f)=\tau(f,f,\ldots,f)$ where $\tau$ is a decoration functional (and, crucially, not a weak decoration functional). A set $\C$ in a vector space $\FF$ is called {\it algebraically open at $f\in\C$} if, for any $g\in\FF$, 
there exists $\varepsilon > 0$ such that $f+xg\in\C$ 
for each $x \in (-\varepsilon,\varepsilon)$.
We simply say that $\C$ is \emph{algebraically open} if it is algebraically open at every $f\in \C$.

\begin{lemma}\label{th:conv_min}
Let $\tau_1,\tau_2,\ldots,\tau_r$ be non-negative real-valued density functionals on $\FF$ that satisfy: 
\begin{enumerate}[(a)]
\item\label{conv_min_1}
$\tau(f) := \max_j \tau_j(f)$  is convex on $\FF$;
\item\label{conv_min_2}
for each $j\in [r]$, there is $f \in \FF$ such that 
$\max_{\ell\neq j} \tau_\ell(f) < \tau_j(f)$;
\item\label{conv_min_4}
for any non-empty algebraically open subset $\C\subseteq\FF$, 
there is $f\in\C$ such that $\tau_j(f) > 0$ 
for all $j\in[r]$. 
\end{enumerate}
Then there exists $j\in\{1,2,\ldots,r\}$ 
such that $\tau_j(\cdot)$ is convex.
\end{lemma}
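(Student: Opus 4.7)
The plan is to exploit two structural features of density functionals: each $\tau_j$ restricted to a real affine line is a polynomial of degree at most $k$ in the line parameter (by real-multilinearity of the underlying decoration functional in each of its $k$ slots), and each $\tau_j$ is multiplicative under tensor products. The idea is to isolate algebraically open regions where a single $\tau_j$ dominates, import convexity there from the maximum $\tau$, and then amplify to global convexity via tensor powers.

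First, for each $j \in [r]$, I would define
\begin{equation*}
    U_j := \{f \in \FF : \tau_j(f) > \tau_\ell(f) \text{ for all } \ell \ne j\}.
\end{equation*}
Since the $\tau_\ell$ are continuous along lines (being polynomials there), each $U_j$ is algebraically open, and by hypothesis (b) it is non-empty; the scalar homogeneity $\tau_j(cf) = |c|^k \tau_j(f)$ moreover makes $U_j$ invariant under nonzero real scaling. Applying hypothesis (c) to the non-empty algebraically open set $U_j$, I can pick a reference point $f_j^\star \in U_j$ with $\tau_\ell(f_j^\star) > 0$ for every $\ell \in [r]$; in particular, the ratios $\tau_j(f_j^\star)/\tau_\ell(f_j^\star)$ exceed $1$ for all $\ell \ne j$.

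The main step is a tensor-power amplification. Given arbitrary $f, g \in \FF$ with $\tau_j(f), \tau_j(g) > 0$, set $h := (f_j^\star)^{\otimes m}$ for large $m$. By multiplicativity, $\tau_\ell(f \otimes h) = \tau_\ell(f)\tau_\ell(f_j^\star)^m$, so the ratio $\tau_j(f \otimes h)/\tau_\ell(f \otimes h)$ grows like $(\tau_j(f_j^\star)/\tau_\ell(f_j^\star))^m$ and exceeds $1$ for $m$ sufficiently large; the same holds for $g \otimes h$, so both tensored points lie in $U_j$, where $\tau = \tau_j$. Since tensor product is linear in each factor, $(f+g)/2 \otimes h = \tfrac{1}{2}(f \otimes h + g \otimes h)$, and midpoint convexity of $\tau$ applied to this pair gives
\begin{equation*}
    \tau_j\!\left(\tfrac{f+g}{2}\right)\tau_j(h) \;=\; \tau_j\!\left(\tfrac{f\otimes h + g\otimes h}{2}\right) \;\le\; \tau\!\left(\tfrac{f\otimes h+g\otimes h}{2}\right) \;\le\; \tfrac{\tau_j(f)+\tau_j(g)}{2}\, \tau_j(h);
\end{equation*}
dividing by $\tau_j(h) > 0$ yields midpoint convexity of $\tau_j$ at $f, g$. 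The main obstacle is the degenerate case where $\tau_j(f) = 0$ or $\tau_j(g) = 0$, as the ratio argument breaks down. My plan to overcome this is a perturbation: replace $f$ and $g$ by $f + \varepsilon f_0$ and $g + \varepsilon f_0$, where $f_0$ is obtained from hypothesis (c) applied to $\FF$ itself so that $\tau_\ell(f_0) > 0$ for all $\ell$, and then send $\varepsilon \to 0$. The polynomial $\varepsilon \mapsto \tau_j(f + \varepsilon f_0)$ has degree $k$ with positive leading coefficient $\tau_j(f_0)$, so it is positive for all sufficiently small $\varepsilon$ outside a finite set, which is enough to apply the non-degenerate case and then pass to the limit by continuity along lines. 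I expect the remaining delicacy to be in ensuring that one can choose a single $\varepsilon > 0$ that makes both $\tau_j(f+\varepsilon f_0)$ and $\tau_j(g+\varepsilon f_0)$ positive while staying within a common polynomial-continuity regime, which should follow from standard polynomial arguments.
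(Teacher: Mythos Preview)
The paper does not include its own proof of this lemma; it is quoted verbatim as \cite[Lemma~2.3]{LS21}. Your tensor-amplification argument is essentially the approach used there, and the main chain of inequalities is correct: with $h=(f_j^\star)^{\otimes m}$ one has
\[
\tau_j\!\left(\tfrac{f+g}{2}\right)\tau_j(h)
\;\le\;\tau\!\left(\tfrac{f\otimes h+g\otimes h}{2}\right)
\;\le\;\tfrac12\bigl(\tau(f\otimes h)+\tau(g\otimes h)\bigr)
\;=\;\tfrac12\bigl(\tau_j(f)+\tau_j(g)\bigr)\tau_j(h),
\]
using only $\tau_j\le\tau$, the convexity of $\tau$, and the fact that $f\otimes h,g\otimes h\in U_j$ for large $m$. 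Your checks that $U_j$ is algebraically open (finitely many polynomial inequalities strict at $t=0$) and that the amplification works even when some $\tau_\ell(f)=0$ (since then $\tau_\ell(f\otimes h)=0<\tau_j(f\otimes h)$) are fine. The degenerate case is handled exactly as you outline: both $t\mapsto\tau_j(f+t f_0)$ and $t\mapsto\tau_j(g+t f_0)$ are degree-$k$ polynomials with leading coefficient $\tau_j(f_0)>0$, hence positive for all but finitely many $t>0$; pick a common small $\varepsilon$, apply the non-degenerate case, and let $\varepsilon\to 0$ using polynomial continuity on lines. Midpoint convexity together with this continuity then upgrades to full convexity.

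One remark worth making explicit: your argument never singles out a particular $j$, so it actually shows that \emph{every} $\tau_j$ is convex under (a)--(c), not merely that some $\tau_j$ is. This is not a gap but a strengthening, and it is the same strengthening one obtains from the Lee--Sidorenko proof.
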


With these results in hand, we are now ready to prove~\Cref{thm:complex}, which states that every norming system is also complex norming under an appropriate conjugation assignment. 

\begin{proof}[Proof of~\Cref{thm:complex}]
    Let us first give a brief sketch. By~\Cref{cor:s_L_convex}, we already know that $s_L(\cdot)$ is convex, which verifies condition~\ref{conv_min_1} of \Cref{th:conv_min} for $\tau=s_L$ and $\tau_1, \dots, \tau_r$ any collection of functionals of the form $|t_{L, \alpha}|$. Therefore, if~\ref{conv_min_2} and~\ref{conv_min_4} hold, there exists $|t_{L,\alpha}(\cdot)|$ that is convex. Finally, \Cref{th:convexity_gen} and the `converse' part of \Cref{th:Hatami_gen} show that $(L,\alpha)$ is norming, with the latter excluding the possibility that $|t_{L,\alpha}(\cdot)|^{1/k}$ defines a seminorm that is not a norm. 

    Thus, it remains to verify the technical conditions~\ref{conv_min_2} and~\ref{conv_min_4} in~\Cref{th:conv_min}.  
    Let $\mathcal{A}$ be a minimal collection of $0/1$-valued functions on $[k]$ such that $s_L(f)=\max_{\alpha\in\mathcal{A}}|t_{L,\alpha}(f)|$. Then~\ref{conv_min_2} immediately follows from minimality once we take $\tau_1,\ldots,\tau_r$ to be the collection of $|t_{L,\alpha}|$ with $\alpha\in\mathcal{A}$. 

    It therefore suffices to verify~\ref{conv_min_4}.
    For $0/1$-valued functions $\alpha_1,\ldots,\alpha_r$ on $[k]$, consider the augmented function $\alpha:[rk]\rightarrow\{0,1\}$ that maps each $j\in [rk]$ to $\alpha_q(i)$ if 
    $j=(q-1)k+i$ for $1\leq i \leq k$. 
    Let $L(r)$ be the $rm\times rk$ system obtained by taking the disjoint union of $r$ copies of the $m\times k$ system $L$, i.e., 
    \begin{align*}
         L(r)= \begin{pmatrix}
            L &   & \cdots &  &   \\
             &   L &  \cdots &  &   \\
            &&\ddots\\
             &  &  \cdots  & L  &  \\
             &  &  \cdots  &  & L \\
        \end{pmatrix}.
    \end{align*}
    Let $g$ be the constant function $1$ 
    and let $f\in\C$ for an algebraically open set $\C\subseteq\FF$. Then there exists $\varepsilon > 0$ 
such that $\C$ contains all functions of the form $f+xg$ 
with $x \in (-\varepsilon,\varepsilon)$. 
The function $P(x) := t_{L(r),\alpha}(f+xg)$ 
is a polynomial of degree $rk$ with complex coefficients, 
since the coefficient $t_{L(r),\alpha}(f+xg)$ 
of $x^{rk}$ is $t_{L(r),\alpha}(g) = 1 \neq 0$. 
Thus,~$P(x)$ has only a finite number of zeros 
on the interval $(-\varepsilon,\varepsilon)$
and, therefore, there exists some $c\in(-\varepsilon,\varepsilon)$ 
and $h=f+cg\in\G$ such that 
$\prod_{i=1}^r t_{L,\alpha_i}(h) = t_{L(r),\alpha}(h) = P(c) \neq 0$.
\end{proof}

\section{Concluding remarks} \label{sec:conc}

Some surprisingly basic questions about the structure of weakly norming systems remain open. For instance, while we have shown that every weakly norming system must include a Schatten vector in its row space, it remains open as to whether every weakly norming system has a basis consisting only of Schatten vectors. Moreover, if this is true, is it the case that there is a basis for the row space consisting of Schatten vectors all of which have the same support size? 

In \Cref{sec:Cayley}, we saw that each (weakly) norming hypergraph gives rise to a (weakly) norming linear system. But is it the case that every weakly norming linear system arises in this way? If so, then the conjecture (see, for instance,~\cite{CL16, LS21}) that every weakly norming hypergraph is associated to a finite reflection group in a specific way would also extend to weakly norming linear systems. Since Fourier techniques are available to us in the latter context, it may be that such statements are then easier to prove. 

\vspace{5mm}

\noindent\textbf{Acknowledgements.} 
DC was supported by NSF Awards DMS-2054452 and DMS-2348859 and 
SC and JL were supported by Samsung STF Grant SSTF-BA2201-02. LV was supported by Trinity College, Cambridge through a Trinity External Research Studentship and by the London Mathematical Society through an Early Career Research Fellowship.
We would like to thank Hamed Hatami for sharing his unpublished joint work with Pooya Hatami and Shachar Lovett. 

\bibliographystyle{plainurl}
\bibliography{references}

\end{document}